\newcommand{\defeq}{\vcentcolon=}
\newcommand{\compconj}[1]{%
  \overline{#1}%
}
\newtheorem{theorem}{Theorem}
\newtheorem{proposition}{Proposition}
\newtheorem{lemma}{Lemma}
\newtheorem{corollary}{Corollary}
\newtheorem{remark}{Remark}
\newtheorem{definition}{Definition}
\newcommand{\N}{\mathbb{N}}
\newcommand{\R}{\mathbb{R}}
\newcommand{\C}{\mathbb{C}}
\newcommand{\E}{\mathbb{E}}
\newcommand{\prob}{\mathbb{P}}
\newcommand{\itr}[1]{{B}}
\newcommand{\bou}[1]{{\partial B}}
\newcommand{\tot}[1]{{\compconj{B}}}
\newcommand{\statespace}{A}
\newcommand{\acc}[1]{{\left\{#1\right\}}}
\newcommand{\RNum}[1]{\uppercase\expandafter{\romannumeral #1\relax}}
\newcommand{\Law}{\text{Law}}
\begin{document}

\title[Stochastic Schr\"odinger-Lohe model]{Stochastic Schrödinger-Lohe model}
\author[Reika Fukuizumi]{Reika Fukuizumi$^{\scriptsize 1}$}
\author[Leo Hahn]{Leo Hahn$^{\scriptsize 2}$} 

\date{\today}

\keywords{Schrödinger-Lohe model, quantum synchronisation, stochastic perturbation} 

\maketitle

\begin{center} \small
$^1$ Research Center for Pure and Applied Mathematics, \\
Graduate School of Information Sciences, Tohoku University,\\
Sendai 980-8579, Japan; \\
\email{fukuizumi@math.is.tohoku.ac.jp}
\end{center}

\begin{center} \small
$^2$ D\'{e}partement de Math\'{e}matiques et Applications, \\
\'{E}cole Normale Sup\'{e}rieure, PSL University, \\
75005 Paris, France; \\
\email{leo.hahn@ens.fr}
\end{center}

\begin{abstract}

The Schrödinger-Lohe model consists of wave functions interacting with each other, according to a system of Schr\"odinger equations with a specific coupling such that all wave functions evolve on the $L^2$  unit ball. This model has been extensively studied over the last decade and it was shown that under suitable assumptions on the initial state, if one waits long enough all the wave functions become arbitrarily close to
each other, which we call a synchronization.
In this paper, we consider a stochastic perturbation of the Schrödinger-Lohe model and show a weak version of synchronization for this perturbed model in the case of two oscillators.

\end{abstract}

\vspace{0.3cm}

\begin{center} \small
2020 \textit{Mathematics Subject Classification.} 60H10, 37H99, 35Q55.
\end{center}

\tableofcontents

\section{Introduction}

Synchronization is the emergence of a collective behavior inside a group of independent agents. Examples include crowds of people clapping at the same time, the collective flashing of some species of fireflies and the simultaneous electric activity of pacemaker cells in our hearts. This kind of phenomena is captured by the Kuramoto model that has been extensively studied. In \cite{lohe09}, Lohe proposed a non-abelian generalization of that model. In \cite{lohe10}, Lohe further developed the ideas of \cite{lohe09} with the goal of using synchronization in a quantum setting as a way to possibly duplicate quantum information. 
\vspace{3mm}

A special case of the model proposed in \cite{lohe10}, the Schrödinger-Lohe model, has been studied recently (see \cite{antonelli17, choi14, choi16, huh18} and the references therein).  
For the emergent dynamics, the analysis of the pairwise correlation functions between wave oscillators is essential, because we can reduce the analysis of the $L^2$ distance between wave oscillators to the analysis of their correlation using $L^2$ norm conservation.  
It seems that the first 
study on the (complete, practical) synchronization was achieved in \cite{choi14} under somewhat restricted initial conditions. This restriction was relaxed by   
\cite{antonelli17} in a natural manner, 
and a more refined complete
synchronization in terms of initial condition was derived in \cite{huh18} with a generalization of the model in term of different couplings. As one of generalizations of the model, the authors in \cite{choi16} considered 
the Schrödinger-Lohe model adding a potential term. 
\vspace{3mm}

In this paper, we investigate the Schrödinger-Lohe model when it is perturbed by a multiplicative noise. The use of a multiplicative noise is natural since it allows us to have conservation of mass in our model. We show that in this setting as well, a weak version of synchronization, in the case of two oscillators. 
\vspace{3mm}

The result in \cite{choi16} mentioned above is of particular interest to us because it tells us that a synchronization occurs in some weak sense provided  
the maximum difference among the values of potentials is small enough compared to the positive coupling strength of wave oscillators. If we interpret the potential terms as a deterministic perturbation of multiplicative type, and the maximum difference of potentials as the strength of noise, we may prove   
the same kind of synchronization concerning the balance between the coupling strength and the noise strength. This will be done indeed by an application of the large deviation principle in this paper.
\vspace{3mm}

We are interested in the large time behavior of the correlation functions as in the deterministic case. In the case of two oscillators, there is only one correlation function and it is possible to study the correlation function using explicit computations; we will consider that case for the remainder of the introduction. It turns out that our complex-valued correlation function satisfies a stochastic differential equation which is degenerate, in the sense that the dimension of the noise is strictly less than the dimension of the space where it lives. We will see in more detail about this stochastic differential equation that if the initial data is in 
the interior of the complex closed unit  ball, the solution stays in the interior, and if the initial data is on the boundary of the ball, the solution stays on the boundary. We can thus restrict the stochastic differential equation either to the interior or the boundary, and we find that 
the diffusion restricted to the boundary is non degenerate and that there exists a unique invariant (thus ergodic) probability measure on the boundary. By this ergodicity of the invariant probability measure, we prove that the modulus of solutions which start in the interior of the ball converges to one almost surely,  and this implies that there is no invariant probability measure in the interior. Since Doeblin's condition holds for the non-degenerate diffusion on the boundary, the law of solution whose initial data is on the boundary  converges to the invariant measure exponentially in the total variation distance, thus it is recurrent.

These results imply a synchronization in the sense of distributions i.e. the $L^2$ distance 
of any two wave oscillators converges to $\delta_0$ in distribution.
\vspace{3mm}

Two important questions regarding the stochastic 
model have not been answered in this work and will be pursued in the future. The first one is a modeling question: What happens for a different choice of noise and which noise makes the most sense from a physical point of view? 
%it should be considered a non degenerate noise. %Other type of noise could be more natural. 
%: either the original Lohe model with a hermitian matrix and finite dimensional %multiplicative white noise spanned to the whole directions, or the infinite %Schr\"odinger-Lohe model with infinite dimensional white noise. 
The second one is on a synchronization result for more than two wave oscillators.  

\section{Preliminaries and Main results}

In this section, we precisely explain the results in this paper.  
We first give some notation. We denote by $L^2(\R^d, \C)$ the Lebesgue space of complex valued, square-integrable functions, and the inner product in the complex Hilbert space $L^2(\mathbb{R}^d, \C)$ is
denoted by, 
\begin{equation*}
\langle \psi, \varphi \rangle = \int_{\R^d} \psi(x)\overline{\varphi}(x) dx, \quad \psi, \varphi \in  L^2(\R^d, \C).
\end{equation*}
The norm in $L^2(\R^d, \C)$ is denoted by $\|\cdot\|$.
We define for an integer $k \ge 0$ the space $H^k(\R^d, \C)$ to be the set of all functions on $\R^d$ whose derivative up to $k$-times exist in the   
weak sense and is in $L^2(\R^d, \C)$. 
%Let $L^\infty(\R^d, \C)$ %be the Lebesgue space of %complex valued essentially %bounded functions and let 
%$\lVert \cdot %\rVert_{L^{\infty}}$ be %the corresponding norm. 
For any metric spaces $X$ and $Y$ let $C(X, Y)$ be the space of continuous functions from $X$ to $Y$. Let $C_b(X, Y)$ be the set of bounded continuous functions, and $B_b(X,Y)$ denote the set of Borel bounded functions with the norm denoted by $\lVert \cdot \rVert_{L^{\infty}}$.

We denote $B = \{ z \in \mathbb{C} : \lvert z \rvert < 1\}$ the complex open unit ball. It follows that $\compconj{B} = \{ z \in \mathbb{C} : \lvert z \rvert \leq 1\}$ is the complex closed unit ball and that $\partial B = \{ z \in \mathbb{C} : \lvert z \rvert = 1\}$ is the complex unit circle.
\vspace{0.1 in}

We now turn to the Lohe model which is a quantum version of Kuramoto model.  
Let $U_j$ and $U_j^{\dagger}$ be a unitary $d \times d$ matrix and its Hermitian conjugate, respectively, and let $H_j$ 
be the $d \times d$ Hermitian matrix whose eigenvalues correspond to the natural frequencies of the
oscillator at node $j$. We denote by $N \in \N$ the number of oscillators. 
%Let $N \in \N$ be given as the number of nodes. 
Then, the Lohe model for quantum synchronization may be written as follows
\begin{equation} \label{eq:Lohe} 
i \frac{d}{dt}{U}_j =H_j U_j +i\frac{\kappa}{N} \sum_{k=1}^N a_{kj} 
(U_k-U_j U_k^{\dagger} U_j), \quad j=1, 2, ..., N.
\end{equation}
The positive constant $\kappa$ describes the attractive coupling strength, and $\{a_{kj}\}_{k,j}$ is the connectivity matrix. 
If we write $U_j=e^{-i\theta_j}$ and $H_j=\Omega_j$ in the case of $d=1$, the following Kuramoto model is derived.
\begin{equation} \label{eq:kuramoto}
\frac{d}{dt}
\theta_j=\Omega_j + \frac{\kappa}{N} \sum_{k=1}^N a_{kj} \sin(\theta_k-\theta_j), \quad j=1, 2, ..., N.
\end{equation}

In the model (\ref{eq:Lohe}),  any column of $U_j$ may be regarded as the $d$-component complex state vector 
$|\psi_j\rangle$ as a quantum oscillator, and 
(\ref{eq:Lohe}) may be generalized, inserting the Plank constant $\hbar$, to the Schr\"{o}dinger representation: 
\begin{equation} \label{eq:Lohe2}
i\hbar \frac{d}{dt} | \psi_j \rangle  = H_j | \psi_j \rangle  +\frac{i \kappa_{\hbar} }{N} 
\sum_{k=1}^N a_{kj} \left[ |\psi_k \rangle  - |\psi_j \rangle  \langle \psi_k | \psi_j  \rangle \right], 
\end{equation}
with the coupling constant $\kappa_\hbar$. 
    
As discussed in Section 6 of \cite{lohe10}, the equations (\ref{eq:Lohe2}) can be applied to 
infinite-dimensional systems. In the coordinate representations, (\ref{eq:Lohe2}) become the following system 
of nonlinear coupled partial differential equations:  
\begin{equation}  \label{eq:Lohe3}
i\partial_t\psi_j = H_j \psi_j +\frac{i \kappa_{\hbar}}{N} 
\sum_{k=1}^N a_{kj} \left(\psi_k -{\langle \psi_j, \psi_k \rangle} \psi_j \right), 
\end{equation}
where $\psi_j(t,x)$ is the coordinate representation of $|\psi_j \rangle$. One example of the Hamiltonian $H_j$ 
is $H_j = \frac{p_j^2}{2m_j}$, where $m_j$ is the mass of the oscillator at the node $j$, and $p_j$ is the corresponding 
momentum operator. When the nodes are coupled through a quantum network, the operator may be considered 
acting in a common space for any $j$, thus we may instead consider Hamiltonians $H_j=\frac{p^2}{2m_i}$, where 
$p$ denotes the common momentum operator which is represented as 
$p \leftrightarrow -i \hbar \nabla$ in the coordinate representation.     
\vspace{3mm}

In this paper, setting $a_{k,j}=1$ for all $k,j$ for simplicity, we consider the following dimensionless form of (\ref{eq:Lohe3}) as in \cite{choi14}, 
what we call Schr\"odinger-Lohe model. 
More exactly, in the Schr\"odinger-Lohe model, we consider oscillators that are modeled by their wave functions $\psi_1, \ldots, \psi_N$. The wave functions are coupled by the system of equations:
\begin{equation} \label{eq:SL_N}
i\partial_t\psi_j = - \Delta \psi_j +\frac{iK}{N} 
\sum_{k=1}^N \left(\psi_k -{\langle \psi_j, \psi_k \rangle} \psi_j \right), 
\end{equation}
where the initial wave functions, which we will denote by 
$\psi_{j,0}$ for $j=1, 2,..., N$, are normalized, i.e.,
$$
\|\psi_{j,0}\|= 1, \quad j=1, 2, ..., N.  
$$
Here, for the sake of simplicity we have written the coupling constant by $K$ instead of $\kappa_\hbar$.  
\vspace{3mm}

As far as we know, the first synchronization result for  (\ref{eq:SL_N}) can be found in 
\cite{choi14}. The following proposition, taken from \cite{antonelli17}, which offers a somewhat refined analysis compared to \cite{choi14},  states roughly that the distance between any pair of wave functions decays exponentially in time provided that the initial wave functions are sufficiently close to each other. \\

%\begin{definition}
%For $\Psi = (\psi_1, \ldots, \psi_N) \in L^2(\R^d, \C) \oplus %\cdots \oplus L^2(\R^d, \C)$ define
%$$
%D(\Psi) = \max_{0 \leq k, l \leq N} \lVert \psi_k - \psi_l %\rVert.
%$$
%\end{definition}

%\begin{proposition}

%Let $\Psi = (\psi_1, \ldots, \psi_N)$ be the solution of %\ref{eq:SL_N} with initial data $\Psi_0 = (\psi_{1, 0}, \ldots, %\psi_{N, 0}) \in L^2(\R^d, \C) \oplus \cdots \oplus L^2(\R^d, %\C)$ such that $\lVert \psi_{1, 0}\rVert = \cdots = \psi_{N, 0} %= 1$. \\

%Assume that
%$$
%D(\Psi_0) < \frac{1}{2}
%$$
%then
%$$
%D(\Psi(t)) \leq \frac{D(\Psi_0)}{D(\Psi_0) + (1 - 2 %D(\Psi_0))e^{Kt}}.
%$$
%\end{proposition}

%We introduce here the results in \cite{antonelli17} %(Proposition 2.1, 5.2 and 5.3) on the existence of solution %and an exponential synchronization, among others. 

\begin{proposition} \label{prop:cauchy}
Let $k \geq 0$ be a fixed integer and let $K > 0$ be fixed. \\
Let $(\psi_{1, 0}, ..., \psi_{N, 0}) \in  H^{k}(\mathbb{R}^d, \mathbb{C}) \oplus \cdots \oplus H^{k}(\mathbb{R}^d, \mathbb{C})$ such that
$$ \|\psi_{j,0}\|= 1, \quad j=1, 2, ..., N. $$
The system of partial differential equations (\ref{eq:SL_N}) 
with initial data $(\psi_{1, 0}, ..., \psi_{N, 0})$  
has a unique solution $(\psi_1, \ldots, \psi_N)$ 
in $C([0, +\infty), H^k(\mathbb{R}^d, \mathbb{C}) \oplus \cdots \oplus H^k(\mathbb{R}^d, \mathbb{C}))$.
Furthermore, the $L^2$-norm of $(\psi_1, \ldots, \psi_N)$ is preserved, i.e.,
$$
\|\psi_j(t)\|= 1, \quad j=1, 2, ..., N,
$$
for all $t \geq 0$. 
If 
$$
\sum_{k = 1}^N \mathrm{Re} \langle \psi_{j,0}, \psi_{k,0} \rangle > 0 \text{ for all } j = 1, 2, \ldots, N, 
$$
then there exists $T > 0$ and $C > 0$ such that
$$
\max_{1 \leq j, k \leq N} \lVert \psi_j(t) - \psi_k(t) \rVert \leq C e^{-\frac{K}{2} t}
$$
for all $t \geq T$.
\end{proposition}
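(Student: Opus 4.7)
The plan is to handle the three claims in order: local well-posedness in $H^k$, conservation of $L^2$ norm (giving global existence), and exponential synchronization under the sign condition.

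\textbf{Local well-posedness.} I would rewrite (\ref{eq:SL_N}) in the mild Duhamel form
$$\psi_j(t)=e^{it\Delta}\psi_{j,0}+\frac{K}{N}\int_0^t e^{i(t-s)\Delta}\sum_{k=1}^N\bigl(\psi_k(s)-\langle\psi_j(s),\psi_k(s)\rangle\psi_j(s)\bigr)\,ds.$$
Since $e^{it\Delta}$ is a unitary group on every $H^k(\R^d,\C)$ and the nonlinearity $(\psi_j,\psi_k)\mapsto\langle\psi_j,\psi_k\rangle\psi_j$ is locally Lipschitz from $H^k\oplus H^k$ into $H^k$ (multiplication by the scalar $\langle\psi_j,\psi_k\rangle\in\C$ preserves $H^k$ regularity, with Lipschitz constant polynomial in the radius of the ball), a standard contraction argument on $C([0,\tau];H^k\oplus\cdots\oplus H^k)$ yields a unique local solution together with a blow-up alternative.

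\textbf{Mass conservation and global existence.} Pairing (\ref{eq:SL_N}) with $\psi_j$, using self-adjointness of $-\Delta$ to kill the Laplacian contribution, and taking real parts gives
$$\frac{d}{dt}\|\psi_j\|^2=\frac{2K}{N}\sum_{k=1}^N\mathrm{Re}\langle\psi_k,\psi_j\rangle\bigl(1-\|\psi_j\|^2\bigr),$$
so that $M_j(t)\defeq\|\psi_j(t)\|^2-1$ solves a linear homogeneous ODE in $t$ with $M_j(0)=0$, forcing $M_j\equiv 0$ by uniqueness and hence $\|\psi_j(t)\|=1$ for all $t$. This a priori $L^2$ bound, combined with a Gronwall estimate on the higher Sobolev norms (the coupling reduces to bounded scalars multiplying linear terms once $\|\psi_j\|$ is frozen at $1$), prevents finite-time blow-up and upgrades the local solution to a global one in $C([0,\infty);H^k\oplus\cdots\oplus H^k)$.

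\textbf{Synchronization.} Introduce the correlations $h_{jk}(t)\defeq\langle\psi_j(t),\psi_k(t)\rangle$. A short computation using self-adjointness of $\Delta$ together with $\|\psi_j\|=1$ gives the closed autonomous system
$$\frac{d}{dt}h_{jk}=\frac{K}{N}(1-h_{jk})\sum_{l=1}^N(h_{jl}+h_{lk}).$$
Decomposing $h_{jk}=r_{jk}+is_{jk}$ and introducing $D_{jk}\defeq\|\psi_j-\psi_k\|^2=2(1-r_{jk})$ and $R_j\defeq\sum_l r_{jl}$, the real part (using $r_{jj}=1$, $s_{jj}=0$) yields
$$\frac{d}{dt}D_{jk}=-\frac{K}{N}(R_j+R_k)\,D_{jk}-\frac{2K}{N}\,s_{jk}\sum_{l=1}^N(s_{jl}-s_{kl}).$$
The Cauchy--Schwarz bound $r_{jk}^2+s_{jk}^2\leq 1$ controls the indefinite cross term by the main dissipative one, provided the $R_j$ stay positive. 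The main obstacle lies exactly here: one must show by a bootstrap that the open condition $R_j(0)>0$ for every $j$ propagates to a uniform lower bound $\min_j R_j(t)\geq\alpha>0$ on $[T,\infty)$, despite the $R_j$ satisfying a coupled nonlinear ODE system (heuristically one expects $R_j(t)\to N$). Once this lower bound is in hand, Gronwall's inequality applied to $D_{jk}$ delivers the exponential decay at rate $K/2$ for $t\geq T$, uniformly in $j,k$, which is the claimed synchronization.
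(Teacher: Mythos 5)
The paper does not actually prove Proposition \ref{prop:cauchy}; it is imported from \cite{antonelli17}. So I can only judge your proposal on its own terms. The first two parts are fine: the contraction argument in the mild formulation, the identity $\frac{d}{dt}\|\psi_j\|^2=\frac{2K}{N}\sum_k\mathrm{Re}\langle\psi_k,\psi_j\rangle(1-\|\psi_j\|^2)$, and the resulting global existence are all correct and match what the paper does for the stochastic analogue (modulo the usual caveat that for $k=0$ the pairing with $\Delta\psi_j$ must be justified by approximating with $H^2$ data, as the paper itself does in Section 3). Your derivation of the closed system $\frac{d}{dt}h_{jk}=\frac{K}{N}(1-h_{jk})\sum_l(h_{jl}+h_{lk})$ is also correct.

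The gap is in the synchronization step, and it is not a technicality — it is the entire content of the last assertion. You explicitly defer the bootstrap showing that $\min_j R_j(t)$ stays bounded below (indeed one needs $R_j(t)\to N$, not merely $R_j\ge\alpha>0$, to reach the rate $K/2$: a lower bound $\alpha$ only yields decay of $D_{jk}$ at rate $2K\alpha/N$ before cross terms). Worse, the claim that Cauchy--Schwarz controls the cross term does not close. From $r_{jl}^2+s_{jl}^2\le 1$ one only gets $\lvert s_{jl}\rvert\le\sqrt{(1-r_{jl})(1+r_{jl})}\le\sqrt{D_{jl}}$, so the cross term $\frac{2K}{N}\lvert s_{jk}\rvert\sum_l\lvert s_{jl}-s_{kl}\rvert$ is of order $K\max_{j,k}D_{jk}$ — the \emph{same} order as the dissipative term $\frac{K}{N}(R_j+R_k)D_{jk}\le 2KD_{jk}$, with a constant that is not visibly smaller. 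So the differential inequality for $D_{jk}$ (or for $\max_{j,k}D_{jk}$) does not sign-close by this estimate alone, even in a neighborhood of the synchronized state. This is precisely why \cite{choi14,antonelli17,huh18} devote their main effort to this point, working with a carefully chosen Lyapunov functional (e.g. the diameter $\max_{j,k}\|\psi_j-\psi_k\|$ together with monotonicity of the $R_j$, or the two-oscillator reduction exploited in this paper for $N=2$, where the scalar Riccati equation $\dot h=K(1-h^2)$ can be solved explicitly). As written, your argument establishes well-posedness and conservation but not the exponential synchronization estimate.
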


%\begin{theorem}

%Assume that the initial wave functions $\psi_{1, 0}, \ldots, %\psi_{N, 0}$ satisfy
%$$
%\psi_{k, 0} \neq \psi_{l, 0} \text{ for } k \neq l \text{ and } %\rho_0 \vcentcolon= \lVert \frac{1}{N} \sum_{k = 1}^n \psi_{k, %0} \rVert > 0 
%$$
%and let $\psi_k = \psi_k(x, t)$ be a global smooth solution to %\ref{eq:SL_N}. Then exactly one of the following statements is %true
%\begin{itemize}
%\item \textbf{Complete synchronization.} We have
%$$
%\lim_{t \rightarrow +\infty} \langle \psi_k, \psi_l \rangle = 1 %\text{ for all } 1 \leq k, l \leq n.
%$$
%\item \textbf{Bi-polar synchronization.} There exists an $1 \leq %l_0 \leq n$ such that
%$$
%\lim_{t \rightarrow +\infty} \langle \psi_k, \psi_l \rangle = 1 %\text{ for } k, l \neq l_0 \text{ and } \lim_{t \rightarrow %+\infty} \langle \psi_{l_0}, \psi_k \rangle \text{ for } k \neq %l_0.
%$$
%\end{itemize}
%\end{theorem}

%\begin{theorem}{Practical synchronization}

%Let $V_1, \ldots, V_N \in L^\infty(\R^d \times \R_+, \R)$. \\

%Let $\psi = (\psi_1, \ldots, \psi_N)$ be the solution of the %system of partial differential equations
%$$
%i\partial_t\psi_j = - \Delta \psi_j + V_j \psi_j + \frac{iK}{N} 
%\sum_{k=1}^N \left(\psi_k -{\langle \psi_j, \psi_k %\rangle}\right) \psi_j, 
%$$
%for initial conditions $\psi_{j, 0} \in L^2(\R^d, \C)$ such that %$\lVert \psi_{j, 0} \rVert = 1$. \\

%Assume $\max_{0 \leq k, l \leq N} \lVert \psi_k - \psi_l \rVert %< \frac{1}{3}$ then
%$$
%\lim_{K \rightarrow +\infty} \limsup_{t \rightarrow +\infty} %\max_{0 \leq k, l \leq N} \lVert \psi_k - \psi_l \rVert = 0.
%$$
%\end{theorem}

\vspace{3mm}

Effects by stochastic perturbations in the Kuramoto model 
have been a target of general interest and studied 
in, for ex., \cite{gp,k,tt}. The equation (\ref{eq:kuramoto}) with the simplest stochastic perturbation, is the one with an additive 
white noise, where the sensibility function is uniformly constant, $\varepsilon$: 
\begin{equation} \label{eq:stochastic_lohe_matrix_model}
\dot{\theta}_j=\Omega_j + \frac{K}{N} \sum_{k=1}^N \sin(\theta_k-\theta_j)+\varepsilon \xi_j(t), \quad j=1, 2, ..., N,
\end{equation}
where $\xi_j(t)$ is a Gaussian noise satisfying 
$$\langle \xi_j \rangle =0, \quad \langle \xi_j(t), \xi_k (s)\rangle =\delta_0(t-s) \delta_{jk}, \quad t \ge s\ge 0, ~j,k=1, 2,... ,N.$$  
Here, $\delta_0$ is the Dirac measure at the origin, and $\delta_{jk}$ is the Kronecker delta, i.e.
$\delta_{jk}=1$ if $j=k$ and $\delta_{jk}=0$ if $j \ne k$.  

If we again operate the same transform as above, i.e., 
$U_j=e^{-i\theta_j}$ and $H_j=\Omega_j$, we obtain 
\begin{equation*}
i \dot{U}_j =H_j U_j +i\frac{K}{N} \sum_{k=1}^N 
(U_k-U_j U_k^{\dagger} U_j)+\varepsilon U_j \xi_j(t), \quad j=1, 2, ..., N,  
\end{equation*}
which would be a natural extension as the Lohe model with a stochastic perturbation. 
\vspace{0.1 in}

In this paper, as we have already mentioned, we study influences 
of a noise on the synchronization for the Schr\"{o}dinger-Lohe model. 
As the first step, we consider a Stratonovich  multiplicative white noise in time, since the noise here is considered as the limit of processes with nonzero correlation length, and to have conservation of mass like in the deterministic case. Conservation of mass is a reasonable property to ask for from a physical perspective because the $\psi_k$ are wave functions so the square of their modulus can be interpreted as a probability density.
\vspace{3mm}

After having completed this work, we were told about the existence of the paper \cite{kim20}, where 
starting from an equation similar to (\ref{eq:stochastic_lohe_matrix_model}), 
the authors derive a matrix Lohe model under a stochastic perturbation and study its mean-field limit. 
We are interested in a different setting, namely infinite-dimensional function spaces and not matrices, and we will take no mean-field limit.
\vspace{3mm}

We now introduce the system of equations that we will study in the remainder of this paper. Let $(\Omega, \mathcal{F}, \prob)$ be a probability space endowed with a standard complete
filtration $\{\mathcal{F}_t\}_{t\ge 0}$. Let  
$\{\beta_j\}_{1 \le j \le N}$ be a family of independent one dimensional Brownian motions associated to $\{\mathcal{F}_t\}_{t\ge 0}$. We set 
$\xi_j(t)=\frac{d\beta_j(t)}{dt}$ for $j=1,2,..., N$, and we consider the following stochastic Schr\"odinger equation: 

\begin{equation} \label{eq:SSL_N} 
d \psi_j = i \Delta \psi_j dt + \frac{K}{N} \left( \sum_{k = 1}^N \psi_k - \langle \psi_j, \psi_k \rangle \psi_j \right) dt + i \varepsilon \psi_j \circ d\beta_j, \quad j=1, 2, ..., N, 
\end{equation}
where $K, \varepsilon > 0$, and $\circ$ denotes the Stratonovich product.  The first result of this paper is the existence of a solution of  (\ref{eq:SSL_N}): 
\begin{theorem} \label{thm:cauchy}
Let $K > 0$ and $\varepsilon > 0$. Let $\psi_{j,0} 
\in L^2(\R^d, \mathbb{C})$ satisfying $\|\psi_{j,0}\|=1,$ for $j=1, 2, ..., N.$
Then there exists a unique, $\{\mathcal{F}_t\}_{t\ge 0}$-adapted solution of the system (\ref{eq:SSL_N}), 
$(\psi_1, \ldots, \psi_N) \in C([0, \infty), L^2(\R^d, \mathbb{C}) \oplus \cdots \oplus L^2(\R^d, \mathbb{C})
)$ a.s. 
with $\psi_j(0)=\psi_{j,0}$. 
Moreover, the $L^2$ norm is conserved, i.e., 
$$\|\psi_j(t)\|=1, \quad \mbox{for all}~ t \ge 0,~ \prob \mbox{-a.s.},~ j=1, 2, ..., N. $$
\end{theorem}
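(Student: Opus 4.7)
The strategy is to remove the Stratonovich noise by a pathwise change of phase, reducing the stochastic system to a family of deterministic Cauchy problems indexed by $\omega$. Setting $\phi_j(t) = e^{-i\varepsilon\beta_j(t)}\psi_j(t)$ and applying the usual chain rule (which is valid for the Stratonovich integral), one sees that $\psi_j = e^{i\varepsilon\beta_j}\phi_j$ solves (\ref{eq:SSL_N}) if and only if $\phi_j$ satisfies the random equation
\[
\partial_t\phi_j = i\Delta\phi_j + \frac{K}{N}\sum_{k=1}^N\bigl(e^{i\varepsilon(\beta_k-\beta_j)}\phi_k - e^{i\varepsilon(\beta_j-\beta_k)}\langle\phi_j,\phi_k\rangle\phi_j\bigr), \qquad \phi_j(0) = \psi_{j,0},
\]
which, for each fixed $\omega$, has continuous-in-time bounded phase coefficients and no remaining stochastic integrals. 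Since $i\Delta$ generates a unitary group $S(t) = e^{it\Delta}$ on $L^2(\R^d, \C)$, the mild formulation is natural.

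For every fixed $\omega$ I would then solve this random PDE in mild form,
\[
\phi_j(t) = S(t)\psi_{j,0} + \int_0^t S(t-s)\, G_j(\phi(s), s, \omega)\, ds,
\]
where $G_j$ denotes the cubic coupling term above with its random phase factors. As a polynomial of degree three in $(\phi_1, \dots, \phi_N)$ modulated by coefficients of modulus one, $G_j$ is locally Lipschitz from $(L^2)^N$ into $L^2$, with Lipschitz constant controlled by the $L^2$ norms of its arguments. A Banach fixed point argument in $C([0,\tau], (L^2)^N)$ then produces a unique mild solution on a maximal interval $[0, T^*(\omega))$; adaptedness with respect to $\{\mathcal{F}_t\}_{t\ge 0}$ is inherited from the pathwise construction and the continuity of $t \mapsto \beta_j(t)$.

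The crucial step is the a priori bound $\|\phi_j(t)\| = 1$, which both rules out blow-up in finite time and yields the norm conservation claimed in the theorem. Using $\langle\phi_k,\phi_j\rangle = \overline{\langle\phi_j,\phi_k\rangle}$ one verifies the algebraic identity
\[
2\,\mathrm{Re}\,\langle G_j(\phi), \phi_j\rangle = \frac{2K}{N}\sum_{k=1}^N \mathrm{Re}\bigl(e^{i\varepsilon(\beta_j-\beta_k)}\langle\phi_j,\phi_k\rangle\bigr)\bigl(1 - \|\phi_j\|^2\bigr),
\]
which vanishes on the set $\{\|\phi_j\| = 1,\ j = 1, \dots, N\}$, making this set formally invariant for the ODE in $L^2$. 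Since a mild solution is not a priori differentiable in time, I would first take regularized initial data $\psi_{j,0}^{(n)} \in H^2(\R^d, \C)$ with unit $L^2$ norm, for which the mild solution is a strong one and $\frac{d}{dt}\|\phi_j^{(n)}(t)\|^2$ can be computed classically from the identity above to conclude $\|\phi_j^{(n)}(t)\| \equiv 1$. Passing to the limit $n \to \infty$ via the $L^2$-Lipschitz dependence on initial data provided by the fixed point argument transfers the conservation to arbitrary $L^2$ initial data. With $\|\phi_j(t)\|$ uniformly bounded, the local Lipschitz constant is controlled and the maximal solution extends to $[0, \infty)$; undoing the change of variables produces the desired $\{\mathcal{F}_t\}$-adapted global solution of (\ref{eq:SSL_N}) with conserved $L^2$ norm.

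I expect the main obstacle to be precisely this rigorous $L^2$-level norm conservation: combining regularization in $H^2$, strong-solution differentiation with the Stratonovich chain rule, and continuity of the solution map in $L^2$ requires some care. The pathwise removal of the noise and the local contraction mapping argument afterwards are comparatively routine.
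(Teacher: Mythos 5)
Your proposal is correct and follows essentially the same route as the paper: the gauge transformation $\phi_j = e^{-i\varepsilon\beta_j}\psi_j$ reducing (\ref{eq:SSL_N}) to the random PDE (\ref{eq:DSL_N}), a pathwise Banach fixed point argument in the mild ($L^2$, unitary group) formulation, and $L^2$-norm conservation obtained by regularizing the initial data in $H^2$, computing $\partial_t\|\phi_j\|^2$ for strong solutions via the same algebraic identity, and passing to the limit by continuous dependence on the data. The paper's proof matches yours step for step, including the deduction of global existence from the conserved norm and the adaptedness from the pathwise fixed-point construction.
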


A proof of Theorem 1 will be given in the next section for the sake of completeness, but the method fully follows \cite{dbf}, namely, by the use of gauge transform:
$$ \phi_j(t)=e^{-i\varepsilon \beta_j(t)} \psi_j(t), \quad j=1, 2, ..., N, $$
the equation ($\ref{eq:SSL_N}$) comes down to the following random partial differential equation: 
\begin{equation} \label{eq:DSL_N}
\partial_t \phi_j = i \Delta \phi_j +\frac{K}{N} 
\sum_{k=1}^N 
\left(e^{-i\varepsilon(\beta_j-\beta_k)}\phi_k -e^{i\varepsilon(\beta_j-\beta_k)} \langle \phi_j, \phi_k\rangle \phi_j \right), \quad j=1, 2, ..., N, 
\end{equation} 
which may be solved using the classical deterministic arguments pathwise. 
\vspace{3mm} 
 
We wish to show synchronization in some sense for this stochastic model. 
In this paper we consider the case $N = 2$ where there are only two wave functions. 
In the case of $N=2$ for the deterministic model (\ref{eq:SL_N}),  
Propositions 5.2 and 5.3 of \cite{antonelli17} say that  

\begin{proposition}
Let $(\psi_1, \psi_2) \in C([0,\infty), L^2(\R^d, \C) \oplus 
L^2(\R^d, \C))$ be the solution of $(\ref{eq:SL_N})$ with initial data $(\psi_{1,0}, \psi_{2,0}) 
\in L^2(\R^d, \C) \oplus 
L^2(\R^d, \C)$ such that
$\| \psi_{1, 0} \|= \| \psi_{2, 0} \|= 1$ obtained in Proposition \ref{prop:cauchy}. 
If
$$
\langle \psi_{1,0}, \psi_{2,0} \rangle \neq -1,
$$
then there exist $T > 0$ and $C > 0$ such that
$$
\| \psi_1(t) - \psi_2(t) \| \leq Ce^{-\frac{K}{2} t}
$$
for all $t \geq T$.
\end{proposition}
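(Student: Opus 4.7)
The plan is to reduce the whole $N=2$ system to a single scalar ODE for the correlation function
\[
h(t) \;\defeq\; \langle \psi_1(t), \psi_2(t) \rangle,
\]
exploit $L^2$-norm conservation from Proposition \ref{prop:cauchy} to get a closed equation, and then solve this ODE explicitly. The point is that $\|\psi_1(t)-\psi_2(t)\|^2 = 2 - 2\operatorname{Re} h(t) \le 2\,|1-h(t)|$, so exponential decay of the right-hand side gives exponential decay of the $L^2$ distance.

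First, from (\ref{eq:SL_N}) with $N=2$ and $\|\psi_j\|=1$, I obtain
\[
\partial_t \psi_1 = i\Delta \psi_1 + \tfrac{K}{2}\bigl(\psi_2 - h\,\psi_1\bigr),\qquad
\partial_t \psi_2 = i\Delta \psi_2 + \tfrac{K}{2}\bigl(\psi_1 - \overline{h}\,\psi_2\bigr).
\]
Differentiating $h(t)=\int \psi_1 \overline{\psi_2}\,dx$, using integration by parts (the two Schrödinger contributions $i\int(\Delta\psi_1)\overline{\psi_2}$ and $-i\int\psi_1\Delta\overline{\psi_2}$ cancel by self-adjointness of $-\Delta$) and using $\|\psi_j\|^2=1$, I get the Riccati equation
\[
\frac{dh}{dt} \;=\; K\bigl(1 - h^2\bigr).
\]
The justification of this computation when $\psi_{j,0}$ is merely $L^2$ is the one mildly delicate technical point; I would handle it by an approximation argument (smoothing the initial data in $H^2$, deriving the ODE there, then passing to the limit using continuity of the solution map in $L^2$ given by Proposition \ref{prop:cauchy}).

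Next I solve the Riccati ODE explicitly. Write $h_0 = \langle \psi_{1,0}, \psi_{2,0}\rangle$. If $h_0=1$, equality in Cauchy--Schwarz forces $\psi_{1,0}=\psi_{2,0}$, so by uniqueness $\psi_1\equiv\psi_2$ and the bound is trivial. Otherwise set $\alpha \defeq (1+h_0)/(1-h_0)$; separation of variables gives
\[
h(t) = \frac{\alpha\, e^{2Kt}-1}{\alpha\, e^{2Kt}+1},
\qquad\text{hence}\qquad
1 - h(t) = \frac{2}{\alpha\, e^{2Kt}+1}.
\]
The hypothesis $h_0 \neq -1$ is exactly $\alpha \neq 0$, so for all sufficiently large $t$ (depending on $\alpha$, hence $T$ and $C$) we have $|\alpha e^{2Kt}+1|\ge \tfrac{1}{2}|\alpha|e^{2Kt}$, giving $|1-h(t)|\le C'\, e^{-2Kt}$.

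Finally, using $L^2$-conservation one more time,
\[
\|\psi_1(t)-\psi_2(t)\|^2 = 2 - 2\operatorname{Re} h(t) \le 2\,|1-h(t)| \le 2C'\, e^{-2Kt},
\]
so $\|\psi_1(t)-\psi_2(t)\|\le C\, e^{-Kt}\le C\, e^{-\frac{K}{2}t}$ for $t\ge T$, which is the claimed bound. The main (and only) obstacle is rigorously closing the ODE for $h$ at the level of $L^2$-regular solutions; everything else is the explicit integration of a scalar Riccati equation and the reading off of the exponential rate from the denominator $\alpha e^{2Kt}+1$, with the role of the assumption $h_0\neq -1$ being precisely to prevent this denominator from being identically zero.
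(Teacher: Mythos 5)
Your proof is correct and follows essentially the same route as the paper: reduce to the Riccati equation $h' = K(1-h^2)$ for the correlation function using $L^2$-norm conservation, solve it explicitly, and read off the exponential decay of $\|\psi_1-\psi_2\|^2 = 2-2\,\mathrm{Re}\,h \le 2|1-h|$. (Your explicit solution with $e^{2Kt}$ is in fact the correct integration of the ODE --- the paper's displayed formula with $e^{Kt}$ appears to be a typo --- and since $\mathrm{Re}\,\frac{1+h_0}{1-h_0}\ge 0$ whenever $|h_0|\le 1$, the denominator $\alpha e^{2Kt}+1$ has real part at least $1$ and never vanishes, so your formula is valid for all $t\ge 0$.)
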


To obtain this synchronization result, the key quantity is the correlation function $h(t) = \langle \psi_1(t), \psi_2(t) \rangle$, since, 
by the $L^2$-norm conservation, 
\begin{eqnarray*}
\| \psi_1(t) - \psi_2(t) \|^2 
&=& \| \psi_1(t) \|^2 - 2 \mathrm{Re} \langle \psi_1(t), \psi_2(t) \rangle + \| \psi_2(t) \| ^2 \\
&=& 2 - 2 \mathrm{Re} \langle \psi_1(t), \psi_2(t) \rangle.
\end{eqnarray*}
It follows from the equations for $\psi_{j}(t)$ ($j=1,2$) that the correlation function $h(t)$ verifies
$$
\frac{d}{dt} h = K(1 - h^2),
$$
with $|h(0)| \le 1$. 
It may be seen that this ordinary differential equation has two stationary points $\pm 1$; $h=-1$ is unstable, $h=1$ is stable. More precisely, we can solve this ODE for $h$ to obtain, for all $t \ge 0$, 
$$
h(t) = 1 - \frac{1}{\left( \frac{1}{1 - h(0)} - \frac{1}{2}\right)e^{Kt} + \frac{1}{2}}
$$
or equivalently
$$
\langle \psi_1(t), \psi_2(t) \rangle = 1 - \frac{1}{\left( \frac{1}{1 - \langle \psi_{1, 0}, \psi_{2, 0} \rangle} - \frac{1}{2}\right)e^{Kt} + \frac{1}{2}},
$$
from which the synchronization result follows.
\vspace{3mm}

%When $K, \varepsilon > 0$ are fixed, we are able to show that %$\lVert \psi_1(t) - \psi_2(t) \rVert$ converges in distribution to %a certain probability distribution $\nu_{K, \varepsilon}$ when $t %\rightarrow +\infty$. 

Thus, our main interest in this paper is in the behaviour of   
$h(t) = \langle \psi_1(t), \psi_2(t) \rangle$, where 
$(\psi_1(t), \psi_2(t))$ is the solution of the system 
(\ref{eq:SSL_N}) with $N=2$. It turns out that 
the stochastic process $(h(t))_{t \geq 0}$ 
satisfies the following stochastic differential equation:
\begin{equation} \label{eq:h}
d h = K(1 - h^2) dt + \sqrt{2} i \varepsilon h \circ dW,
\end{equation}
where $W = \frac{1}{\sqrt{2}}(\beta_1 - \beta_2)$ is a one dimensional Brownian motion associated to 
$\{\mathcal{F}_t\}_{t\ge 0}$.
\vspace{3mm}

This stochastic differential equation (\ref{eq:h}) is 
degenerate in the sense that $h(t)$ is two dimensional (complex valued) but the noise $dW$ 
is one dimensional. Note that we are interested only in the initial data $h(0)$ 
such that $|h(0)| \le 1$ because of the normalized condition $\|\psi_{1,0}\|=\|\psi_{2,0}\|=1$. 
We will see that in the case of $|h(0)| \le 1$, the state space for 
the stochastic process $(h(t))_{t\ge 0}$ is $\compconj{B}$ 
and moreover that there are two invariant sets, $B$ and $\partial B$: if $|h(0)|<1$, then $|h(t)|<1$ for all $t>0$, and if $|h(0)|=1$, then $|h(t)|=1$ for all $t>0$.  
\vspace{3mm}

In order to know the time asymptotic behaviour of $h(t)$, 
it is natural to raise up questions on the existence of an invariant measure, the uniqueness of invariant measures, and 
the convergence to the invariant measure. 
\vspace{3mm}

Let $(P_t^\statespace)_{t \geq 0}$ be the Markov semigroup associated with (\ref{eq:h}), namely, 
$$
P_t^{\statespace} f(x) = \mathbb{E}(f(h(t))), \quad t\ge 0
$$
for $f \in B_b(\statespace,\R)$. We will take $A$ among the state spaces $B, \partial B, \overline{B}$ depending on the situation. $\delta_x P_t^{\statespace}$ 
means the law of solution of (\ref{eq:h}) starting from $x \in A$ (for the definition, see Section 4.1).

We remark that the diffusion on $\partial B$ defined by Eq.(\ref{eq:h}) is non-degenerate, thus many well-known analysis are available, and since $\partial B$ is compact, there exists a unique invariant measure denoted by $\mu_{K, \varepsilon}$.  
On the other hand, the diffusion on $B$ is degenerate.

Since $\mu_{K, \varepsilon}$ is the unique invariant measure, 
it is ergodic. Using this ergodicity, 
we have the following convergence result  
for the solution $h(t)$ starting 
at the interior of the unit ball $B$.  

\begin{theorem} \label{thm:conv_interieur}
The solution $h(t)$ of 
the stochastic differential equation (\ref{eq:h})
with $h(0) \sim \delta_x$ for a fixed $x \in B$ satisfies 
$$\lim_{t \rightarrow +\infty} \lvert h(t) \rvert = 1$$
almost surely.
\end{theorem}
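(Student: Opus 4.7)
The plan is to reduce the claim to the divergence of the pathwise integral $\int_0^\infty \mathrm{Re}(h(s))\,ds$ and then exploit the explicit ergodicity on $\partial B$.

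First I apply the Stratonovich chain rule to $|h|^2 = h\overline{h}$. Because the noise $i\varepsilon h\circ dW$ is purely rotational, its contributions to $|h|^2$ cancel, yielding the finite-variation identity
$$d|h|^2 = 2K\,\mathrm{Re}(h)\,(1-|h|^2)\,dt.$$
This integrates to the closed form
$$1-|h(t)|^2 = (1-|h(0)|^2)\exp\Bigl(-2K\!\int_0^t\mathrm{Re}(h(s))\,ds\Bigr),$$
so, since $h(t) \in B$ for all $t$ (as discussed earlier in the paper), the conclusion $|h(t)|\to 1$ is equivalent to $\int_0^\infty \mathrm{Re}(h(s))\,ds = +\infty$ almost surely.

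Next I write $h = re^{i\theta}$, decoupling the system (for $r > 0$) as
$$dr = K(1-r^2)\cos\theta\,dt, \qquad d\theta = -K(r+r^{-1})\sin\theta\,dt + \sqrt{2}\,\varepsilon\,dW.$$
On $\partial B$ the angular equation reduces to the non-degenerate Langevin SDE $d\tilde\theta = -2K\sin\tilde\theta\,dt + \sqrt{2}\varepsilon\,dW$, whose invariant measure is explicitly $\mu_{K,\varepsilon}(d\theta) \propto e^{(2K/\varepsilon^2)\cos\theta}\,d\theta$. This measure is concentrated near $\theta = 0$ and, by symmetry, satisfies $c := \int_{S^1} \cos\theta\,d\mu_{K,\varepsilon}(\theta) > 0$. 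Birkhoff's ergodic theorem for the boundary diffusion then yields $\tfrac{1}{t}\int_0^t \cos\tilde\theta(s)\,ds \to c$ almost surely.

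To transfer this to the interior, I would exploit that $r + r^{-1} \geq 2$ on $(0,1]$, so the interior drift on $\theta$ pulls it towards $\theta = 0$ at least as strongly as the boundary drift. Coupling the interior angle $\theta(t)$ with a boundary angle $\tilde\theta(t)$ driven by the same $W$ and matching initial angle, a pathwise comparison on the lifted circle should give $\cos\theta(t) \geq \cos\tilde\theta(t)$ in a suitable averaged sense. To rule out the singularity at $r = 0$, I would use that at $h = 0$ the SDE reduces to the deterministic $dh = K\,dt$ pointing towards $z=1$, so the origin is repelling and a localization argument gives $\liminf_{t\to\infty} r(t) > 0$ almost surely. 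Combining these, $\int_0^t \mathrm{Re}(h(s))\,ds = \int_0^t r(s)\cos\theta(s)\,ds \gtrsim c\,t$ eventually, proving the divergence.

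The hard part is making this transfer rigorous: $(r,\theta)$ is a coupled degenerate Markov system with no invariant measure in the interior, and the random $r$-dependence of the angular drift prevents a direct invocation of one-dimensional ergodic theorems. A cleaner alternative is a Foster--Lyapunov argument with a modified Lyapunov function of the form $V(r,\theta) = -\log(1-r^2) + \phi(r,\theta)$, with $\phi$ bounded and tuned so that the generator $\mathcal{L}V$ has a uniformly positive lower bound on compact subsets of $B$ (using the identity $\mathcal{L}(-\log(1-r^2)) = 2Kr\cos\theta$ together with diffusion-generated $\sin^2\theta$ contributions from $\phi$). Standard martingale arguments then give $V(h(t)) \to +\infty$ almost surely, which is equivalent to $|h(t)| \to 1$.
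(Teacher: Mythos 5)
Your first step coincides exactly with the paper's: the Stratonovich/It\^o computation gives
$$1-\lvert h(t)\rvert^2=(1-\lvert x\rvert^2)\exp\Bigl(-2K\int_0^t \mathrm{Re}\,h(s)\,ds\Bigr),$$
so the theorem is equivalent to $\int_0^\infty \mathrm{Re}\,h(s)\,ds=+\infty$ a.s. You also correctly identify the two ingredients the paper uses afterwards: ergodicity of the boundary diffusion and positivity of $\E_{\mu_{K,\varepsilon}}[\mathrm{Re}\,\cdot]$. But the step that actually transfers the boundary ergodic average to the interior trajectory is missing, and you acknowledge as much; this is precisely where the content of the theorem lies. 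Your proposed angular comparison does not go through as stated: the interior angular drift $-K(r+r^{-1})\sin\theta$ is not globally ordered against the boundary drift $-2K\sin\theta$ as a function on the circle (the inequality flips with the sign of $\sin\theta$), so the standard one-dimensional comparison theorem does not apply, and once the two angle processes separate there is no pointwise inequality $\cos\theta(t)\ge\cos\tilde\theta(t)$ to average. The auxiliary claim $\liminf_{t\to\infty} r(t)>0$ is likewise unproved and is essentially of the same difficulty as the theorem itself (a local repulsion at the origin does not preclude returns arbitrarily close to $0$ over infinite time). The Foster--Lyapunov alternative is also only a sketch: with $V=-\log(1-r^2)+\lambda\cos\theta$ one gets $\mathcal{L}V=2Kr\cos\theta+K\lambda(r+r^{-1})\sin^2\theta-\varepsilon^2\lambda\cos\theta$, which is \emph{negative} near $r=0$, $\theta=0$ for the large $\lambda$ needed to handle $\theta=\pi$, so the advertised uniform lower bound fails and the corrector $\phi$ would have to depend on $r$ in a way you have not constructed.

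The paper's proof closes this gap with a short synchronous-coupling trick that you did not find: let $g(t)$ be the \emph{stationary} solution on $\partial B$ driven by the same Brownian motion $W$, with $g(0)\sim\mu_{K,\varepsilon}$. Then $D_t=h^x(t)-g(t)$ satisfies $dD_t=-KD_t(h^x+g)\,dt+\sqrt2\,i\varepsilon D_t\circ dW$, whence
$$\lvert D_t\rvert^2=\lvert D_0\rvert^2\exp\Bigl(-2K\int_0^t\mathrm{Re}[h^x(s)+g(s)]\,ds\Bigr).$$
The unknown factor $\exp(-2K\int_0^t\mathrm{Re}\,h^x)$ is then replaced, via the very identity you derived, by the \emph{bounded} quantity $(1-\lvert h^x(t)\rvert^2)/(1-\lvert x\rvert^2)$, while Birkhoff's theorem on the compact boundary gives $\int_0^t\mathrm{Re}\,g(s)\,ds\to+\infty$ a.s. Hence $\lvert h^x(t)-g(t)\rvert\to0$ and, since $\lvert g(t)\rvert=1$, the conclusion follows. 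This sidesteps entirely the need for any angular comparison or lower bound on $r$. I would encourage you to rework your argument along these lines.
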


Thanks to this convergence in $B$, we conclude the following Theorem.

\begin{theorem} \label{thm:mesure_inv}
Fix $K > 0$ and $\varepsilon > 0$ and consider the stochastic differential equation (\ref{eq:h}) and associated Markov semigroup $(P_t^\statespace)_{t\ge 0}$.
\begin{itemize}
\item[(i)] On $\partial B$ there exists a unique invariant probability measure $\mu_{K, \varepsilon}^{\partial B}$ 
with respect to $(P_t^{\partial B})_{t\ge 0}$.
\item[(ii)] On $B$ there is no invariant probability measure. 
\item[(iii)] On $\compconj{B}$ 
there exists a unique invariant probability measure $\mu_{K, \varepsilon}^{\compconj{B}}$ 
with respect to $(P_t^{\compconj{B}})_{t\ge 0}$, where
$\mu_{K, \varepsilon}^{\compconj{B}}$ is the natural extension of $\mu_{K, \varepsilon}^{\partial B}$ to $\compconj{B}$.
\end{itemize}
\end{theorem}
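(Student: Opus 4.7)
I will treat the three items in order, using Theorem~\ref{thm:conv_interieur} as the only nontrivial external input.

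\textbf{(i).} The SDE (\ref{eq:h}) restricted to $\partial B$ is a non-degenerate diffusion on a compact one-dimensional manifold: the Stratonovich noise $\sqrt{2}\,i\varepsilon h \circ dW$ is tangent to, and nowhere vanishing on, the circle, while one checks that $\mathrm{Re}(\overline{h}\,K(1-h^2))$ vanishes when $|h|=1$, so the drift is tangent too and $\partial B$ is indeed invariant. Existence of an invariant probability measure then follows from the Krylov--Bogoliubov argument applied to time averages of $\delta_x P_t^{\partial B}$ on the compact space $\partial B$. For uniqueness, ellipticity of the generator on the circle provides the strong Feller property, and irreducibility is immediate because a one-dimensional Brownian driving a non-degenerate rotation reaches any arc in positive time with positive probability; Doob's theorem then delivers a unique $\mu_{K,\varepsilon}^{\partial B}$.

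\textbf{(ii).} Suppose for contradiction that $\mu$ is a $(P_t^B)_{t\ge 0}$-invariant probability measure on $B$. Take $f(x)=|x|$; by Theorem~\ref{thm:conv_interieur} and dominated convergence (using $0\le |h(t)| \le 1$), for every $x\in B$
\begin{equation*}
P_t^B f(x) = \mathbb{E}_x|h(t)| \xrightarrow[t\to\infty]{} 1.
\end{equation*}
Invariance of $\mu$ together with a second application of dominated convergence on $B$ gives
\begin{equation*}
\int_B |x|\,d\mu(x) \;=\; \int_B P_t^B f(x)\,d\mu(x) \;\xrightarrow[t\to\infty]{}\; \mu(B) \;=\; 1,
\end{equation*}
so $\int_B(1-|x|)\,d\mu(x)=0$, which is impossible for a probability measure since $1-|x|>0$ on $B$.

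\textbf{(iii).} The trivial extension of $\mu_{K,\varepsilon}^{\partial B}$ to $\compconj{B}$ (putting zero mass on $B$) is $(P_t^{\compconj{B}})_{t\ge 0}$-invariant because $\partial B$ is preserved by the dynamics. For uniqueness, let $\mu$ be any $(P_t^{\compconj{B}})_{t\ge 0}$-invariant probability on $\compconj{B}$. Both $B$ and $\partial B$ are invariant sets for the diffusion, so $P_t^{\compconj{B}}(x,B)=\mathbf{1}_B(x)$ and the restrictions $\mu\vert_B$ and $\mu\vert_{\partial B}$ are each invariant sub-probability measures for the corresponding restricted semigroup. If $\mu(B)>0$, normalizing $\mu\vert_B/\mu(B)$ would produce an invariant probability on $B$, contradicting (ii); hence $\mu(B)=0$, $\mu$ is supported on $\partial B$, and (i) identifies $\mu=\mu_{K,\varepsilon}^{\partial B}$.

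\textbf{Anticipated obstacle.} Given Theorem~\ref{thm:conv_interieur}, the argument is structural rather than analytic, so the main care needed is bookkeeping between the three state spaces and their semigroups. The genuinely load-bearing point is that $B$ and $\partial B$ are \emph{almost surely} preserved by the dynamics: this is what lets one split an invariant measure on $\compconj{B}$ cleanly into pieces living on $B$ and on $\partial B$ that are separately invariant for the restricted semigroups, and then invoke (i) and (ii) to conclude. Without this strict invariance of the partition $\compconj{B}=B\sqcup\partial B$ the reduction in (iii) would not close.
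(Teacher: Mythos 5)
Your proposal is correct and follows essentially the same route as the paper: non-degeneracy plus Krylov--Bogolyubov and strong Feller/irreducibility on the compact circle for (i), Theorem~\ref{thm:conv_interieur} combined with invariance to rule out mass in $B$ for (ii), and the decomposition of an invariant measure on $\compconj{B}$ along the invariant partition $B \sqcup \partial B$ for (iii). The only cosmetic difference is in (ii), where you test invariance against the function $|x|$ while the paper tests it against indicators of closed balls of radius $r<1$ and takes a countable union; both reduce to the same use of Theorem~\ref{thm:conv_interieur} and dominated convergence.
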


In a slight abuse of notation, we will denote $\mu_{K, \varepsilon}$ the unique invariant measure $\mu_{K, \varepsilon}^{\partial B}$ on $\partial B$ and also its natural extension to $\compconj{B}$.
\vspace{3mm}

Theorem \ref{thm:conv_interieur} is proved  showing that  
the solution of (\ref{eq:h}) starting in $B$ converges to the stationary solution having the law $\mu_{K, \varepsilon}$. Therefore, 

\begin{corollary} \label{cor:conv_weak} 
Fix $K > 0$ and $\varepsilon > 0$. We have, $$
\delta_x P_t^{B} \xrightarrow{t \rightarrow +\infty} \mu_{K, \varepsilon}
$$
in distribution for all $x \in B$. 
\end{corollary}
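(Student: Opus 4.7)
The plan is to deduce Corollary \ref{cor:conv_weak} from a coupling that is the core of the proof of Theorem \ref{thm:conv_interieur}. Specifically, I place on a common probability space the solution $h$ of (\ref{eq:h}) with $h(0) = x \in B$ and a second solution $\tilde{h}$ driven by the same Brownian motion $W$, with initial datum $\tilde{h}(0) \sim \mu_{K,\varepsilon}$ chosen independently of $W$. Then $\tilde{h}$ is a stationary process on $\partial B$, so $\tilde{h}(t) \sim \mu_{K,\varepsilon}$ for every $t \geq 0$, and the text preceding the corollary indicates that Theorem \ref{thm:conv_interieur} is proved precisely by showing, via such a coupling, that $|h(t) - \tilde{h}(t)| \to 0$ almost surely as $t \to +\infty$.

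Granting this a.s. coupling, the corollary follows in a few lines. Fix any test function $f \in C_b(\compconj{B}, \R)$. By stationarity of $\tilde{h}$ we have $\mathbb{E}[f(\tilde{h}(t))] = \int_{\compconj{B}} f\,d\mu_{K,\varepsilon}$ for every $t \geq 0$, hence
\[
\bigl| (\delta_x P_t^{B})(f) - \mu_{K, \varepsilon}(f) \bigr|
= \bigl| \mathbb{E}[f(h(t)) - f(\tilde{h}(t))] \bigr|
\leq \mathbb{E}\bigl[ |f(h(t)) - f(\tilde{h}(t))| \bigr].
\]
Continuity of $f$ together with the almost sure convergence $h(t) - \tilde{h}(t) \to 0$ yields $f(h(t)) - f(\tilde{h}(t)) \to 0$ almost surely, and boundedness of $f$ lets me conclude by dominated convergence that the right-hand side tends to $0$. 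Since $f \in C_b(\compconj{B}, \R)$ was arbitrary, this is exactly the weak convergence $\delta_x P_t^B \to \mu_{K,\varepsilon}$ claimed.

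The only genuinely difficult step is establishing the a.s. coupling $|h(t) - \tilde{h}(t)| \to 0$, which is not part of the corollary itself but of Theorem \ref{thm:conv_interieur}. That step will rely on the non-degeneracy of the diffusion on $\partial B$ (ensuring mixing on the boundary) combined with the fact that $|h(t)| \to 1$ almost surely, which forces the interior trajectory to collapse onto the boundary where the contraction available there can act. Once the coupling is in hand, Corollary \ref{cor:conv_weak} reduces to the routine dominated-convergence calculation above, so I expect no additional obstacle here beyond what is already being settled in the proof of Theorem \ref{thm:conv_interieur}.
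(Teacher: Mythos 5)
Your proposal is correct and is essentially identical to the paper's own proof: the paper also couples $h^x$ (started at $x\in B$) with the stationary solution $g$ with $g(0)\sim\mu_{K,\varepsilon}$ driven by the same Brownian motion, invokes the almost sure convergence $|h^x(t)-g(t)|\to 0$ (Proposition \ref{prop:conv_hg}), and concludes by continuity of the test function and dominated convergence. Your aside about \emph{why} the coupling converges is not quite how the paper argues it (the paper uses an explicit formula for $|h^x(t)-g(t)|^2$ and the ergodic theorem applied to $\mathrm{Re}\,g$, rather than mixing on the boundary), but that step belongs to Theorem \ref{thm:conv_interieur}, not to this corollary, so it does not affect your argument here.
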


The compactness of $\partial B$ implies immediately the minorization condition (see for example \cite{msh}), which is a kind of Doeblin's 
condition, and if we are on $\partial B$ the meaning of the convergence of the law of solution is stronger, i.e. exponential convergence in the total variation distance. 

\begin{proposition} \label{prop:conv_bord} Fix $K > 0$ and $\varepsilon > 0$. 
Consider the Markov semigroup $(P_t^{\partial B})_{t \geq 0}$ associated with the stochastic differential equation 
(\ref{eq:h}) on $\partial B$. 
There exist $C, \gamma_* > 0$ such that
$$
\| \delta_x P_t^{\partial B} - \mu_{K, \varepsilon} \|_\text{TV} 
\leq C e^{- \gamma_* t}
$$
for every $x \in \partial B$ and for all $t\ge 1$.
\end{proposition}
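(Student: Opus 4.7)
The plan is to establish Doeblin's minorization condition on the compact state space $\partial B$ and then invoke the standard coupling theorem for geometric contraction in total variation.

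First I would reduce equation (\ref{eq:h}) to a scalar SDE on the circle. Parametrizing $\partial B$ by $h = e^{i\theta}$ with $\theta \in \mathbb{R}/2\pi\mathbb{Z}$ and using the Stratonovich chain rule, a direct computation (note that $(1-e^{2i\theta})/(ie^{i\theta}) = -2\sin\theta$) yields
$$
d\theta = -2K\sin\theta\, dt + \sqrt{2}\,\varepsilon\, dW,
$$
a non-degenerate SDE with constant diffusion coefficient and smooth $2\pi$-periodic drift on the compact one-dimensional manifold $\mathbb{T} := \mathbb{R}/2\pi\mathbb{Z}$.

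Next I would invoke classical parabolic theory. The generator $\mathcal{L} = \varepsilon^2 \partial_\theta^2 - 2K\sin\theta\,\partial_\theta$ is uniformly elliptic on the compact manifold $\mathbb{T}$, so the Markov kernel $P_t^{\partial B}(x, \cdot)$ admits a smooth density $p_t(x,y)$ with respect to the normalized Lebesgue measure $\nu$ on $\partial B$, and the strong maximum principle applied to the uniformly parabolic backward Kolmogorov equation forces $p_t > 0$ on $\partial B \times \partial B$ for every $t > 0$. Continuity of $p_1$ together with compactness of $\partial B \times \partial B$ then gives a constant $c := \inf_{(x,y)\in \partial B \times \partial B} p_1(x,y) > 0$, and hence the Doeblin-type minorization
$$
P_1^{\partial B}(x, \cdot) \geq c\, \nu(\cdot) \qquad \text{for every } x \in \partial B.
$$

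Finally, I would apply the classical coupling argument (see, for example, Theorem 1.2 of \cite{msh}): the minorization condition implies the one-step contraction $\|\mu_1 P_1^{\partial B} - \mu_2 P_1^{\partial B}\|_{\mathrm{TV}} \leq (1-c)\|\mu_1 - \mu_2\|_{\mathrm{TV}}$ for any probability measures $\mu_1, \mu_2$ on $\partial B$. Iterating this estimate and using both the invariance of $\mu_{K,\varepsilon}$ under $P_t^{\partial B}$ and the TV-contractivity of any Markov kernel, we obtain, for every $t \ge 1$ and $n := \lfloor t \rfloor$,
$$
\|\delta_x P_t^{\partial B} - \mu_{K, \varepsilon}\|_{\mathrm{TV}} \leq \|\delta_x P_n^{\partial B} - \mu_{K, \varepsilon}\|_{\mathrm{TV}} \leq (1-c)^n \, \|\delta_x - \mu_{K, \varepsilon}\|_{\mathrm{TV}} \leq 2(1-c)^{t-1},
$$
which is the claimed bound with $\gamma_* = -\log(1-c) > 0$ and $C = 2/(1-c)$.

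The only genuinely technical point is the uniform lower bound $\inf p_1 > 0$; however, on a compact one-dimensional manifold with uniformly elliptic, smooth-coefficient generator, this is an immediate consequence of the strong maximum principle together with continuity and compactness, so I do not anticipate a serious obstacle.
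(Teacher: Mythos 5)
Your proposal is correct and follows essentially the same route as the paper: a Doeblin-type minorization of $P_1^{\partial B}$ on the compact circle, followed by the standard one-step total-variation contraction and its iteration over integer times. The only (harmless) difference is in how the minorization constant is produced --- you invoke global strict positivity of the heat kernel $p_1$ via uniform ellipticity and the strong maximum principle, whereas the paper uses only positivity of $p_{1/2}$ at a single point together with continuity, compactness and the Stroock--Varadhan support theorem; both are valid, and your angular reduction $d\theta = -2K\sin\theta\,dt + \sqrt{2}\,\varepsilon\,dW$ agrees with the generator the paper computes in polar coordinates.
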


\begin{figure}[h]

\begin{subfigure}{0.45\textwidth}
\includegraphics[width=1.1\linewidth, height=1.1\linewidth]{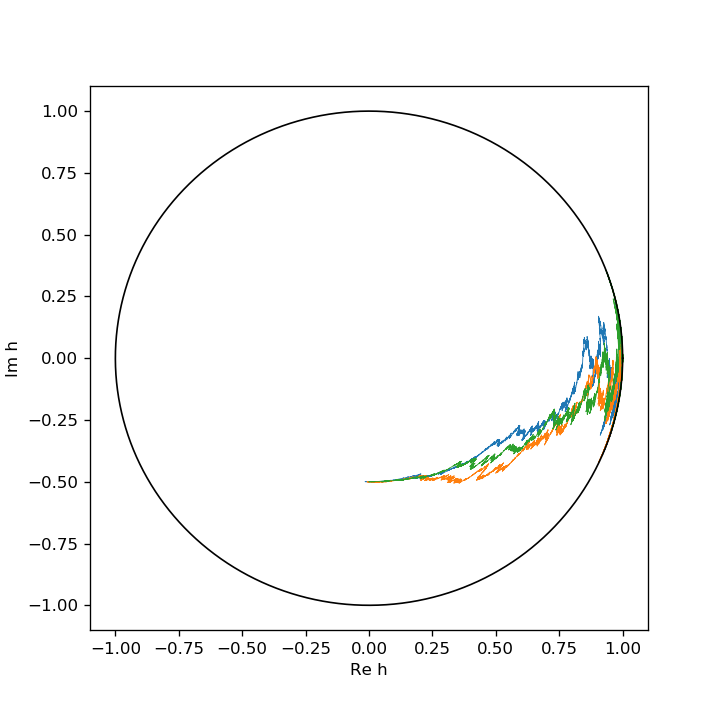} 
\caption{$K = 1$ and $\varepsilon = 0.2$}
\end{subfigure}
\begin{subfigure}{0.45\textwidth}
\includegraphics[width=1.1\linewidth, height=1.1\linewidth]{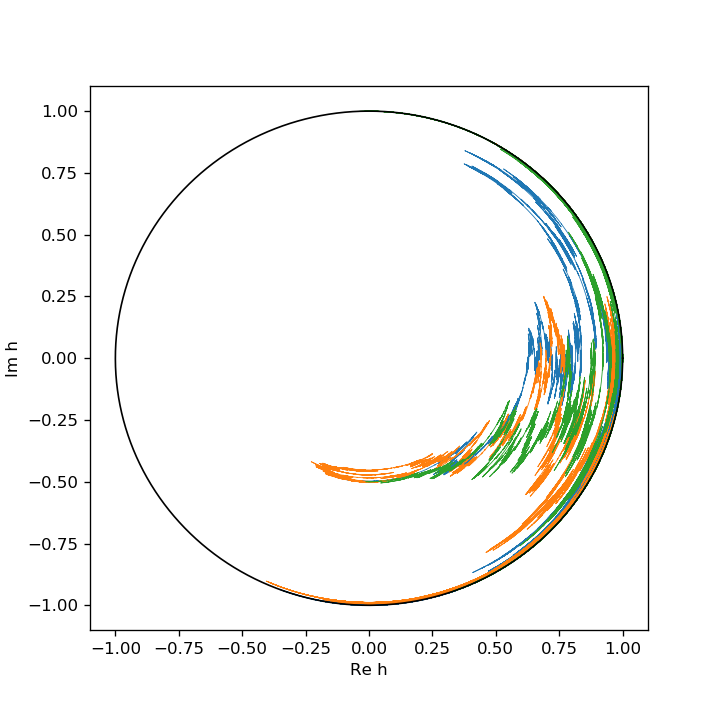}
\caption{$K = 1$ and $\varepsilon = 0.8$}
\end{subfigure}

\caption{Paths of $(h_t)_{t \geq 0}$ on the time interval $[0, 10]$}
\label{fig:image2}
\end{figure}

\begin{remark} \label{rem:1}
By Corollary \ref{cor:conv_weak} and the property of the density of $\mu_{K, \varepsilon}$ (see Proposition \ref{prop:esp_re} below), it may be also seen that 
$h(t)$ starting in the interior of the ball $B$ satisfies, after a long time passed,
$\E(\mathrm{Re} \, h(t))>0$. Namely, it will mostly be on the right side of $\bar{B}$ for the large time.
\end{remark}

\begin{remark}
By the convergence result Proposition \ref{prop:conv_bord} and 
Proposition \ref{prop:densite} below, applying 
Proposition 3.4.5 of \cite{dpz}, 
$h(t)$ is recurrent with respect to any small neighborhood $\mathcal{U} \subset \partial B$ of $1$. Thus, $h(t)$ with $h(0) \in \partial B$ leaves and returns to $\mathcal{U}$ infinitely many times. 
\end{remark}
\vspace{3mm}

All these convergence results in hand, we may have the following synchronization results for (\ref{eq:SSL_N}) with $N=2$. 

\begin{theorem} \label{thm:main}
Fix $K > 0$ and $\varepsilon > 0$. Let $(\phi_1, \phi_2)$ be a $\{\mathcal{F}_t\}_{t\ge 0}$-adapted process with paths in $C([0, \infty), L^2(\R^d, \C) \oplus L^2(\R^d, \C))$ satisfying
\begin{equation} \label{eq:SSL_2}
\left\{
\begin{array}{ll}
d \psi_1 &= i \Delta \psi_1 dt + \frac{K}{2} \left( \psi_2 - \langle \psi_1, \psi_2 \rangle \psi_1 \right) dt + i \varepsilon \psi_1 \circ d\beta_1\\
d \psi_2 &= i \Delta \psi_2 dt + \frac{K}{2} \left( \psi_1 - \langle \psi_2, \psi_1 \rangle \psi_2 \right) dt + i \varepsilon \psi_2 \circ d\beta_2
\end{array}
\right.
\end{equation}
with initial data $(\psi_1(0), \psi_2(0))=(\psi_{1,0}, \psi_{2,0}) \in L^2(\mathbb{R}^d, \mathbb{C}) \oplus
L^2(\mathbb{R}^d, \mathbb{C})$ such that 
$\lVert \psi_{1,0} \rVert = \lVert \psi_{2,0} \rVert = 1$ obtained in Theorem \ref{thm:cauchy}.
Then, 
\begin{itemize}
\item[(i)] There exists a probability measure $\nu_{K, \varepsilon}$ on $\mathbb{R}$ such that
$$
\mathrm{Law} (\lVert \psi_1(t) - \psi_2(t) \rVert) \xrightarrow{t \rightarrow +\infty} \nu_{K, \varepsilon}
$$
in distribution. This limit $\nu_{K, \varepsilon}$ depends only on $\frac{\varepsilon^2}{K}$ and not on $(\psi_{1, 0}, \psi_{2, 0})$.
\item[(ii)] Moreover, we have 
$$\lim_{t \to \infty} \inf_{\theta \in \R} \|\psi_1(t) - e^{i\theta}\psi_2(t)\| =0,$$ 
almost surely.
\end{itemize}
\end{theorem}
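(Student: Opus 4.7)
My plan is to reduce both parts of the theorem to statements about the scalar correlation process $h(t)=\langle\psi_1(t),\psi_2(t)\rangle$, for which Theorem \ref{thm:conv_interieur}, Theorem \ref{thm:mesure_inv}, Corollary \ref{cor:conv_weak}, and Proposition \ref{prop:conv_bord} already provide everything that is needed. The key observation is that, by $L^2$-conservation,
\[
\|\psi_1(t)-e^{i\theta}\psi_2(t)\|^2=2-2\operatorname{Re}(e^{-i\theta}h(t))
\]
for every $\theta\in\mathbb R$. Specializing to $\theta=0$ gives $\|\psi_1(t)-\psi_2(t)\|^2=2-2\operatorname{Re} h(t)$, and maximizing $\operatorname{Re}(e^{-i\theta}h(t))$ over $\theta$, which is attained at $\theta=\arg h(t)$, gives $\inf_\theta\|\psi_1(t)-e^{i\theta}\psi_2(t)\|^2=2(1-|h(t)|)$. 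Thus part (i) will follow from the convergence in distribution of $h(t)$, and part (ii) will follow from $|h(t)|\to 1$ a.s.

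For part (i), I will invoke the continuous mapping theorem with the continuous bounded function $F:\overline B\to\mathbb R_+$, $F(z)=\sqrt{2-2\operatorname{Re} z}$. The deterministic initial value $h(0)=\langle\psi_{1,0},\psi_{2,0}\rangle$ lies in $\overline B$, and the argument splits into two cases. If $|h(0)|<1$, Corollary \ref{cor:conv_weak} gives weak convergence $\delta_{h(0)}P_t^{B}\Rightarrow\mu_{K,\varepsilon}$; if $|h(0)|=1$, Proposition \ref{prop:conv_bord} gives convergence in total variation, which is stronger than weak convergence. In either case the continuous mapping theorem yields $\operatorname{Law}(\|\psi_1(t)-\psi_2(t)\|)\Rightarrow\nu_{K,\varepsilon}\defeq F_\sharp\mu_{K,\varepsilon}$, and independence of $\nu_{K,\varepsilon}$ from $(\psi_{1,0},\psi_{2,0})$ is automatic from uniqueness of $\mu_{K,\varepsilon}$. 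To show that $\nu_{K,\varepsilon}$ depends only on $\varepsilon^2/K$, I will apply the time change $\tilde h(s)\defeq h(s/K)$, $\tilde W_s\defeq\sqrt K\,W_{s/K}$ to (\ref{eq:h}); since $\tilde W$ is again a standard Brownian motion and $dW_t=K^{-1/2}d\tilde W_s$ with $s=Kt$, a routine Stratonovich computation gives
\[
d\tilde h=(1-\tilde h^2)\,ds+\sqrt{2}\,i\,\tfrac{\varepsilon}{\sqrt K}\,\tilde h\circ d\tilde W_s,
\]
and since a deterministic time change preserves invariant measures, $\mu_{K,\varepsilon}$ equals the invariant measure of the rescaled SDE, which is driven by the single parameter $\varepsilon/\sqrt K$. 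Hence $\nu_{K,\varepsilon}=F_\sharp\mu_{K,\varepsilon}$ depends only on $\varepsilon^2/K$.

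Part (ii) then reduces, via $\inf_\theta\|\psi_1(t)-e^{i\theta}\psi_2(t)\|^2=2(1-|h(t)|)$, to the almost-sure convergence $|h(t)|\to 1$. When $|h(0)|=1$, the invariance of $\partial B$ noted before Theorem \ref{thm:mesure_inv} gives $|h(t)|=1$ for all $t\ge 0$; when $|h(0)|<1$, this is precisely the content of Theorem \ref{thm:conv_interieur}. Essentially all the substance of the argument has been packaged into the earlier results, so I do not anticipate a serious obstacle; the only steps that deserve care are verifying that $h(t)$ truly satisfies (\ref{eq:h}) as a Stratonovich SDE derived from (\ref{eq:SSL_2}) via the product rule and $L^2$-conservation, and checking that the time-rescaling step in the previous paragraph does identify the invariant measures correctly.
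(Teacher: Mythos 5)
Your proof is correct and follows essentially the same route the paper intends: reduce everything to the correlation process $h(t)=\langle\psi_1(t),\psi_2(t)\rangle$ via $L^2$-conservation, apply Corollary \ref{cor:conv_weak} (resp.\ Proposition \ref{prop:conv_bord}) together with the continuous mapping theorem for part (i), and Theorem \ref{thm:conv_interieur} together with the identity $\inf_{\theta}\|\psi_1(t)-e^{i\theta}\psi_2(t)\|^2=2(1-|h(t)|)$ for part (ii). The only cosmetic difference is that the paper obtains the dependence of $\nu_{K,\varepsilon}$ on $\varepsilon^2/K$ directly from the explicit density $\exp\left(\frac{2K}{\varepsilon^2}\cos\theta\right)$ of Proposition \ref{prop:densite}, whereas you derive it by a Brownian time change; both are valid.
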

\vspace{0.1 in}

For $\varepsilon>0$ small, by the cerebrated Freidlin-Wenzell Theory \cite{fv}, one can see that the supremum of $|h(t)-1|$ on a finite time interval is small almost surely,  
but due to the noise effective to the angular direction, for any $\varepsilon>0$, $h(t)$ exits from any small neighborhood of $1$ in a finite time almost surely. If $\varepsilon>0$ is small, the probability of this exit should be small and the analysis of this rare event is essential to know the asymptotics of this probability involving the behavior of $h(t)$ for small $\varepsilon>0$ by the use of large deviation principle. In a similar spirit, but simply making use of the explicit formula of the density of $\mu_{K, \varepsilon}$ (see Proposition \ref{prop:densite} below)  
%following section 4 of \cite{fv}, 
we derive the limit behavior as $\frac{\varepsilon^2}{K} \to 0$
of the invariant measure $\mu_{K, \varepsilon}$ which implies the following proposition in term of synchronization. Note that it is natural to consider 
the limit $\frac{\varepsilon^2}{K} \to 0$ by scaling of equation (\ref{eq:h}).  

\begin{proposition} \label{prop:conv_eps_to_zero}
Under the same assumption of Theorem \ref{thm:main},  
the probability measure $\nu_{K, \varepsilon}$ of (i) of Theorem \ref{thm:main} converges to $\delta_0$ when $\frac{\varepsilon^2}{K} \rightarrow 0$.
\end{proposition}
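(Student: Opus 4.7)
The plan is to combine Theorem \ref{thm:main}(i), which identifies $\nu_{K,\varepsilon}$ as a pushforward of $\mu_{K,\varepsilon}$, with the explicit density of $\mu_{K,\varepsilon}$ provided by Proposition \ref{prop:densite}, and then run a Laplace-type concentration argument as $\varepsilon^2/K \to 0$.

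First I would record the key identity. By $L^2$-norm conservation,
$$\|\psi_1(t) - \psi_2(t)\| = F(h(t)), \qquad F(z) := \sqrt{2 - 2\mathrm{Re}\, z},$$
and $F : \overline{B} \to \R$ is continuous. By Corollary \ref{cor:conv_weak} together with Proposition \ref{prop:conv_bord}, the law of $h(t)$ converges weakly to $\mu_{K,\varepsilon}$, viewed as a probability measure on $\overline{B}$, as $t \to \infty$. Theorem \ref{thm:main}(i) and the continuous mapping theorem therefore give
$$\nu_{K,\varepsilon} = F_{\#}\,\mu_{K,\varepsilon}.$$
Since $F$ is continuous and $F(1) = 0$, a second application of the continuous mapping theorem reduces the problem to proving the weak convergence $\mu_{K,\varepsilon} \to \delta_1$ on $\partial B$ as $\varepsilon^2/K \to 0$.

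Using the angular parametrization $h = e^{i\theta}$ on $\partial B$, equation (\ref{eq:h}) reduces via Itô's formula to the one-dimensional Itô SDE $d\theta = -2K\sin\theta\,dt + \sqrt{2}\,\varepsilon\, dW$. Solving the stationary Fokker--Planck equation on the circle (or equivalently invoking Proposition \ref{prop:densite}) yields the von Mises density
$$\rho_{K,\varepsilon}(\theta) = Z_{K,\varepsilon}^{-1}\exp\!\left(\frac{2K}{\varepsilon^2}\cos\theta\right).$$
Setting $\lambda := 2K/\varepsilon^2$, so that $\lambda \to \infty$ in our regime, I would then fix $\delta \in (0,\pi)$ and crudely bound
$$\mu_{K,\varepsilon}\bigl(\{e^{i\theta} : |\theta| > \delta\}\bigr) \leq \frac{\int_{\delta < |\theta| \leq \pi} e^{\lambda\cos\theta}\,d\theta}{\int_{|\theta| < \delta/2} e^{\lambda\cos\theta}\,d\theta} \leq \frac{2\pi}{\delta}\, e^{-\lambda(\cos(\delta/2) - \cos\delta)} \xrightarrow{\lambda \to \infty} 0,$$
using that $\cos$ is maximized at $0$ on $[-\pi,\pi]$ and that $\cos(\delta/2) > \cos\delta$. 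This gives $\mu_{K,\varepsilon} \to \delta_1$ weakly on $\partial B$ and therefore $\nu_{K,\varepsilon} \to \delta_0$.

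The argument is genuinely short once the explicit density is in hand; no ingredient beyond standard tools is needed. The step deserving the most care is the double-limit structure, since Theorem \ref{thm:main}(i) produces $\nu_{K,\varepsilon}$ as a $t \to \infty$ limit for fixed $\varepsilon$, whereas one then sends $\varepsilon^2/K \to 0$. The pushforward identity $\nu_{K,\varepsilon} = F_{\#}\mu_{K,\varepsilon}$ derived in the second paragraph cleanly disentangles these two limits, after which the problem becomes a routine concentration statement for a one-parameter family of invariant measures on the compact circle $\partial B$.
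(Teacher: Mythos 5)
Your proposal is correct and follows essentially the same route as the paper: reduce the claim, via the pushforward identity $\nu_{K,\varepsilon}=F_{\#}\mu_{K,\varepsilon}$ with $F(z)=\sqrt{2-2\mathrm{Re}\,z}$, to the weak convergence $\mu_{K,\varepsilon}\to\delta_1$, and then obtain the concentration of $\mu_{K,\varepsilon}$ near $1$ from the explicit von Mises density of Proposition \ref{prop:densite}. The only (cosmetic) difference is that the paper deduces $\mu_{K,\varepsilon}(U^c)\to 0$ by citing its large deviation principle (Theorem \ref{large_deviation_principle}), whereas you re-derive exactly that special case by a direct two-sided Laplace estimate on the density; both rest on the same computation.
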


In other words, when $K > 0$ and $\varepsilon > 0$ are fixed and $t \rightarrow +\infty$ we have that $\lVert \psi_1(t) - \psi_2(t) \rVert$ converges to a limit that is very close to $\delta_0$ provided that $\frac{\varepsilon^2}{K}$ is close enough to $0$. This could be interpreted 
as a similar scenario 
of the result in \cite{choi16}, which considers the system of equations adding a potential term:
$$
i\partial_t\psi_j = - \Delta \psi_j + V_j \psi_j + \frac{iK}{N} 
\sum_{k=1}^N \left( \psi_k -{\langle \psi_j, \psi_k \rangle} \psi_j \right), \quad j=1, 2, ..., N,
$$
where $V_j = V_j(x, t)$ are $B_b(\R^d_x \times \R_t, \R)$ functions: 
The authors of \cite{choi16} actually showed 
%a synchronization provided $K$ is large enough compared to 
%$\max_{jk} \lVert V_j - V_k \rVert_{L^\infty}$. 
%This result gives a validity to our results above because if we interpret %$\max_{jk} \lVert V_j - V_k \rVert_{L^\infty}$ as the size of a noise, this result %tells us 
that a synchronization occurs provided that the coupling strength $K$ is much larger than $\max_{jk} \lVert V_j - V_k \rVert_{L^\infty}$. 
\vspace{3mm} 

The paper is organized as follows. We show the global existence and uniqueness of solutions to (\ref{eq:SSL_N}) in Section 3. We investigate the existence, uniqueness, and the density of invariant measures in Section 4. The large deviation principle of the invariant measure on $\partial B$ is also given in Section~4. Section 5 is devoted to the convergence results of the law of solutions on $B$ and $\partial B$.

\section{Existence of solution in the stochastic case}

In this section, we establish the global existence, uniqueness and $L^2$-norm  conservation of solution for (\ref{eq:SSL_N}). As mentioned in the introduction, it suffices to prove the following Proposition for Theorem \ref{thm:cauchy} on the existence of the solution to (\ref{eq:DSL_N}). Let $k\ge 0$ be a fixed integer in what follows. 

\begin{proposition} Fix $K > 0$ and $\varepsilon > 0$. 
Let $\psi_{j,0} \in H^k(\R^d, \C)$ with $\|\psi_{j,0}\|=1$ for $j=1, 2, ..., N.$
There exist a unique, $\{\mathcal{F}_t\}_{t\ge 0}$-adapted solution of (\ref{eq:DSL_N}), $(\phi_1, \phi_2, ..., \phi_N) \in 
C([0,+\infty), H^k(\R^d, \C) \oplus \cdots \oplus H^k(\R^d, \C))$, a.s. 
with 
$(\phi_1(0), \phi_2(0), ..., \phi_N(0))=(\psi_{1,0}, \psi_{2,0},..., \psi_{N,0})$. Moreover, 
$$ \|\phi_{j}(t)\|=\|\psi_{j,0}\|=1, \quad j=1, 2, ...., N,$$
for all $t \geq 0$, almost surely.
\end{proposition}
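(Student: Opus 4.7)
The plan is to work pathwise. Fix $\omega$ in an event of full probability on which every trajectory $t\mapsto\beta_j(t,\omega)$ is continuous; then (\ref{eq:DSL_N}) becomes a deterministic PDE with time-dependent coefficients $c_{jk}(t):=e^{-i\varepsilon(\beta_j-\beta_k)(t)}$ of modulus one. Using the free Schr\"odinger group $S(t)=e^{it\Delta}$, which is unitary on every $H^k(\R^d,\C)$, I would solve the mild formulation
\[
\phi_j(t)=S(t)\psi_{j,0}+\int_0^t S(t-s)F_j(s,\phi(s))\,ds,\quad F_j(s,\phi):=\frac{K}{N}\sum_{k=1}^N\bigl(c_{jk}(s)\phi_k-\overline{c_{jk}(s)}\langle\phi_j,\phi_k\rangle\phi_j\bigr),
\]
by a Banach fixed point in $C([0,\tau],(H^k)^N)$. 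Local Lipschitz control comes from the estimate $\|\langle\phi_j,\phi_k\rangle\phi_j\|_{H^k}\le\|\phi_j\|\,\|\phi_k\|\,\|\phi_j\|_{H^k}$ together with $|c_{jk}|=1$, so $\tau$ depends only on the initial $H^k$ size. Adaptedness of the limit is inherited from the Picard iterates, since the coefficients $c_{jk}(s)$ are $\mathcal{F}_s$-measurable.

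For the $L^2$-conservation I would first take $k$ large enough to justify classical integration by parts (the case $k=0$ follows by regularizing $\psi_{j,0}$ in $H^2$ and using the $L^2$ continuous dependence already established). Writing $n_j(t):=\|\phi_j(t)\|^2$, the term $\langle i\Delta\phi_j,\phi_j\rangle$ is purely imaginary, and using the identity $\overline{c_{jk}\langle\phi_k,\phi_j\rangle}=\overline{c_{jk}}\,\langle\phi_j,\phi_k\rangle$ the nonlinear contribution collapses to
\[
\frac{d}{dt}n_j(t)=\frac{2K}{N}\bigl(1-n_j(t)\bigr)\sum_{k=1}^N\operatorname{Re}\!\bigl(c_{jk}(t)\langle\phi_k(t),\phi_j(t)\rangle\bigr).
\]
This is a linear ODE in $n_j-1$ with continuous coefficient, so $n_j(0)=1$ forces $n_j\equiv 1$ on the whole existence interval. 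With the a priori bound $\|\phi_j(t)\|=1$ in hand, the mild formulation yields
\[
\|\phi_j(t)\|_{H^k}\le\|\psi_{j,0}\|_{H^k}+C\int_0^t\sum_{l=1}^N\|\phi_l(s)\|_{H^k}\,ds,
\]
and Gronwall's inequality precludes $H^k$-blow-up in finite time. Since the local existence time in the contraction argument depends only on the $H^k$ size of the data, we can iterate and extend to $[0,+\infty)$.

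The genuinely subtle point is the rigorous derivation of the ODE for $n_j$ at the target low regularity $k=0$: at that level $\Delta\phi_j$ lives only in $H^{-2}$ and the pairing $\langle i\Delta\phi_j,\phi_j\rangle$ is not directly meaningful. I would handle this by the standard smoothing argument (approximate $\psi_{j,0}$ by $H^2$ data, use the identity for those approximations, and pass to the limit using the local $L^2$-Lipschitz dependence of the flow), which is exactly the route taken in \cite{dbf}. Everything else — local existence, uniqueness, adaptedness, and the global $H^k$ bound — is a direct pathwise transcription of the deterministic arguments, so no further obstacle arises.
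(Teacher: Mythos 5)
Your proposal is correct and follows essentially the same route as the paper: a pathwise mild formulation with the unitary free Schr\"odinger group, a Banach fixed point giving local well-posedness with an existence time depending only on the $H^k$ size of the data, the closed linear ODE for $\|\phi_j\|^2-1$ (justified for $H^2$ data and transferred to $L^2$ by density and continuous dependence) yielding norm conservation, and globalization from the resulting a priori bound. The only cosmetic difference is that you make the globalization explicit via a Gronwall estimate on the $H^k$ norm, where the paper invokes the standard blow-up alternative and persistence-of-regularity argument from Cazenave; both are equivalent here.
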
 

\begin{proof}
We write the equation (\ref{eq:DSL_N}) in the mild form:
\begin{eqnarray*}
[\mathcal{T}^{\omega} u](t)=U(t)u_0 + \int_0^t U(t-s) F_\omega(s, u(s)) ds,
\end{eqnarray*}
where we put $u=(\phi_1, \ldots, \phi_N)$,  
$u_0 = (\psi_{0, 1}, \ldots,\psi_{0, N})$, $(U(t))_{t\ge 0}$ is the  semigroup generated by the component-wise Laplacian and
\begin{eqnarray*}
F_{\omega}(t, u(t)) &=& F_\omega(t, (\phi_1(t),...,\phi_N(t))) \\
&=& \frac{K}{N}\Big(\sum_{k=1}^N e^{-i\varepsilon (\beta_1(t)-\beta_k(t))}\phi_k(t) 
- e^{i\varepsilon(\beta_1(t)-\beta_k(t))}\langle \phi_1(t), \phi_k(t) \rangle \phi_1(t), ....,\\
&& \hspace{3mm}\sum_{k=1}^N e^{-i\varepsilon (\beta_N(t)-\beta_k(t))}\phi_k(t) - e^{i\varepsilon(\beta_N(t)-\beta_k(t))}\langle \phi_N(t), \phi_k(t) \rangle \phi_N(t) \Big).
\end{eqnarray*}

We set $H = H^k(\mathbb{R}^d, \C) \oplus \cdots \oplus H^k(\mathbb{R}^d, \C)$, denote the norm by $\|\cdot \|_{H}$.  
We also set $X(T) = C([0, T]; H)$, and   
$$\bar{B}_T(0, R):=\{v \in X(T),~ \|v\|_{X(T)} \le R\},$$
with $R=2\|u_0\|_H.$
Since we have 
$$\|F_\omega(t, u)\|_{H} \le NK(\|u\|_H +\|u\|_H^3),$$
using the fact that $U(t)$ is a unitary operator in $H$ for all $t \geq 0$ 
we get for $t\in [0,T],$ if $u \in \bar{B}_T(0,R)$,
\begin{eqnarray*}
\Big\| [\mathcal{T}^{\omega} u](t) \Big\|_{H} 
&\leq& \| U(t)u_0 \|_{H} + \int_0^t \| U(t-s)F_\omega(s, u(s)) \|_{H} ds \\  
&\leq& \| u_0\|_{H} + T \underset{s \in [0, T]}{\sup} 
\| F_\omega(t, u(s))\|_{H} \\
&\leq& \frac{R}{2} + TNK(R+R^3).
\end{eqnarray*}
Moreover, 
since for $u,v \in H$, for all $t \geq 0$
$$
\|F_\omega(t, u) - F_\omega(t, v)\|_H \leq NK( \|u - v \|_H 
+ \Tilde{C} \| u - v \|_H 
(\|u \|_H^2 + \|v \|_H^2)), 
$$
if $u,v \in \bar{B}_T(0,R)$, then we have 
$$
\Big\lVert \mathcal{T}^{\omega}u - \mathcal{T}^{\omega}v \Big\rVert_{X(T)} \leq T NK(1 + 2\Tilde{C} R^2) \lVert u - v \rVert_{X(T)}.
$$
Thus, 
%for $\omega \in \Omega$ %for which the Brownian %motion is continuous in %time, 
$\mathcal{T}^{\omega}$ is a contraction on $\bar{B}_T(0,R)$ since $U(t)$ is continuous in time, for $T$ sufficiently small. Note that $T$ depends only on $\|u_0\|_H$ and $\omega$.  
We can now use the Banach fixed-point theorem to obtain the existence of $u \in C([0, T], H)$, a solution of 
the mild form.
The standard argument in \cite{c} allows to have also   
the uniqueness and the blow-up alternative.  
By the $H^k$-regularity argument (see Chapter 5 of \cite{c}), there exists the maximal existence time $T^*=T^*(\|u_0\|_{L^2 \oplus \cdots \oplus L^2}, \omega)>0$, and 
if $T^*<+\infty$ then 
$\lim_{t \uparrow T^*}\|u(t)\|_{L^2 \oplus \cdots \oplus L^2} =+\infty.$

\vspace{3mm}

We prove the conservation of $L^2$ norm. This can be seen formally as follows. 
For a fixed $j\in \{1,..., N\},$
\begin{eqnarray*}
\partial_t \lVert \phi_j \rVert^2 
&=& 2 \mathrm{Re} \langle \partial_t \phi_j, \phi_j \rangle \\
&=& 2 \mathrm{Re} \langle i \Delta \phi_j 
+ \frac{K}{N}\sum_{k=1}^N (e^{-i\varepsilon(\beta_j-\beta_k)} \phi_k - 
e^{i\varepsilon(\beta_j-\beta_k)} 
\langle \phi_j, \phi_k \rangle \phi_j), \phi_j \rangle \\
&=& 2 \mathrm{Re} \langle i \Delta \phi_j, \phi_j \rangle 
 + \frac{2K}{N} \mathrm{Re}[\sum_{k=1}^N 
e^{-i\varepsilon(\beta_j-\beta_k)} \langle \phi_k, \phi_j \rangle] \\ 
&& - \frac{2K}{N} \mathrm{Re} [\sum_{k=1}^N e^{i\varepsilon(\beta_j-\beta_k)}
\langle \phi_j, \phi_k \rangle \langle \phi_j, \phi_j \rangle] \\
&=& \frac{2K}{N} \left\{\sum_{k=1}^N \mathrm{Re} [e^{-i\varepsilon(\beta_j-\beta_k)}\langle \phi_k, \phi_j \rangle]\right\} 
(1 - \lVert \phi_j \rVert^2).
\end{eqnarray*}
Hence 
$$\lVert \phi_j \rVert^2 = (\lVert \psi_{j, 0} \rVert^2 - 1) 
e^{-\frac{2K}{N} \int_0^t \sum_{k=1}^N \mathrm{Re} [e^{-i\varepsilon(\beta_j(s)-\beta_k(s))} \langle \phi_k(s), \phi_j(s)\rangle] ds}
+ 1.$$ 
This implies that $\lVert \phi_j(t) \rVert = 1$ for all $t \geq 0$ when $\lVert \psi_{j, 0}\rVert = 1$. 

When the initial wave functions $\psi_{j, 0}$ is in $H^2(\R^d, \mathbb{C})$ 
then $t \mapsto \lVert \phi_j \rVert^2$ is differentiable and we can actually do the formal computation above and obtain that $\lVert \phi_j(t) \rVert = 1$ 
for all $t \geq 0$. 

Now we assume that $\psi_{j,0} \in L^2(\R^d, \C)$ for $j=1, 2, ..., N$. 
In this case, we take $\left(\psi_{j, 0}^{(n)}\right)_{n \in \mathbb{N}}$ 
to be a sequence of functions in $H^2(\R^d, \mathbb{C})$ such that
$$
\lVert \psi_{j, 0}^{(n)} \rVert = 1 \text{ for all } n \in \mathbb{N}
$$
and
$$
\lim_{n \rightarrow +\infty} \psi_{j, 0}^{(n)} = \psi_{j, 0},
$$
where the convergence takes place in $L^2(\R^d, \mathbb{C})$. 
Let $(\phi_1^{(n)}, \phi_2^{(n)}, ..., \phi_N^{(n)})$ be the solution of $(\ref{eq:DSL_N})$ with initial data 
$(\psi_{1,0}^{(n)}, \psi_{2,0}^{(n)}, ..., \psi_{N,0}^{(n)})$. 
Then $\lVert \phi_j^{(n)}(t)\rVert = 1$ for all $t \geq 0$, all $j=1,2,..., N$, 
and all $n \in \mathbb{N}$. 
Furthermore, for $t \geq 0$ fixed, 
$$\lVert \phi_j^{(n)}(t) \rVert \xrightarrow{n \rightarrow +\infty} \lVert \phi_j(t) \rVert.$$ 
This implies that the $L^2$ norm is conserved, equals $1$, and further $T^*=+\infty$. Finally, $\{\mathcal{F}_t\}_{t\ge 0}$-adaptivity follows from that fact that $u(t)$ is obtained by a fixed point method, using the cut-off argument in $F_\omega(t, u)$. 
%Notice $\lVert \psi_1(t) \rVert = \lVert \phi_1(t) %\rVert$ and $\lVert \psi_2(t) \rVert = \lVert \phi_2(t) %\rVert$ so it is enough to show that the unique mild %solution of this system of partial differential %equations preserves the $L^2$-norm. 
\end{proof}

From now on, we consider the case $N=2$, and we consider only two wave functions $(\psi_1, \psi_2)$ which are the solution of the system (\ref{eq:SSL_2}).  
%\begin{eqnarray*} 
%d \psi_1 &=& i\Delta\psi_1 dt + \frac{K}{2}\Big(\psi_2 - \langle \psi_1, %\psi_2\rangle \psi_1\Big) dt + i \varepsilon \psi_1 \circ d\beta_1 \\
%d \psi_2 &=& i\Delta\psi_2 dt + \frac{K}{2}\Big(\psi_1 - \langle \psi_2, %\psi_1\rangle \psi_2\Big) dt + i \varepsilon \psi_2 \circ d\beta_2 \\
%\end{eqnarray*}
%with initial data 
%$\psi_{1,0}, \psi_{2,0} \in L^2(\mathbb{R}^d, \mathbb{C})$ such that
%$\|\psi_1 \|= \|\psi_2 \|= 1.$
\vspace{0.1 in}

Recall that our goal is to analyse the asymptotic behavior of 
$\| \psi_1 - \psi_2 \|$. Since we have already shown 
that the $L^2$-norm of $\psi_1$ and $\psi_2$ is preserved, 
the study of $\| \psi_1(t) - \psi_2(t) \|$ reduces to the study of $\langle \psi_1(t), \psi_2(t) \rangle$. We will thus define the stochastic process $(h(t))_{t \geq 0}$ defined by $h(t)= \langle \psi_1(t), \psi_2(t) \rangle$ and show that it is the solution of a stochastic differential equation. The study of this stochastic differential equation will allow us to conclude our analysis of the asymptotic behavior of $\| \psi_1 - \psi_2 \|$. 

\begin{lemma} Fix $K > 0$ and $\varepsilon > 0$. 
Let $(\psi_1, \psi_2) \in C([0,+\infty), L^2(\R^d, \C) \oplus L^2(\R^d, \C))$ 
be the solution of (\ref{eq:SSL_2}) 
with initial data $\psi_{1, 0}, \psi_{2, 0} \in L^2(\mathbb{R}^d, \mathbb{C})$ satisfying $\|\psi_{1, 0} \|= \| \psi_{2, 0} \|= 1.$ 
Then, the stochastic process $(h(t))_{t \geq 0}$ defined by $h(t) = \langle \psi_1(t), \psi_2(t) \rangle$ satisfies the following stochastic differential equation:
$$
d h= K(1 - h^2) dt + \sqrt{2} i \varepsilon h\circ dW,
$$
where $W = \frac{1}{\sqrt{2}}(\beta_1 - \beta_2)$ is a Brownian motion associated to the filtration $\{\mathcal{F}_t\}_{t\ge 0}$.
\end{lemma}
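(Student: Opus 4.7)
The plan is to apply the Stratonovich chain rule to the sesquilinear map $(\psi_1, \psi_2) \mapsto \langle \psi_1, \psi_2 \rangle$, which, for Stratonovich differentials, obeys the natural Leibniz rule $d\langle \psi_1, \psi_2 \rangle = \langle d\psi_1, \psi_2 \rangle + \langle \psi_1, d\psi_2 \rangle$ with no Itô correction term. Because the drift in (\ref{eq:SSL_2}) contains the unbounded operator $\Delta$, this pathwise calculation only makes direct sense when the $\psi_j$ lie in $H^2$. I will therefore first carry out the computation for $H^2$ initial data and then pass to the $L^2$ limit via the approximation scheme already used in the proof of Theorem \ref{thm:cauchy}.

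For $\psi_{j,0} \in H^2(\R^d, \C)$ with $\|\psi_{j,0}\|=1$, substituting (\ref{eq:SSL_2}) and grouping terms yields three contributions to $dh$. The Laplacian part $i\langle \Delta \psi_1, \psi_2 \rangle\, dt - i\langle \psi_1, \Delta \psi_2 \rangle\, dt$ vanishes by integration by parts (self-adjointness of $\Delta$ on $H^2$). The coupling part gives $\frac{K}{2}(\langle \psi_2, \psi_2 \rangle - h\cdot h)\, dt + \frac{K}{2}(\langle \psi_1, \psi_1 \rangle - h\cdot h)\, dt = K(1-h^2)\, dt$, where I use the conservation $\|\psi_j\|^2 = 1$ and, crucially, the conjugate-linearity of $\langle \cdot, \cdot \rangle$ in its second argument so that $\langle \psi_1, \overline{h}\, \psi_2 \rangle = h \cdot h = h^2$. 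The noise part combines as $\langle i\varepsilon\psi_1, \psi_2 \rangle \circ d\beta_1 + \langle \psi_1, i\varepsilon\psi_2 \rangle \circ d\beta_2 = i\varepsilon h \circ d\beta_1 - i\varepsilon h \circ d\beta_2 = \sqrt{2}\, i\varepsilon h \circ dW$ with $W = (\beta_1 - \beta_2)/\sqrt{2}$; this $W$ is a standard Brownian motion because $\beta_1$ and $\beta_2$ are independent.

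The remaining, and main, obstacle is passing from $H^2$ to $L^2$ initial data, where $\Delta \psi_j$ is only distributional and the Laplacian cancellation cannot be invoked pointwise. I would take approximations $\psi_{j,0}^{(n)} \in H^2$ of unit norm converging to $\psi_{j,0}$ in $L^2(\R^d, \C)$, invoke the continuity of the solution map provided by Theorem \ref{thm:cauchy} to obtain $\psi_j^{(n)}(t) \to \psi_j(t)$ in $L^2$ almost surely (hence $h^{(n)}(t) \to h(t)$), and pass to the limit in the integral form of the SDE for $h^{(n)}$. For the stochastic integral it is convenient to first convert to Itô form (the Stratonovich--Itô correction for $\sqrt{2}\, i\varepsilon h \circ dW$ being $-\varepsilon^2 h\, dt$) and then apply Itô's isometry, exploiting the uniform bound $|h^{(n)}| \leq 1$ from Cauchy--Schwarz; the polynomial drift $K(1-h^2)$ is handled by dominated convergence. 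This delivers the SDE for $h$ in the required $L^2$ generality.
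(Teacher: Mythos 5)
Your computation is correct and the end result matches the paper's, but the route is genuinely different at the key step. You apply the Stratonovich Leibniz rule directly to $(\psi_1,\psi_2)\mapsto\langle\psi_1,\psi_2\rangle$ in the original variables, so the noise contributions $i\varepsilon h\circ d\beta_1-i\varepsilon h\circ d\beta_2=\sqrt2\,i\varepsilon h\circ dW$ and the drift $K(1-h^2)$ come out in one stroke; the paper instead never does stochastic calculus on the infinite-dimensional objects at all. It works with the gauged functions $\phi_j=e^{-i\varepsilon\beta_j}\psi_j$ solving the random PDE (\ref{eq:DSL_N}), computes $\partial_t\langle\phi_1^{(n)},\phi_2^{(n)}\rangle$ as an ordinary pathwise derivative (only the self-adjointness of $\Delta$ and the norm conservation are used, exactly as in your middle paragraph), and then applies the It\^o formula only to the scalar factor $e^{-\sqrt2\,i\varepsilon W(t)}$ before multiplying the two. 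What the paper's route buys is that the only stochastic object ever differentiated is an explicit exponential of a one-dimensional Brownian motion, so no infinite-dimensional It\^o/Stratonovich formula is needed; it is also the natural route given that the solution of (\ref{eq:SSL_2}) in Theorem \ref{thm:cauchy} is \emph{constructed} as $e^{i\varepsilon\beta_j}\phi_j$, so the sense in which $\psi_j$ satisfies the Stratonovich equation is itself mediated by the gauge transform. What your route buys is directness, but to make it rigorous even for $H^2$ data you should say which chain rule you are invoking for the sesquilinear functional of two $L^2$-valued semimartingales: the clean justification is to pass to It\^o form, note that the cross-variation term in the bilinear It\^o formula vanishes because $\beta_1$ and $\beta_2$ are independent and each noise coefficient is a bounded (multiplication) operator, and check that the $-\varepsilon^2 h\,dt$ corrections from the two Stratonovich-to-It\^o conversions recombine into $\sqrt2\,i\varepsilon h\circ dW$. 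Your $H^2\to L^2$ limiting argument (uniform bound $|h^{(n)}|\le1$, It\^o isometry for the stochastic integral, dominated convergence for the drift) is sound and is essentially a more detailed version of the passage to the limit that the paper performs.
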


\begin{proof} 
%Formally we see that 
%\begin{eqnarray*} 
%d \langle \psi_1, \psi_2 \rangle &=& \langle d \psi_1, \psi_2 \rangle + \langle %\psi_1, d \psi_2 \rangle \\
%&=& \langle i \Delta \psi_1 dt + \frac{K}{2}(\psi_2 - \langle \psi_1, \psi_2 %\rangle \psi_1) dt + i \varepsilon \psi_1 \circ d\beta_1, \psi_2 \rangle \\
%&&+ \langle \psi_1, i \Delta \psi_2 dt + \frac{K}{2} (\psi_1 - \langle \psi_2, %\psi_1 \rangle \psi_2) dt + i \varepsilon \psi_2 \circ d\beta_2 \rangle \\
%&=& i \langle \Delta \psi_1, \psi_2 \rangle - i \langle \psi_1, \Delta \psi_2 %\rangle \\ 
%&&+ \frac{K}{2} \left( \lVert \psi_1 \rVert^2 +  \lVert \psi_2 \rVert^2 + \langle %\psi_1, \psi_2 \rangle \langle \psi_1, \psi_2 \rangle + \compconj{\langle \psi_2, %\psi_1 \rangle} \langle \psi_1, \psi_2 \rangle \right) \\
%&&+ i \varepsilon \langle \psi_1, \psi_2 \rangle \circ d\beta_1 - i \varepsilon %\langle \psi_1, \psi_2 \rangle \circ  d\beta_2 \\
%&=& K \left( 1 - \langle \psi_1, \psi_2 \rangle^2 \right) + \sqrt{2} i \varepsilon %\langle \psi_1, \psi_2 \rangle \circ dW
%\end{eqnarray*}
%where $W = \frac{1}{\sqrt{2}}(\beta_1 - \beta_2)$ is a standard Brownian motion. %If we set $h(t) = \langle \psi_1(t), \psi_2(t) \rangle$, $h(t)$ 
%satisfies (\ref{eq:h}) as desired. This computation may be justified as in %\cite{dbd}. 
Let $j=1,2.$ Take $\left(\psi_{j, 0}^{(n)}\right)_{n \in \mathbb{N}}$ 
to be a sequence of functions in $H^2(\R^d, \mathbb{C})$ such that
$\lVert \psi_{j, 0}^{(n)} \rVert = 1 \text{ for all } n \in \mathbb{N}$
and $\lim_{n \rightarrow +\infty} \psi_{j, 0}^{(n)} = \psi_{j, 0}$ 
in $L^2(\R^d, \mathbb{C})$. 
Let $(\phi_1^{(n)}, \phi_2^{(n)})$ be the solution of $(\ref{eq:DSL_N})$ with initial data $(\psi_{1,0}^{(n)}, \psi_{2,0}^{(n)})$. Then $\lVert \phi_j^{(n)}(t)\rVert = 1$ for all $t \geq 0$, $n\in \N$. 
Let $h_n(t)=e^{-\sqrt{2}i\varepsilon W(t)} \langle \phi_1^{(n)}, \phi_2^{(n)}\rangle$.  Note that 
$t \mapsto \langle \phi_1^{(n)}, \phi_2^{(n)}\rangle$  is differentiable, thus 
\begin{eqnarray*}
&&\partial_t \langle \phi_1^{(n)}, \phi_2^{(n)}\rangle \\
&=& \langle \partial_t \phi_1^{(n)}, \phi_2^{(n)}\rangle +
\langle \phi_1^{(n)}, \partial_t \phi_2^{(n)}\rangle \\
&=& 
\langle -i\left(-\Delta \phi_1^{(n)}  
+ \frac{iK}{2}(e^{\sqrt{2}i\varepsilon W} \phi_2^{(n)}
-e^{-\sqrt{2}i\varepsilon W}\langle \phi_1^{(n)}, \phi_2^{(n)} \rangle \phi_1^{(n)})\right), \phi_2^{(n)}\rangle \\
&&+
\langle \phi_1^{(n)}, -i\left(-\Delta \phi_2^{(n)} 
+ \frac{iK}{2}(e^{\sqrt{2}i\varepsilon W} \phi_1^{(n)}
-e^{-\sqrt{2}i\varepsilon W}
\langle \phi_2^{(n)}, \phi_1^{(n)} \rangle \phi_2^{(n)}\right)\rangle \\
&=& 
\frac{K}{2} e^{\sqrt{2}i\varepsilon W} (\|\phi_2^{(n)}\|^2+\|\phi_1^{(n)}\|^2) 
-K e^{-\sqrt{2}i\varepsilon W} (\langle \phi_1^{(n)}, \phi_2^{(n)} \rangle)^2  \\
&=& K(e^{\sqrt{2}i\varepsilon W} -e^{-\sqrt{2}i\varepsilon W} \langle \phi_1^{(n)}, \phi_2^{(n)} \rangle^2).
\end{eqnarray*}
On the other hand, 
$$ d h_n(t) = d(e^{-\sqrt{2} i\varepsilon W(t)}) \langle \phi_1^{(n)}, \phi_2^{(n)} \rangle + e^{-\sqrt{2} i\varepsilon W(t)}\partial_t \langle \phi_1^{(n)}, \phi_2^{(n)}\rangle.$$
Therefore, letting $n \to \infty$ on both sides, we have 
$$ d (e^{-\sqrt{2}i\varepsilon W(t)} \langle \phi_1, \phi_2 \rangle) 
= d(e^{-\sqrt{2} i\varepsilon W(t)}) \langle \phi_1, \phi_2\rangle 
+K(1 -(e^{-\sqrt{2}i\varepsilon W} 
\langle \phi_1, \phi_2 \rangle)^2). $$
Since by It\^{o} formula 
$$ d(e^{-\sqrt{2} i\varepsilon W(t)}) = -\sqrt{2} i\varepsilon e^{-\sqrt{2}i\varepsilon W} dW -2\varepsilon^2 e^{-\sqrt{2}i\varepsilon W} dt,$$ 
and $h(t)= \langle \psi_1(t), \psi_2(t) \rangle = e^{-\sqrt{2}i\varepsilon W(t)} \langle \phi_1, \phi_2 \rangle$, we obtain the stochastic differential equation 
in the statement. 
\end{proof}

\section{Invariant probability measure}

In the previous section, we saw that $h(t) = \langle \psi_1(t), \psi_2(t) \rangle$ satisfies (\ref{eq:h}). 
We shall thus study the stochastic differential equation 
(\ref{eq:h}) in this section. 
Here we recall that our interest is in the case where  
$h(0)= \langle \psi_{1,0}, \psi_{2,0} \rangle$, therefore 
we need only consider initial conditions satisfying $\lvert h(0) \rvert \leq 1$ since  
$$
\lvert h(0) \rvert = \lvert \langle \psi_{1,0}, \psi_{2,0} \rangle \rvert \leq \lVert \psi_{1,0} \rVert \cdot \lVert \psi_{2,0} \rVert = 1.
$$
Moreover the $L^2$ conservation implies that $|h(t)| =| \langle \psi_1(t), \psi_2(t) \rangle| \le 1$.

We shall establish in the following proposition 
the existence and uniqueness of solution of (\ref{eq:h}) 
to identify the solution with $\langle \psi_1(t), \psi_2(t) \rangle$. 
We will see in fact that the following 
proposition describes a key property of solution of (\ref{eq:h}). 

\begin{proposition} \label{prop:comp_dans_B}
Let $\xi$ be $\mathcal{F}_0$-measurable  
such that $\prob(\lvert \xi \rvert \leq 1) = 1$. 
The stochastic differential equation (\ref{eq:h}) with 
$h(0)=\xi$ has a unique, $(\mathcal{F}_t)_{t\ge 0}$-adapted solution $h \in C([0,T], \C)$ a.s. for any $T>0$. 
Furthermore,
$$
\prob \left( \sup_{t \geq 0} \lvert h(t) \rvert \leq 1 \right) = 1.
$$ 
More precisely, 
\begin{itemize}
\item[(i)] 
If $\lvert \xi \rvert = 1$ a.s., then $\lvert h(t) \rvert = 1$ for all $t \geq 0$, a.s.
\item[(ii)] If $\lvert \xi \rvert < 1$ a.s., then $\lvert h(t) \rvert < 1$ for all $t \geq 0$, a.s. 
\end{itemize}
\end{proposition}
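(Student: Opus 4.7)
The plan is to first obtain a local solution by standard Itô theory, then derive a closed pathwise formula for $|h(t)|^2$ by a careful application of Itô's formula, and finally read off the invariance properties of $B$ and $\partial B$ from that formula (which will also give global existence a posteriori).

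First I would rewrite the Stratonovich SDE (\ref{eq:h}) in Itô form. A short Stratonovich-to-Itô computation (viewing $h$ as an $\R^2$-valued process and using $\sigma(h) = \sqrt{2}i\varepsilon h$) gives
\begin{equation*}
dh = \bigl(K(1-h^2) - \varepsilon^2 h\bigr)\,dt + \sqrt{2}\,i\varepsilon\, h\, dW.
\end{equation*}
Since the coefficients are polynomial (hence locally Lipschitz), classical theory yields a unique $(\mathcal{F}_t)$-adapted continuous local solution up to an explosion time $\tau$. The goal is to show $\tau=\infty$ and the two invariance statements.

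The heart of the argument is to compute $d(|h|^2)$ via Itô's formula applied to $h\bar{h}$. Using $d\bar{h} = (K(1-\bar{h}^2) - \varepsilon^2 \bar{h})dt - \sqrt{2}i\varepsilon\bar{h}\,dW$ and $d\langle h,\bar{h}\rangle_t = 2\varepsilon^2|h|^2\,dt$, the stochastic integrals from $\bar{h}\,dh$ and $h\,d\bar{h}$ cancel, and the Itô correction $2\varepsilon^2|h|^2\,dt$ exactly kills the $-2\varepsilon^2|h|^2\,dt$ coming from the drift. What remains is the purely deterministic identity
\begin{equation*}
d|h|^2 = 2K\,\mathrm{Re}(h)\,(1-|h|^2)\,dt.
\end{equation*}
This is the key step and, in my view, the only non-routine calculation in the proof: all noise in the modulus disappears, so for each $\omega$ the function $t\mapsto |h(t,\omega)|^2$ satisfies a pathwise linear ODE in the variable $1-|h|^2$, which integrates to
\begin{equation*}
1-|h(t)|^2 = (1-|h(0)|^2)\exp\!\left(-2K\int_0^t \mathrm{Re}(h(s))\,ds\right),
\end{equation*}
valid for all $t<\tau$.

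From this formula both dichotomies follow immediately. On the event $\{|\xi|=1\}$ we get $1-|h(t)|^2 \equiv 0$, hence $|h(t)|=1$ for all $t<\tau$; on $\{|\xi|<1\}$ the right-hand side is strictly positive, so $|h(t)|<1$ for all $t<\tau$. In particular $\sup_{t<\tau}|h(t)|\le 1$, so the solution remains in the compact set $\compconj{B}$ and cannot explode, whence $\tau=+\infty$ a.s. Uniqueness up to indistinguishability is just the local Itô uniqueness extended by the (now trivial) a priori bound $|h|\le 1$. To handle a general $\mathcal{F}_0$-measurable $\xi$ with $|\xi|\le 1$ a.s., I would split $\Omega$ into $\{|\xi|=1\}$ and $\{|\xi|<1\}$ and apply the two cases separately; by $\mathcal{F}_0$-measurability this partition is compatible with the adapted construction. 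The main obstacle is really just spotting the miraculous cancellation producing the pathwise ODE for $|h|^2$; once that identity is in hand, existence, uniqueness, the a priori bound, and the two invariance statements (i) and (ii) all fall out together.
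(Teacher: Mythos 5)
Your proposal is correct and follows essentially the same route as the paper: local existence from locally Lipschitz coefficients, the key identity $d|h|^2 = 2K\,\mathrm{Re}(h)(1-|h|^2)\,dt$, integration to the exponential formula, and then the dichotomy plus the a priori bound ruling out explosion. The only cosmetic difference is that you verify the cancellation via the It\^o form and quadratic covariation, whereas the paper obtains the same identity directly from the Stratonovich chain rule (the martingale term $2\,\mathrm{Re}[\sqrt{2}i\varepsilon|h|^2]\circ dW$ vanishes because it is purely imaginary).
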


\begin{proof} The use of the classical existence and uniqueness theorem for stochastic differential equations with a locally Lipschitz coefficient yields the local existence of solutions of (\ref{eq:h}) for any initial condition (see for ex. \cite{kunze,oksendal10}). 
By It\^{o} formula, 
\begin{eqnarray*}
d \lvert h(t) \rvert^2 &=& 2 \mathrm{Re}[d h(t) \cdot \compconj{h(t)}] \\
&=& 2 \mathrm{Re}[(K(1 - h(t)^2) dt + \sqrt{2} i \varepsilon h(t) \circ dW) \cdot \compconj{h(t)}] \\
&=& 2K (\mathrm{Re}[\compconj{h(t)}] - \lvert h(t) \rvert^2 \mathrm{Re}[h(t)]) dt \\
&=& 2K \cdot \mathrm{Re}[h(t)] \cdot (1 - \lvert h(t) \rvert^2) dt.
\end{eqnarray*}
Thus
$$
\lvert h(t) \rvert^2 = (\lvert \xi \rvert^2 - 1) e^{-2K\int_0^t \mathrm{Re}[h(s)] ds} + 1, 
$$
which implies (i) and (ii). Accordingly, 
the initial datum $\xi$ under the condition $\prob(\lvert \xi \rvert \leq 1) = 1$ yields that the solution does not explode. \end{proof}

By this results, we may restrict the state space to $B$, $\partial B$ or $\compconj{B}$ and consider (\ref{eq:h}) on either one of these spaces. 

\subsection{Invariant probability measure on $\compconj{B}$}
Fix $\statespace \in \{ B, \partial B, \compconj{B}\}$. 
We denote by $(h^x(t))_{t \geq 0}$ the unique solution of (\ref{eq:h}) with initial condition $h(0) \sim \delta_x$ for $x \in \statespace$.
\vspace{3mm}

We recall that $(P_t^\statespace)_{t \geq 0}$ is the Markov semigroup associated with (\ref{eq:h}), namely, 
$$
P_t^{\statespace} f(x) = \mathbb{E}(f(h^x(t))), \quad t\ge 0
$$
for $f\in B_b (\statespace, \R)$.  

The Markov semigroup $(P_t^{\statespace})_{t \geq 0}$ also acts 
on the set of probability measures by duality. 
If $\mu$ is a probability measure on $\statespace$,  denote $\mu P_t^{\statespace}$ the action of $(P_t^{\statespace})_{t \geq 0}$ on $\mu$. This action is defined by
$$
\int_\statespace f d(\mu P_t^\statespace) = \int_\statespace (P_t^\statespace f) d\mu.
$$
With these definitions, we may write 
$$
\Law(h^{\xi}(t)) = \Law(h(0)) P_t^\statespace= \Law(\xi)P_t^{\statespace}.
$$
In particular $\Law(h^x(t)) = \delta_x P_t^\statespace$.

\begin{definition}
A probability measure $\mu$ on $\statespace$ is called invariant with respect to $(P_t^\statespace)_{t\ge 0}$ when
$$
\mu P_t^\statespace = \mu \text{ for all } t \geq 0.
$$
\end{definition}

\begin{proof}[Proof of Theorem \ref{thm:mesure_inv}]

See Theorem \ref{invariant-probability-measure-on-partial-b} for (i) and see Theorem \ref{invariant-probability-measure-on-b} for (ii). 
Here, we give a proof for (iii) admitting (i) and (ii). 
It is clear that the natural extension to $\compconj{B}$ of the unique invariant probability measure on $\bou{1}$ is an invariant probability measure on $\compconj{B}$. 

Let $\mu$ be an invariant probability measure on $\compconj{B}$. We define two new probability measures $\mu_{B}$ and $\mu_{\partial B}$ by setting
\begin{align*}
\mu_\itr{1} (F) &= \mu (F \cap \itr{1}) \\
\mu_\bou{1} (F) &= \mu (F \cap \bou{1})
\end{align*}
for every Borel set $F \in \mathcal{B}(\compconj{B})$. 
Since $\mu$ is invariant, the measure $\mu_\itr{1}$ and $\mu_\bou{1}$ are the natural extensions to $\compconj{B}$ of finite invariant measures on $\itr{1}$ and $\bou{1}$ respectively. Furthermore $\mu = \mu_\itr{1} + \mu_\bou{1}$. 

By (ii), there is no invariant probability measure on $\itr{1}$, there can be no finite invariant measures on $\itr{1}$ except the null measure. So it must be the case that $\mu_\itr{1} = 0$ and $\mu = \mu_\bou{1}$.
\end{proof}

\subsection{Invariant probability measure on $\partial B$} 

%\begin{definition}
%Consider a diffusion process given by the differential equation
%$$
%dX_t = A_0(X_t) dt + \sum_{i = 1}^m A_i(X_t) \circ dW_i
%$$
%on a $d$-dimensional $C^\infty$ manifold $M^d$ where $A_0, \ldots, A_m$ are %$C^\infty$ vector fields; \red{compare with chapter V of \cite{ikeda81} for the %setup. Ce commentaire pour quel but ?}\\

%We associate with this diffusion a deterministic control system
%\begin{equation} \label{eq:control_system}
%\dot{\varphi}(t) = A_0(\varphi(t)) + \sum_{i = 1}^m u_i(t)A_i(\varphi(t))
%\end{equation}
%where $u = (u_1, \ldots, u_m) \in \mathcal{U}$ and $\mathcal{U}$ is the set of %piecewise continuous functions. \\

%Let us denote by $\varphi(t, x, u)$ the solution of \ref{eq:control_system} at %time $t$ with initial value $x$ under the control action $u$. \\

%Let us denote $\mathcal{O}^+(x) \vcentcolon= \acc{ \varphi(t, x, u) \, \vert \, t %\geq 0, u \in \mathcal{U}}$ the positive orbit of $x \in M$. \\

%We say that $C \subset M$ is a control set if for all $x, y \in C$ we have $x \in %\compconj{\mathcal{O}^+(y)}$. \\

%We say that a control set $C$ is invariant if $\compconj{\mathcal{O}^+(x)} = %\compconj{C}$ for all $x \in C$.

%\end{definition}

\begin{theorem}
\label{invariant-probability-measure-on-partial-b}
Consider the stochastic differential equation (\ref{eq:h}) 
on the restricted state space $\partial B$ and let $(P_t^{\partial B})_{t \geq 0}$ be the associated Markov semigroup. Then, 
\begin{enumerate}
\item $(P_t^{\partial B})_{t \geq 0}$ is strong Feller.
\item $(P_t^{\partial B})_{t \geq 0}$ admits a unique invariant measure $\mu_{K, \varepsilon}$.
\item $\mu_{K, \varepsilon}$ has a smooth density with respect to the Riemannian volume measure on $\partial B$.
\end{enumerate}
\end{theorem}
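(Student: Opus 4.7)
The plan is to reduce the complex-valued SDE~(\ref{eq:h}) restricted to $\partial B$ to an explicit real-valued SDE on the circle $\mathbb{T} = \R/2\pi\Z$, and then invoke standard theory for uniformly elliptic diffusions on a compact manifold. Setting $h = e^{i\theta}$ and using the fact that Stratonovich calculus obeys the ordinary chain rule, I compute $\circ\,dh = ie^{i\theta}\circ d\theta$; substituting into~(\ref{eq:h}) and dividing by $ie^{i\theta}$ yields
\begin{equation*}
d\theta = -iK\bigl(e^{-i\theta} - e^{i\theta}\bigr)\,dt + \sqrt{2}\,\varepsilon\,dW = -2K\sin\theta\,dt + \sqrt{2}\,\varepsilon\,dW,
\end{equation*}
where Stratonovich and It\^o coincide because the diffusion coefficient is constant. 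The generator on $C^2(\mathbb{T})$ is therefore $L = \varepsilon^2 \partial_\theta^2 - 2K\sin\theta\,\partial_\theta$, which is uniformly elliptic with smooth coefficients on the compact manifold $\mathbb{T}$.

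From here items (1) and (2) will follow from standard parabolic regularity. By the classical smoothing theory for uniformly elliptic second-order operators with smooth coefficients, the transition kernel admits a smooth and strictly positive density $p_t(\theta,\theta')$ for every $t>0$. Point (1) is then immediate from the continuity of $\theta \mapsto p_t(\theta,\cdot)$ in $L^1(\mathbb{T})$. For (2), existence of an invariant probability measure is a direct application of Krylov--Bogolyubov on the compact space $\mathbb{T}$; uniqueness is the Doob--Khasminskii theorem, which applies because strong Feller is already established and irreducibility is clear: the strict positivity of $p_t$ implies $P_t^{\partial B}(\theta,U)>0$ for every $t>0$, every $\theta\in\mathbb{T}$, and every nonempty open $U\subset\mathbb{T}$.

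For (3), an independent and more explicit argument is worth giving because the explicit formula for the density is needed later (cf.\ Proposition~\ref{prop:densite}). Any invariant density $\rho$ with respect to the Riemannian volume on $\partial B$ must satisfy the stationary Fokker--Planck equation $L^*\rho = 0$, i.e.
\begin{equation*}
\varepsilon^2 \rho'' + \partial_\theta\!\bigl(2K\sin\theta\,\rho\bigr) = 0 \quad \text{on } \mathbb{T}.
\end{equation*}
One integration reduces this to a first-order linear ODE, which is solved via the integrating factor $e^{-\frac{2K}{\varepsilon^2}\cos\theta}$; the resulting smooth $2\pi$-periodic solution, suitably normalized, is the density of $\mu_{K,\varepsilon}$.

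In my view the only nontrivial step is the coordinate reduction to the circle, where one must be careful with the Stratonovich--It\^o conversion and with the fact that the map $\theta\mapsto e^{i\theta}$ is only locally a chart; however, since the drift $-2K\sin\theta$ and the constant diffusion coefficient are genuinely $2\pi$-periodic, the projection from the real line to $\mathbb{T}$ is well-defined, and once the SDE on $\mathbb{T}$ is in hand the compactness of $\mathbb{T}$ together with uniform ellipticity reduces all three conclusions to routine applications of well-known results.
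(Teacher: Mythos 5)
Your proof is correct, and the coordinate reduction is sound: with $h=e^{i\theta}$ the drift $K(1-h^2)$ is indeed tangent to the circle (its ratio to $ie^{i\theta}$ is the real number $-2\sin\theta$), and since the resulting diffusion coefficient $\sqrt{2}\,\varepsilon$ is constant the Stratonovich and It\^o forms agree, yielding $d\theta=-2K\sin\theta\,dt+\sqrt{2}\,\varepsilon\,dW$ — which matches the generator the paper writes down later in Proposition~\ref{prop:densite}. Your route differs from the paper's: the paper stays with the complex SDE viewed intrinsically as a non-degenerate diffusion on the one-dimensional manifold $\partial B$ (noting only that $\tilde A_1=\sqrt{2}i\varepsilon h$ never vanishes there) and then cites Kliemann for the strong Feller property, Krylov--Bogolyubov plus the Stroock--Varadhan support theorem and a uniqueness criterion of Hairer for (2), and Ichihara--Kunita for the smooth density in (3). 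You instead make the chart explicit and reduce everything to a uniformly elliptic scalar SDE on the torus, so that classical parabolic regularity gives a smooth strictly positive kernel, from which strong Feller, irreducibility, Doob--Khasminskii uniqueness, and the smoothness of the invariant density all follow; your explicit stationary Fokker--Planck computation moreover anticipates the formula of Proposition~\ref{prop:densite}. Two small points to tighten: in (3), before asserting that the invariant density satisfies $L^*\rho=0$ you should note that $\mu=\mu P_t$ together with the smoothness of $p_t$ already forces $\mu$ to have a smooth density $\rho(y)=\int p_t(x,y)\,d\mu(x)$ (this alone proves (3)); and in the Fokker--Planck integration the first integral produces a constant $C$, so you should either argue that periodicity forces $C=0$ or simply exhibit the $C=0$ solution $e^{\frac{2K}{\varepsilon^2}\cos\theta}$ and invoke the uniqueness already proved in (2). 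Your approach is more elementary and self-contained; the paper's is shorter on the page and generalizes to settings where no global chart is available.
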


\begin{proof}
Note that $\partial B = S^1$ is a smooth one-dimensional manifold. 
Let $A_0, A_1 : \mathbb{C} \rightarrow \mathbb{C}$ be defined by
$$
A_0(h) = K(1 - h^2) \text{ and } A_1(h) = \sqrt{2} i \varepsilon h.
$$
Let $\tilde{A}_0$ and $\tilde{A}_1$ be the restrictions to $\partial B$ of $A_0$ and $A_1$ respectively. 
On $\partial B$ the stochastic differential equation (\ref{eq:h}) becomes
$$
d h(t) = \tilde{A}_0(h(t)) dt + \tilde{A}_1(h(t)) \circ dW
$$
with the definitions and notations of the chapter V of \cite{ikeda81}. 
It is clear that $\tilde{A}_1$ is never null on $\partial B$. Since $\partial B$ is a one-dimensional manifold, this implies that (\ref{eq:h}) is non-degenerate on $\partial B$. 
This implies (see \cite{kliemann87}) that the process is strong Feller. 
Since $\partial B$ is compact, the Krylov-Bogolyubov theorem implies that there exists a probability measure that is invariant with respect to $(P_t^{\partial B})_{t \geq 0}$. 
Because the diffusion is non-degenerate on $\partial B$, 
by the Stroock Varahdan support Theorem (page 44 of \cite{ak}), all points of $\partial B$ are accessible for all time $t\ge 0$.
%Let $\mu_1$ and $\mu_2$ be two invariant probabilities. Because every invariant %measure can be decomposed into ergodic invariant measures, we can assume without %loss of generality that $\mu_1$ and $\mu_2$ are ergodic. \\
%\red{Je ne comprends pas pourquoi on suppose ici que $\mu_1$ et $\mu_2$ sont ergodiques, pourtant tu dis que dans la remarque dessous l'unicite implique l'ergodicite}  
%\blue{On suppose ça parce qu'on en a besoin juste après pour appliquer le lemme 4.1 de \cite{kliemann87}. On a le droit de faire ça parce que si il existe deux mesures invariantes disctinctes, il existe deux mesures ergodiques distinctes (c'est le même argument qu'au début de la preuve du corollaire 2.2 dans le papier de Hairer \textit{Asymptotic coupling and a general form of Harris’ theorem with applications to stochastic delay equations}).}
%\red{moi je ne comprends pas si ce n'est pas ecrit comme Hairer: "By the ergodic decomposition, we can assume that $\mu_1$ and $\mu_2$ are both ergodic since any invariant measure can be decomposed into ergodic invariant measures.'' }
%Lemma 4.1 of \cite{kliemann87} now implies that there exist two invariant control %sets $C_1$ and $C_2$ such that $\mu_1$ and $\mu_2$ are the unique invariant %probabilities on $C_1$ and $C_2$ respectively. Because the diffusion is %non-degenerate on $\partial B$, there exists only one control set which is all of %$\partial B$ and hence $\mu_1 = \mu_2$. \\
Hence, corollary 2.7 of \cite{hairer16} (also \cite{dpz}) implies that $(P_t^{\partial B})_{t \geq 0}$ admits a unique invariant, thus ergodic  probability measure. Moreover, 
Theorem 4 of \cite{ichihara74} implies that this invariant probability measure has a smooth density with respect to the Riemannian measure on $\partial B$.
\end{proof}

We define the positive direction of $\partial B$ as, 
$\theta: -\pi \to \pi$ for the polar coordinate of $x \in \partial B \subset \C \cong \R^2$ with $x=(\mathrm{Re}(x),\mathrm{Im}(x))$: $\mathrm{Re}(x)
=\cos\theta$ and $\mathrm{Im}(x)= \sin \theta$.   
In case of an orientable Riemannian manifold, taking the positive orientation, the integral with respect to the Riemannian volume measure equals the integral with respect to the volume element (i.e. differential $1$-form $d\theta$ in our case), see Remark 5.2 in Section 5 of \cite{sakai} for details. 
We may thus calculate the explicit form of the density of $\mu_{K, \varepsilon}$ 
as follows.  

\begin{proposition} \label{prop:densite}
The unique invariant measure $\mu_{K, \varepsilon}$ 
of $(P_t^{\partial B})_{t \geq 0}$ is given by
$$
\mu_{K, \varepsilon}(A) =\frac{1}{\Gamma_{K, \varepsilon}} \int_{(\cos\theta, \sin\theta) \in A} 
\exp\left({\frac{2K}{\varepsilon^2}\cos \theta}\right)d\theta,
$$
where $A$ is a measurable set with respect to the Riemannian measure on $\partial B$, 
%$\{(U_j, \varphi_j)\}_{j}$ is an atlas of $\partial B$ such %that $\{U_j\}_j$ is a covering of $\partial B$, 
and $\Gamma_{K, \varepsilon}$ is the normalizing constant. i.e. $$\Gamma_{K, \varepsilon}=\int_{-\pi}^{\pi} 
\exp\left({\frac{2K}{\varepsilon^2}\cos \theta}  \right)d\theta.$$
\end{proposition}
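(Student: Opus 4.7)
The plan is to pull the SDE (\ref{eq:h}) back from $\partial B$ to the circle coordinate $\theta\in(-\pi,\pi]$ via $h=e^{i\theta}$, obtain a real one-dimensional diffusion for $\theta$ with constant diffusion coefficient, and then solve the stationary Fokker--Planck equation explicitly. Since Theorem \ref{invariant-probability-measure-on-partial-b} already guarantees the existence and uniqueness of $\mu_{K,\varepsilon}$ and the smoothness of its density with respect to the Riemannian volume $d\theta$, it suffices to identify a smooth $2\pi$-periodic probability density that annihilates the adjoint of the generator.

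First I would compute the Stratonovich SDE for $\theta$. Writing $h=e^{i\theta}$ and using the Stratonovich chain rule $dh=ie^{i\theta}\circ d\theta$, the relation
\begin{equation*}
ie^{i\theta}\circ d\theta = K(1-e^{2i\theta})\,dt + \sqrt{2}i\varepsilon e^{i\theta}\circ dW
\end{equation*}
becomes, after dividing by $ie^{i\theta}$ and simplifying $(e^{-i\theta}-e^{i\theta})/i=-2\sin\theta$,
\begin{equation*}
d\theta = -2K\sin\theta\,dt + \sqrt{2}\,\varepsilon\,dW.
\end{equation*}
Because the diffusion coefficient $\sqrt{2}\,\varepsilon$ is constant, the Stratonovich and It\^o forms coincide, so the generator of the $\theta$-diffusion is
\begin{equation*}
\mathcal{L} = -2K\sin\theta\,\frac{\partial}{\partial\theta} + \varepsilon^{2}\frac{\partial^{2}}{\partial\theta^{2}}.
\end{equation*}

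Next I would look for an invariant density $p(\theta)$, smooth and $2\pi$-periodic, by solving $\mathcal{L}^{*}p=0$, i.e.
\begin{equation*}
\frac{d}{d\theta}\bigl(2K\sin\theta\cdot p(\theta)\bigr) + \varepsilon^{2}\frac{d^{2}p}{d\theta^{2}} = 0.
\end{equation*}
Integrating once yields $\varepsilon^{2}p'(\theta)+2K\sin\theta\,p(\theta)=C$. On the circle the probability flux $C$ must vanish (alternatively, the unique periodic solution of the full equation has $C=0$), giving the first-order linear ODE $p'/p = -(2K/\varepsilon^{2})\sin\theta$, whose $2\pi$-periodic solutions are
\begin{equation*}
p(\theta) = \frac{1}{\Gamma_{K,\varepsilon}}\exp\!\Bigl(\frac{2K}{\varepsilon^{2}}\cos\theta\Bigr),
\end{equation*}
with $\Gamma_{K,\varepsilon}=\int_{-\pi}^{\pi}\exp\bigl((2K/\varepsilon^{2})\cos\theta\bigr)d\theta$ ensuring normalization. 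This defines a smooth probability measure on $\partial B$ and, by construction, is invariant; by the uniqueness part of Theorem \ref{invariant-probability-measure-on-partial-b} it must coincide with $\mu_{K,\varepsilon}$, giving the stated formula after pushing forward via $\theta\mapsto(\cos\theta,\sin\theta)$.

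The only delicate point is the justification that the constant of integration $C$ in the once-integrated Fokker--Planck equation vanishes. The cleanest route is to observe that the general solution of $\varepsilon^{2}p'+2K\sin\theta\,p=C$ is
\begin{equation*}
p(\theta) = e^{(2K/\varepsilon^{2})\cos\theta}\Bigl(p(-\pi)e^{-(2K/\varepsilon^{2})\cos(-\pi)} + \tfrac{C}{\varepsilon^{2}}\!\int_{-\pi}^{\theta}e^{-(2K/\varepsilon^{2})\cos s}\,ds\Bigr),
\end{equation*}
and periodicity $p(\pi)=p(-\pi)$ together with $\cos(-\pi)=\cos\pi$ forces the bracketed integral over the full period to be zero, hence $C=0$. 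Once this is verified, the remaining computations are routine, and the identification with $\mu_{K,\varepsilon}$ is immediate from uniqueness.
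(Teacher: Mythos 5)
Your proposal is correct, and it reaches the same polar-coordinate generator $\mathcal{L}=-2K\sin\theta\,\partial_\theta+\varepsilon^2\partial_\theta^2$ that the paper uses, but the two arguments then diverge in a dual way. The paper simply states this generator without derivation and \emph{verifies} invariance of the candidate density by checking $\int_{-\pi}^{\pi}Lf(\theta)\,e^{(2K/\varepsilon^2)\cos\theta}\,d\theta=0$ for all $f\in C_b^2$ via a single integration by parts, then invokes uniqueness. You instead \emph{derive} the density: you first obtain the $\theta$-SDE $d\theta=-2K\sin\theta\,dt+\sqrt{2}\,\varepsilon\,dW$ from the Stratonovich chain rule (a step the paper leaves entirely implicit, and which is worth recording since it also explains why the drift is real and the circle is preserved), and then solve the stationary Fokker--Planck equation $\mathcal{L}^*p=0$, with the zero-flux constant $C=0$ forced by periodicity --- note that, strictly speaking, periodicity forces $C\int_{-\pi}^{\pi}e^{-(2K/\varepsilon^2)\cos s}\,ds=0$ and it is the strict positivity of that integral that gives $C=0$. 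Both routes rest on the same two implicit facts (that a smooth periodic density annihilated by $\mathcal{L}^*$, equivalently integrating $Lf$ to zero, yields a genuinely invariant measure for this non-degenerate compact diffusion, and that uniqueness from Theorem \ref{invariant-probability-measure-on-partial-b} then identifies it with $\mu_{K,\varepsilon}$), so neither is more rigorous than the other; your version is more constructive and self-contained, while the paper's verification is shorter.
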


\begin{proof}
Remark that the generator of Markov semigroup $(P_t^{\partial B})_{t \geq 0}$ 
is given by, in polar coordinates, 
$$
Lf(\theta) = (-2K \sin \theta) \frac{\partial f}{\partial \theta}(\theta) + \varepsilon^2 \frac{\partial^2 f}{\partial \theta^2}(\theta).
$$
for any $f \in C_b^2(\partial B, \R)$. It is thus enough to show that for any $f \in C_b^2(\partial B, \R)$,
$$
\int_{-\pi}^{\pi} Lf (\theta) 
\exp\left({\frac{2K}{\varepsilon^2}\cos \theta}\right) d\theta = 0.
$$
This is in fact immediate, by the integration by parts,
\begin{eqnarray*}
&& \int_{-\pi}^{\pi} Lf (\theta) \exp\left({\frac{2K}{\varepsilon^2}\cos \theta}\right) d\theta \\
&=& 
\int_{\pi}^{\pi} 
\Big[(-2K \sin \theta) \frac{\partial f}{\partial \theta}(\theta) + \varepsilon^2 \frac{\partial^2 f}{\partial \theta^2}(\theta) \Big]\exp\left({\frac{2K}{\varepsilon^2}\cos \theta}\right) d\theta \\
&=& \int_{-\pi}^{\pi} (-2K \sin\theta) \frac{\partial f}{\partial \theta}(\theta) \exp\left({\frac{2K}{\varepsilon^2}\cos \theta}\right) d\theta \\ 
&& \hspace{3mm}+ \Big[\varepsilon^2 \frac{\partial f}{\partial \theta}(\theta)\exp\left({\frac{2K}{\varepsilon^2} \cos \theta}\right)\Big]_{\theta=-\pi}^{\theta= \pi} \\
&& \hspace{3mm}- \int_{-\pi}^{\pi} \varepsilon^2 \frac{\partial f}{\partial \theta}(\theta) \frac{2K}{\varepsilon^2}(- \sin \theta) \exp \left( \frac{2K}{\varepsilon^2} \cos \theta \right) d\theta \\
&=& 0. 
\end{eqnarray*}
\end{proof}

We need the following proposition for later use. 

\begin{proposition} \label{prop:esp_re} 
For all $K, \varepsilon > 0$ we have 
$\E_{\mu_{K, \varepsilon}}[\mathrm{Re} \, \cdot] > 0$.
\end{proposition}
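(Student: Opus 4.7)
The plan is to exploit the explicit density from Proposition \ref{prop:densite} and exhibit the expectation as a strictly positive integral via an elementary symmetry trick. Writing $\alpha = \frac{2K}{\varepsilon^2} > 0$, we have
$$
\E_{\mu_{K,\varepsilon}}[\mathrm{Re}\,\cdot] = \frac{1}{\Gamma_{K,\varepsilon}} \int_{-\pi}^{\pi} \cos\theta \, e^{\alpha \cos\theta}\, d\theta.
$$
Since the integrand is even in $\theta$, this reduces to $\frac{2}{\Gamma_{K,\varepsilon}} \int_0^{\pi} \cos\theta \, e^{\alpha \cos\theta}\, d\theta$, and it suffices to prove this last integral is strictly positive.

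The key step is to split the interval at $\pi/2$ (above which $\cos\theta$ changes sign) and to fold $[\pi/2,\pi]$ onto $[0,\pi/2]$ via the substitution $\phi = \pi - \theta$, under which $\cos\theta = -\cos\phi$. This yields
$$
\int_{\pi/2}^{\pi} \cos\theta\, e^{\alpha\cos\theta}\,d\theta = -\int_0^{\pi/2} \cos\phi\, e^{-\alpha\cos\phi}\,d\phi,
$$
so that, after combining the two pieces,
$$
\int_0^{\pi} \cos\theta\, e^{\alpha\cos\theta}\,d\theta = \int_0^{\pi/2} \cos\theta\,\bigl(e^{\alpha\cos\theta} - e^{-\alpha\cos\theta}\bigr)\,d\theta = 2\int_0^{\pi/2} \cos\theta\, \sinh(\alpha\cos\theta)\,d\theta.
$$

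On $(0,\pi/2)$ we have $\cos\theta > 0$ and $\alpha > 0$, hence $\sinh(\alpha\cos\theta) > 0$ and the integrand is strictly positive on a set of positive Lebesgue measure. The integral is therefore strictly positive, and dividing by the positive normalizing constant $\Gamma_{K,\varepsilon}$ gives the claim. There is no genuine obstacle here; the whole content is the parity observation plus the reflection $\theta \mapsto \pi - \theta$, which turns the signed integral into a manifestly positive one.
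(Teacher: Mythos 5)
Your proof is correct, but it takes a different elementary route from the paper. You establish positivity of $\int_{-\pi}^{\pi}\cos\theta\,e^{\alpha\cos\theta}\,d\theta$ (with $\alpha=\tfrac{2K}{\varepsilon^2}$) by the reflection $\theta\mapsto\pi-\theta$, reducing it to $4\int_0^{\pi/2}\cos\theta\,\sinh(\alpha\cos\theta)\,d\theta$, whose integrand is manifestly positive; the substitution and sign bookkeeping check out. The paper instead integrates by parts once, using $\cos\theta\,d\theta=d(\sin\theta)$ and the vanishing of the boundary term at $\theta=\pm\pi$, to get
\begin{equation*}
\E_{\mu_{K,\varepsilon}}[\mathrm{Re}\,\cdot]=\frac{2K}{\varepsilon^2\,\Gamma_{K,\varepsilon}}\int_{-\pi}^{\pi}(\sin\theta)^2\,e^{\frac{2K}{\varepsilon^2}\cos\theta}\,d\theta>0.
\end{equation*}
Both arguments are one-step calculus manipulations of the same explicit integral and both yield strict positivity. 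The integration-by-parts identity has the mild advantage of expressing the expectation as $\alpha$ times a weighted average of $\sin^2\theta$, which is a clean quantitative formula (it also makes the small-$\alpha$ and large-$\alpha$ asymptotics transparent), while your reflection argument is arguably the more self-evidently positive form. Either is acceptable; there is no gap.
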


\begin{proof} Since we know explicitly the density 
from Proposition \ref{prop:densite},  
\begin{eqnarray*}
\E_{\mu_{K, \varepsilon}}[\mathrm{Re} \, \cdot ] 
&=& \frac{1}{\Gamma_{K, \varepsilon}} 
\int_{-\pi}^\pi ( \cos \theta) 
e^{\frac{2 K}{\varepsilon^2}\cos \theta} d\theta \\
&=& \frac{1}{\Gamma_{K, \varepsilon}} 
\left\{[(\sin \theta) e^{\frac{2 K}{\varepsilon^2} \cos \theta}]_{\theta = -\pi}^{\theta =\pi} 
- \int_{- \pi}^{\pi} (\sin \theta) 
\frac{-2K \sin \theta}{\varepsilon^2} e^{\frac{2K}{\varepsilon^2} \cos \theta} d\theta\right\} \\
&=& \frac{2K}{\varepsilon^2 \Gamma_{K, \varepsilon}} \int_{-\pi}^{\pi} (\sin \theta)^2 
e^{\frac{2K}{\varepsilon^2} \cos \theta} d\theta > 0.
\end{eqnarray*}
\end{proof}

\subsection{Large deviation principle for the invariant probability measure}

We will now deduce a large deviation principle using the explicit formula in Proposition \ref{prop:densite} for the invariant probability measure.

\begin{lemma}
For any Borel set $F \in \mathcal{B}(\partial B)$, let 
$$ m_{K, \varepsilon}(F)= \int_{(\cos\theta, \sin\theta) \in F} \exp\left(\frac{2K}{\varepsilon^2}\cos \theta\right) d\theta. 
$$
Then we have
\begin{eqnarray*}
&& 2 \sup_{(\cos\theta, \sin\theta) \in \mathring{F}} 
\cos \theta \le \liminf_{\varepsilon^2/K \rightarrow 0} \frac{\varepsilon^2}{K} \log m_{K, \varepsilon}(F) \\
&& \hspace{5mm}\le \limsup_{\varepsilon^2/K \rightarrow 0} \frac{\varepsilon^2}{K} \log m_{K, \varepsilon}(F) \le 2 \sup_{(\cos\theta, \sin\theta) \in \bar{F}} \cos \theta. 
\end{eqnarray*}
\end{lemma}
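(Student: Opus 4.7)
The plan is to recognize this as a standard Laplace asymptotics statement and to treat the two inequalities separately. Setting $\lambda = K/\varepsilon^2$, so that $\varepsilon^2/K \to 0$ corresponds to $\lambda \to \infty$, I would rewrite
$$m_{K,\varepsilon}(F) = \int_{\{\theta\in(-\pi,\pi]\,:\,(\cos\theta,\sin\theta)\in F\}} e^{2\lambda\cos\theta}\,d\theta,$$
so the task reduces to controlling $\frac{1}{\lambda}\log m_{K,\varepsilon}(F)$ as $\lambda\to\infty$.

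For the upper bound, I would set $M = \sup_{(\cos\theta,\sin\theta)\in\bar F}\cos\theta$. Since $F\subset\bar F$, on the domain of integration we have $\cos\theta\le M$, and this domain has Lebesgue measure at most $2\pi$, giving
$$m_{K,\varepsilon}(F) \le 2\pi\,e^{2\lambda M}.$$
Taking $\frac{1}{\lambda}\log$ and sending $\lambda\to\infty$ kills the $\log(2\pi)/\lambda$ term and yields $\limsup \le 2M$. The degenerate case where the preimage has Lebesgue measure zero makes $m_{K,\varepsilon}(F)=0$, and the bound holds trivially with the convention $\log 0 = -\infty$.

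For the lower bound I would set $M^{*} = \sup_{(\cos\theta,\sin\theta)\in\mathring F}\cos\theta$, using the convention $M^{*} = -\infty$ when $\mathring F$ is empty (in which case the inequality is vacuous). Fix $\eta > 0$ and pick $\theta^{*}$ with $(\cos\theta^{*},\sin\theta^{*}) \in \mathring F$ and $\cos\theta^{*} > M^{*} - \eta$. Continuity of the parametrization $\theta\mapsto(\cos\theta,\sin\theta)$ together with openness of $\mathring F$ produces an open interval around $\theta^{*}$ whose image lies in $\mathring F$, and shrinking it using continuity of cosine I may additionally ensure $\cos\theta > M^{*} - 2\eta$ on this interval. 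Writing $\ell$ for its length, this gives
$$m_{K,\varepsilon}(F) \ge \ell\, e^{2\lambda(M^{*}-2\eta)},$$
hence $\frac{1}{\lambda}\log m_{K,\varepsilon}(F) \ge \frac{\log\ell}{\lambda} + 2(M^{*} - 2\eta) \to 2(M^{*} - 2\eta)$ as $\lambda\to\infty$. Letting $\eta\to 0$ completes the lower bound. The argument is entirely routine Laplace asymptotics; the only care needed is in the bookkeeping between the topology on $\partial B$ and the parameter space $(-\pi,\pi]$, where I use that $\theta\mapsto(\cos\theta,\sin\theta)$ is continuous so interior points of $F$ pull back to interior points of the parameter domain. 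I do not expect any serious obstacle.
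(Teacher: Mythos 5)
Your proposal is correct and follows essentially the same route as the paper: the upper bound by replacing $\cos\theta$ with its supremum over $\bar F$ and absorbing the $O(\log(2\pi)/\lambda)$ term, and the lower bound by localizing to a small open neighborhood of a near-maximizer in $\mathring F$ and letting $\eta\to 0$. Your explicit treatment of the degenerate cases ($F$ of measure zero, $\mathring F$ empty) is slightly more careful than the paper's, but the argument is the same standard Laplace asymptotics.
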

\begin{proof}
Let $F \in \mathcal{B}(\partial B)$ be an arbitrary fixed Borel set. We have 
\begin{eqnarray*}
\frac{\varepsilon^2}{K} \log m_{K, \varepsilon}(F) 
&=& \frac{\varepsilon^2}{K} \log \int_{(\cos\theta, \sin\theta) \in F} \exp\left(\frac{2K}{\varepsilon^2}\cos \theta\right) d\theta \\
&\leq& \frac{\varepsilon^2}{K} \log 
\int_{(\cos\theta, \sin\theta)\in F} \exp\left(\frac{2K}{\varepsilon^2}
\sup_{(\cos\theta, \sin\theta)\in \bar{F}} \cos \theta \right) d\theta\\
&\leq& \frac{\varepsilon^2}{K} \log \acc{ \exp\left(\frac{2K}{\varepsilon^2}\sup_{(\cos\theta, \sin\theta)\in \bar{F}} \cos \theta \right) \int_{(\cos\theta, \sin\theta) \in F}
1 d\theta }\\
&=& \acc{\frac{\varepsilon^2}{K} \log \int_{(\cos\theta, \sin\theta) \in F} 1 d\theta} + 2 \sup_{(\cos\theta, \sin\theta)\in \bar{F}} \cos \theta.
\end{eqnarray*}
Hence
$$
\limsup_{\varepsilon^2/K \rightarrow 0} \frac{\varepsilon^2}{K} \log m_{K, \varepsilon}(F) \le 2 \sup_{ (\cos\theta, \sin\theta)\in \bar{F}} \cos \theta.
$$
Let $\eta > 0$. There exists $\theta_0$ such that $(\cos\theta_0, \sin\theta_0)\in \mathring{F}$ with 
$\cos \theta_0 \geq \sup_{(\cos\theta, \sin\theta)\in \mathring{F}} \cos \theta - \eta$.  
Because $\cos$ is continuous, there exists an open set $U \subset \mathring{F}$ such that $(\cos\theta_0, \sin\theta_0) \in U$ and that $\cos \theta \geq \sup_{(\cos\theta, \sin\theta) \in \mathring{F}} \cos \theta - 2 \eta$ for all  
$\theta$ such that $(\cos\theta, \sin\theta)\in U$. 
We have
\begin{eqnarray*}
\frac{\varepsilon^2}{K} \log m_{K, \varepsilon}(F)
&=& \frac{\varepsilon^2}{K} \log \int_{(\cos\theta, \sin\theta) \in F} \exp\left(\frac{2K}{\varepsilon^2}\cos \theta\right) d\theta \\
&\geq& \frac{\varepsilon^2}{K} \log \int_{(\cos\theta, \sin\theta) \in U} \exp\left(\frac{2K}{\varepsilon^2}\cos \theta\right) d\theta \\
&\geq& \frac{\varepsilon^2}{K} \log \int_{(\cos\theta, \sin\theta) \in U} \exp\left(\frac{2K}{\varepsilon^2} (\sup_{(\cos\theta, \sin\theta) \in \mathring{F}} 
\cos \theta - 2 \eta)\right) d\theta \\
%&\geq& \frac{\varepsilon^2}{K} \log \acc{\exp\left(\frac{2K}{\varepsilon^2} %(\sup_{(\sin\theta, \cos\theta) \in \mathring{F}} \cos \theta - 2 \eta)\right) %\int_{(\cos\theta, \sin\theta) \in U} 1 d\theta} \\
&=& \acc{\frac{\varepsilon^2}{K} \log \int_{(\cos\theta, \sin\theta) \in U} 1 d\theta} + 2\sup_{(\cos\theta, \sin\theta)\in \mathring{F}} \cos \theta - 4 \eta.
\end{eqnarray*}
Hence
$$
2 \sup_{(\cos\theta, \sin\theta) \in \mathring{F}} \cos \theta - 4 \eta \le \liminf_{\varepsilon^2/K \rightarrow 0} \frac{\varepsilon^2}{K} \log m_{K, \varepsilon}(F).
$$
Letting $\eta \rightarrow 0$ completes our proof.
\end{proof}

\begin{theorem}[Large deviation principle] \label{large_deviation_principle}
We have
$$
- \inf_{x \in \mathring{F}} I(x) \le \liminf_{\varepsilon^2/K \rightarrow 0} \frac{\varepsilon^2}{K} \log \mu_{K, \varepsilon}(F) \le \limsup_{\varepsilon^2/K \rightarrow 0} \frac{\varepsilon^2}{K} \log \mu_{K, \varepsilon}(F) \le - \inf_{x \in \compconj{F}} I(x)
$$
for all Borel sets $F \in \mathcal{B}(\partial B)$ where
$$
I(x) \defeq 2(1 - \Re[x]).
$$
\end{theorem}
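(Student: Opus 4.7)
The plan is to reduce the statement directly to the preceding lemma by exploiting the fact that $\mu_{K,\varepsilon}(F) = m_{K,\varepsilon}(F)/\Gamma_{K,\varepsilon}$, where by Proposition \ref{prop:densite} the normalizing constant is $\Gamma_{K,\varepsilon} = m_{K,\varepsilon}(\partial B)$. Taking logarithms converts the quotient into a difference, so
$$\frac{\varepsilon^2}{K} \log \mu_{K,\varepsilon}(F) = \frac{\varepsilon^2}{K}\log m_{K,\varepsilon}(F) - \frac{\varepsilon^2}{K}\log m_{K,\varepsilon}(\partial B),$$
and the large deviation principle will follow by controlling each piece separately with the lemma.

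First I would apply the lemma to the full circle $F = \partial B$. Since $\partial B$ is both open and closed in itself and $\sup_{x \in \partial B}\Re[x] = 1$ (attained at $\theta = 0$), the lemma gives
$$\lim_{\varepsilon^2/K \to 0} \frac{\varepsilon^2}{K} \log \Gamma_{K,\varepsilon} = 2.$$
Next, for an arbitrary Borel set $F \in \mathcal{B}(\partial B)$, I would apply the lemma's upper bound to $m_{K,\varepsilon}(F)$ to obtain
$$\limsup_{\varepsilon^2/K \to 0} \frac{\varepsilon^2}{K} \log \mu_{K,\varepsilon}(F) \le 2\sup_{x \in \bar F}\Re[x] - 2 = -\inf_{x \in \bar F}\bigl(2(1-\Re[x])\bigr) = -\inf_{x \in \bar F}I(x),$$
and symmetrically the lower bound of the lemma to $m_{K,\varepsilon}(F)$ to obtain
$$\liminf_{\varepsilon^2/K \to 0} \frac{\varepsilon^2}{K} \log \mu_{K,\varepsilon}(F) \ge 2\sup_{x \in \mathring F}\Re[x] - 2 = -\inf_{x \in \mathring F}I(x).$$
Combined, these inequalities are exactly the claimed large deviation bounds.

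There is essentially no obstacle here: the entire analytical content (Laplace-type asymptotics for the logarithm of the exponential integral) has been absorbed into the preceding lemma, so the proof of the theorem is a short bookkeeping computation. The only mild point worth stating cleanly is the identification of $\Gamma_{K,\varepsilon}$ with $m_{K,\varepsilon}(\partial B)$, which is immediate from the explicit density in Proposition \ref{prop:densite}, together with the elementary identity $\sup_{x\in\partial B}\Re[x] = 1$ that pins down the limit of $(\varepsilon^2/K)\log\Gamma_{K,\varepsilon}$ to the value $2$.
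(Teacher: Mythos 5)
Your proposal is correct and follows essentially the same route as the paper: write $\frac{\varepsilon^2}{K}\log\mu_{K,\varepsilon}(F)$ as the difference $\frac{\varepsilon^2}{K}\log m_{K,\varepsilon}(F)-\frac{\varepsilon^2}{K}\log m_{K,\varepsilon}(\partial B)$, control the normalizing term by applying the lemma to $F=\partial B$ (where the two bounds coincide and give the limit $2$), and apply the lemma's upper and lower bounds to the numerator. The paper phrases the normalization step via the $\limsup$/$\liminf$ inequalities for differences rather than isolating the limit of $\Gamma_{K,\varepsilon}$ first, but this is only a cosmetic difference.
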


\begin{proof}
Let $F \in \mathcal{B}(\partial B)$ be an arbitrary fixed Borel set. Notice
$$
\frac{\varepsilon^2}{K} \log \mu_{K, \varepsilon}(F) = \frac{\varepsilon^2}{K} \log m_{K, \varepsilon}(F) - \frac{\varepsilon^2}{K} \log m_{K, \varepsilon}(\partial B).
$$
Therefore,
\begin{align*}
\limsup_{\varepsilon^2/K \rightarrow 0} \frac{\varepsilon^2}{K} \log \mu_{K, \varepsilon}(F)
&\le \limsup_{\varepsilon^2/K \rightarrow 0} \frac{\varepsilon^2}{K} \log m_{K, \varepsilon}(F) - \liminf_{\varepsilon^2/K \rightarrow 0} \frac{\varepsilon^2}{K} \log m_{K, \varepsilon}(\partial B) \\
&\le 2 ( \sup_{(\cos\theta, \sin\theta)\in \bar{F}} \cos \theta - 1)
\end{align*}
where we used the previous lemma for the last inequality. 
Similarly,
\begin{align*}
\liminf_{\varepsilon^2/K \rightarrow 0} \frac{\varepsilon^2}{K} \log \mu_{K, \varepsilon}(F)
&\ge \liminf_{\varepsilon^2/K \rightarrow 0} \frac{\varepsilon^2}{K} \log m_{K, \varepsilon}(F) - \limsup_{\varepsilon^2/K \rightarrow 0} \frac{\varepsilon^2}{K} \log m_{K, \varepsilon}(\partial B) \\
&\ge 2 ( \sup_{(\cos\theta, \sin\theta)\in \mathring{F}} \cos \theta- 1 ).
\end{align*}

Hence
\begin{eqnarray*}
&& 2 \left(\sup_{(\cos\theta, \sin\theta) \in \mathring{F}} \cos \theta  -1\right) 
\le \liminf_{\varepsilon^2/K \rightarrow 0} \frac{\varepsilon^2}{K} \log \mu_{K, \varepsilon}(F) \\
&& \hspace{5mm} \le \limsup_{\varepsilon^2/K \rightarrow 0} \frac{\varepsilon^2}{K} \log \mu_{K, \varepsilon}(F) \le 2\left( \sup_{(\cos\theta, \sin\theta) \in \bar{F}} \cos \theta -1\right)
\end{eqnarray*}
which is equivalent to the desired inequalities.
\end{proof}

Finally, for the sake of completeness, we give a proof of Proposition 
\ref{prop:conv_eps_to_zero}.
\vspace{3mm}

\begin{proof}[Proof of Proposition \ref{prop:conv_eps_to_zero}]
It is enough to show that $\mu_{K, \varepsilon} \rightarrow \delta_1$ in distribution when $\frac{\varepsilon^2}{K} \rightarrow 0$.
Let $\varphi \in C_b(\partial B)$ be arbitrary fixed.
Let $\delta > 0$ be given. 
There exists $U \subset \partial B$ such that $1 \in U$ and that $\lvert \varphi(x) - \varphi(1) \rvert \leq \delta$ for all $x \in U$. It follows from the previous theorem that $\lim_{\varepsilon^2/K \rightarrow 0} \mu_{K, \varepsilon}(U^c) = 0$. In particular, there exists $\eta > 0$ such that $0 \le \mu_{K, \varepsilon}(U^c) \le \delta$ if $0 < \frac{\varepsilon^2}{K} < \eta$. And in that case
\begin{eqnarray*}
&&\lvert \E_{\mu_{K, \varepsilon}}(\varphi) - \varphi(1) \rvert \\
&\le& \lvert \E_{\mu_{K, \varepsilon}}(\varphi \textbf{1}_{U^c})\rvert + \lvert \E_{\mu_{K, \varepsilon}}(\varphi \textbf{1}_U) - \varphi(1) \mu_{K, \varepsilon}(U) \rvert + \lvert \varphi(1) \mu_{K, \varepsilon}(U) - \varphi(1) \rvert \\
&\le& \lVert \varphi \rVert_{L^\infty} \mu_{K, \varepsilon}(U^c) + \sup_{x \in U}\lvert \varphi(x) - \varphi(1) \rvert \mu_{K, \varepsilon}(U) + \lvert \varphi(1) \rvert \mu_{K, \varepsilon}(U^c) \\
&\le& \lVert \varphi \rVert_{L^\infty} \cdot \delta + \delta + \lVert \varphi \rVert_{L^\infty} \cdot \delta. 
\end{eqnarray*}
Thus $\E_{\mu_{K, \varepsilon}}(\varphi) \rightarrow \varphi(1)$ when $\frac{\varepsilon^2}{K} \rightarrow 0$. 
Because $\varphi \in C_b(\partial B)$ is arbitrary, this proves the desired convergence in distribution.
\end{proof}

\section{Convergence to the invariant probability measure}

\subsection{Convergence with initial condition in $B$}
In this subsection we investigate the asymptotic behavior 
of the law of solution with the initial data $x \in B$. 
To prove Theorem \ref{thm:conv_interieur}, it suffices 
to compare two solutions established in Proposition  \ref{prop:comp_dans_B}, i.e. the solution of (\ref{eq:h}) 
with initial data $x\in B$ and the stationary solution of (\ref{eq:h}) whose initial distribution is $\mu_{K, \varepsilon}$, unique invariant ergodic measure on $\partial B$. 

\begin{proposition} \label{prop:conv_hg}
Let $h^x(t)$ and $g(t)$ be two solutions of (\ref{eq:h}) such that $h^x(0) \sim \delta_x$ and 
$g(0) \sim \mu_{K, \varepsilon}$ for a fixed $x \in B$. 
Then, we have 
$$\lvert h^x(t) - g(t) \rvert \xrightarrow{t \rightarrow +\infty} 0$$ 
almost surely.
\end{proposition}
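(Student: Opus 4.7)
The plan is to couple $h^x$ with the stationary solution $g$ through their common Brownian motion $W$ and study the difference $z(t) \defeq h^x(t) - g(t)$. Subtracting the two copies of (\ref{eq:h}) one gets
\begin{equation*}
dz = -Kz(h^x + g)\, dt + \sqrt{2}\, i\varepsilon\, z \circ dW,
\end{equation*}
a Stratonovich SDE that is linear in $z$. The noise coefficient is a purely imaginary multiple of $z$, so it is tangential to the modulus: applying the Stratonovich chain rule (which obeys the usual Leibniz rule) to $|z|^2 = z\bar{z}$, the stochastic contribution drops out and we are left with
\begin{equation*}
d|z|^2 = -2K\, |z|^2\, \mathrm{Re}(h^x + g)\, dt,
\end{equation*}
which integrates pathwise to
\begin{equation*}
|h^x(t) - g(t)|^2 = |x - g(0)|^2\, \exp\!\left( -2K \int_0^t \mathrm{Re}(h^x(s) + g(s))\, ds \right).
\end{equation*}

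It is therefore enough to show that the exponent tends to $-\infty$ almost surely, which I would do by treating each summand separately. For $h^x$, the identity $1 - |h^x(t)|^2 = (1 - |x|^2)\exp\!\big({-2K \int_0^t \mathrm{Re}(h^x(s))\, ds}\big)$ established inside the proof of Proposition \ref{prop:comp_dans_B}, combined with $|h^x(t)| \le 1$, gives the a priori lower bound
\begin{equation*}
\int_0^t \mathrm{Re}(h^x(s))\, ds \geq \tfrac{1}{2K}\log(1 - |x|^2),
\end{equation*}
which is finite and uniform in $t$. For $g$, which by construction is a stationary process with law $\mu_{K,\varepsilon}$, and since $\mu_{K,\varepsilon}$ is ergodic by Theorem \ref{invariant-probability-measure-on-partial-b}, the continuous-time Birkhoff ergodic theorem gives
\begin{equation*}
\frac{1}{t} \int_0^t \mathrm{Re}(g(s))\, ds \xrightarrow[t \to +\infty]{} \E_{\mu_{K, \varepsilon}}[\mathrm{Re}\,\cdot] \quad \text{a.s.},
\end{equation*}
and Proposition \ref{prop:esp_re} tells us that this limit is strictly positive. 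Adding the two bounds, $\int_0^t \mathrm{Re}(h^x(s) + g(s))\, ds$ grows at least linearly in $t$ almost surely, so $|h^x(t) - g(t)|^2$ decays exponentially to zero, which is the desired conclusion.

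The main technical obstacle is the Stratonovich computation of $d|z|^2$ and the cancellation of the noise: it is precisely this cancellation — a consequence of the specific form $i\varepsilon\, \psi_j \circ d\beta_j$ chosen in the model, so that the noise only rotates the phase — that makes the coupling work. Were the noise real or otherwise non-tangential, $|z|^2$ would itself be driven by $W$ and the elementary comparison above would break down. Once the cancellation is in hand, the remaining steps amount to combining a deterministic a priori estimate on $h^x$ with Birkhoff's theorem for the ergodic stationary process $g$.
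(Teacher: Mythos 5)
Your proposal is correct and follows essentially the same route as the paper: the same synchronous coupling through the common Brownian motion $W$, the same cancellation of the noise in $d\lvert h^x - g\rvert^2$, the same use of the identity from Proposition \ref{prop:comp_dans_B} to control $\int_0^t \mathrm{Re}[h^x(s)]\,ds$ (the paper writes this as the bound $(1-\lvert h^x(t)\rvert^2)/(1-\lvert x\rvert^2) \le 1/(1-\lvert x\rvert^2)$, which is the same estimate you state as a uniform lower bound on the integral), and the same appeal to ergodicity of $\mu_{K,\varepsilon}$ together with Proposition \ref{prop:esp_re} to show $\int_0^t \mathrm{Re}[g(s)]\,ds \to +\infty$ almost surely.
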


\begin{proof}
Set $D_t = h^x(t) - g(t)$. We see that $D_t$ satisfies 
$$ dD_t=-KD_t(h^x(t) + g(t)) dt + \sqrt{2} i \varepsilon D_t \circ dW.$$
Hence, we apply It\^{o} formula to obtain  
\begin{align*}
d\lvert D_t \rvert^2 
%&= 2\mathrm{Re}[\compconj{D_t}dD_t] \\
%&= 2\mathrm{Re}[\compconj{D_t}(-KD_t(h(t) + g_t) dt + \sqrt{2} i \varepsilon D_t \circ dW)] \\
&= -2K \cdot \mathrm{Re}[h^x(t) + g(t)] 
\cdot \lvert D_t\rvert^2 dt.
\end{align*}
Similarly as in Proposition \ref{prop:comp_dans_B}, we have 
$$
\lvert D_t \rvert^2 = \lvert D_0 \rvert^2 e^{-2K \int_0^t \mathrm{Re}[h^x(s) + g(s)]ds},
$$
which leads to
$$
\lvert h^x(t) - g(t) \rvert^2 = \lvert h^x(0) - g(0) \rvert^2 e^{-2K\int_0^t \mathrm{Re}[h^x(s) + g(s)] ds}. 
$$
The ergodicity of $\mu_{K, \varepsilon}$ implies
$$
\lim_{t \rightarrow +\infty} \frac{1}{t} \int_0^t \mathrm{Re} [g(s)] ds = \E_{\mu_{K, \varepsilon}}[\mathrm{Re} \, \cdot] \text{ almost surely},
$$
and 
since $\E_{\mu_{K, \varepsilon}}[\mathrm{Re} \, \cdot] > 0$ 
by Proposition \ref{prop:esp_re}, we have
$$
\lim_{t \rightarrow +\infty} \int_0^t \mathrm{Re}[g(s)] ds  = \lim_{t \rightarrow +\infty} t \cdot \frac{1}{t} \int_0^t \mathrm{Re}[g(s)] ds = +\infty \text{ almost surely.}
$$
On the other hand, it follows from the proof of Proposition \ref{prop:comp_dans_B} that for $x \in B$,
$$
e^{-2K \int_0^t \mathrm{Re}[h^x(s)] ds} = \frac{1 - \lvert h^x(t) \rvert^2}{1 - \lvert x \rvert^2}
$$
so that 
\begin{align*}
\lvert h^x(t) - g(t) \rvert^2
&= \lvert h^x(0) -g(0) \rvert^2 e^{-2K \int_0^t 
\mathrm{Re}[h^x(s) + g(s)] ds } \\
&= \lvert h^x(0) -g(0) \rvert^2 
\frac{1 - \lvert h^x(t) \rvert^2}{1 - \lvert x \rvert^2} 
e^{-2K \int_0^t \mathrm{Re}[g(s)] ds } \\
&\leq \frac{\lvert h^x(0) -g(0) \rvert^2}{1 - \lvert x \rvert^2} e^{-2K \int_0^t \mathrm{Re}[g(s)] ds }
\end{align*}
almost surely. Finally, because $\lim_{t \rightarrow +\infty} \int_0^t \mathrm{Re}[g(s)] ds = +\infty$ almost surely, this implies that $$\lim_{t \rightarrow +\infty}\lvert g(t) - h^x(t) \rvert = 0$$ almost surely.
\end{proof}

 Here we remark that Theorem \ref{thm:conv_interieur} in the introduction follows from the fact that $g(t)$ in Proposition \ref{prop:conv_hg} satisfies $|g(t)|=1$ for all $t\ge 0$. Furthermore, Proposition \ref{prop:conv_hg} implies the convergence in law.
\vspace{3mm}

\begin{proof}[Proof of Corollary \ref{cor:conv_weak}]
Let $x \in B$ be arbitrary fixed.
Let $\varphi: \compconj{B} \rightarrow \R$ be a fixed bounded continuous function. Since $\compconj{B}$ is compact, the function $\varphi$ is uniformly continuous. Let $\omega : [0, +\infty] \rightarrow [0, +\infty]$ be the modulus of continuity of $\varphi$. Define the diffusions $(h^x(t))_{t \geq 0}$ and $(g(t))_{t \geq 0}$ as before 
with $h^x(0) \sim \delta_x$ 
and $g(0) \sim \mu_{K, \varepsilon}$ and compute
$$
\lvert \E_{\delta_x P_t^{B}}\left[ \varphi \right] - \E_{\mu_{K, \varepsilon}}\left[ \varphi \right]\rvert
= \lvert \E\left[ \varphi(h^x(t)) \right] - \E\left[ \varphi(g(t)) \right]\rvert \\
\leq \E\left[ \lvert \varphi(h^x(t)) - \varphi(g(t))\rvert \right] 
$$
Proposition \ref{prop:conv_hg} shows that
$$
\lvert h^x(t)-g(t) \rvert 
\xrightarrow{t \rightarrow +\infty} 0 \text{ almost surely},
$$
so
$$
0 \leq \lvert \varphi(h^x(t)) - \varphi(g(t))\rvert \leq \omega(\lvert h^x(t) - g(t)\rvert)
$$
implies
$$
\lvert \varphi(h^x(t)) - \varphi(g(t))\rvert \xrightarrow{t \rightarrow +\infty} 0 \text{ almost surely.}
$$
Furthermore, since $\lvert \varphi(h^x(t)) - \varphi(g(t)) 
\rvert \leq 2 \lVert \varphi \rVert_{L^\infty}$, 
the dominated convergence theorem yields
$$
\lim_{t \rightarrow +\infty} \E_{\delta_x P_t^{B}}[\varphi] = \E_{\mu_{K, \varepsilon}}[\varphi].
$$
Since $\varphi$ is arbitrary, 
this proves that $\delta_x P_t^{B}$ converges to $\mu_{K, \varepsilon}$ in distribution as $t \rightarrow +\infty$. 
\end{proof}

\vspace{0.4cm}

Taking the test function $\varphi(y)=\mathrm{Re}(y)$ for $y \in \bar{B}$ in the above proof,  we have 
for sufficiently large time $t>0$, 
$$
\E(\mathrm{Re} (h^x (t))) 
\ge
\frac{1}{2}\E(\mathrm{Re} (g(t)))
=
\frac{1}{2}\E_{\mu_{K, \varepsilon}}(\mathrm{Re} \, \cdot) >0,
$$
\vspace{3mm}
where the last positivity follows from 
Proposition \ref{prop:esp_re}, and 
this explains Remark \ref{rem:1} in the introduction. 
\vspace{3mm}

 Now we can prove that there exists no invariant probability measure on $B$.
\begin{theorem}
\label{invariant-probability-measure-on-b}
The Markov semigroup $(P_t^{B})_{t \geq 0}$ has no invariant probability measure.
\end{theorem}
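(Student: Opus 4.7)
The plan is to argue by contradiction using Theorem \ref{thm:conv_interieur}, which states that trajectories starting inside $B$ have modulus converging to $1$ almost surely. Intuitively, any hypothetical invariant probability measure on $B$ would have to concentrate near the boundary, but since trajectories actually leave every compact subset of $B$, it cannot place any mass there at all.

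More concretely, suppose for contradiction that $\mu$ is a probability measure on $B$ that is invariant with respect to $(P_t^B)_{t\geq 0}$. For each $\varepsilon \in (0,1)$, let $K_\varepsilon = \{z \in B : |z| \leq 1 - \varepsilon\}$ and consider the bounded Borel function $\varphi_\varepsilon = \mathbf{1}_{K_\varepsilon}$. By invariance, for every $t \geq 0$,
\begin{equation*}
\mu(K_\varepsilon) = \int_B \varphi_\varepsilon \, d\mu = \int_B P_t^B \varphi_\varepsilon(z) \, d\mu(z) = \int_B \mathbb{P}(|h^z(t)| \leq 1 - \varepsilon) \, d\mu(z).
\end{equation*}
By Theorem \ref{thm:conv_interieur}, for every $z \in B$ we have $|h^z(t)| \to 1$ almost surely, so $\mathbb{P}(|h^z(t)| \leq 1 - \varepsilon) \to 0$ as $t \to \infty$. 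Since the integrand is bounded by $1$, the dominated convergence theorem yields $\mu(K_\varepsilon) = 0$ for every $\varepsilon > 0$.

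Writing $B = \bigcup_{n \geq 1} K_{1/n}$, the countable union bound gives $\mu(B) = 0$, contradicting that $\mu$ is a probability measure on $B$. Hence no invariant probability measure exists on $B$.

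The main subtlety to be careful with is that Theorem \ref{thm:conv_interieur} asserts almost sure convergence starting from a deterministic point $x \in B$, and we need to combine these statements over all $z$ against $\mu$; this is handled cleanly by first using the Markov property to reduce to computing $\mathbb{P}(|h^z(t)| \leq 1-\varepsilon)$ pointwise, and only then integrating against $\mu$. Everything else is routine once that observation is in place.
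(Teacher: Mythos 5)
Your proof is correct and follows essentially the same route as the paper's: both use the invariance identity $\mu(\{|x|\le r\})=\int_B \prob(|h^x(t)|\le r)\,\mu(dx)$, let $t\to\infty$ via Theorem \ref{thm:conv_interieur} and dominated convergence to get $\mu(\{|x|\le r\})=0$ for each $r<1$, and then exhaust $B$ by the countable union $\bigcup_n\{|x|\le 1-\tfrac1n\}$ to reach the contradiction $\mu(B)=0$.
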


\begin{proof}
Suppose that an invariant probability measure $\mu_0$ exists. Let $0<r<1$ be fixed. 
By the definition of the invariance of $\mu_0$, we have 
$$ \int_B \prob(|h^x(t)| \le r) \mu_0(dx) = \mu_0 (|x| \le r)$$
and LHS converges to zero as $t \to \infty$ by Theorem \ref{thm:conv_interieur}. 
Because of this, 
$$
1 = \mu_0(B) = \mu_0\left( \bigcup_{n \geq 1} \acc{\lvert x \rvert 
\leq 1 - \frac{1}{n}} \right) 
\leq \sum_{n \geq 1} \mu_0
\left(\acc{\lvert x \rvert \leq 1 - \frac{1}{n}}\right) = 0.
$$
This contradiction concludes our proof.
\end{proof}

%Assume by contradiction that there exists such a invariant probability measure. 

%Then there exists at least one ergodic invariant probability measure $\mu^*$ on $B$. 
%Now Let $0 < r < 1$ be fixed. 
%Birkhoff's ergodic theorem implies that there exists a set $A \subset \acc{ \lvert h \rvert < 1}$ such that $\mu^*(A) = 1$ and the solution of (\ref{eq:h}) with 
%$h^x(0) \sim \delta_x$ verifies
%$$
%\lim_{t \rightarrow +\infty} \frac{1}{t} \int_0^t %\mathds{1}_{\{ \lvert h \rvert \leq r \}}(h(\theta)) %d\theta = \mu^*(\{ \lvert h \rvert \leq r \})
%$$
%almost surely for all $x \in A$. 

%On the other hand we have that $\lvert h^x(t) \rvert \xrightarrow{t \rightarrow +\infty} 1$ almost surely and thus
%$$
%\mu^*(\{ \lvert h \rvert \leq r \}) = 0.
%$$
%Question: S'il y a une mesure invariante, disons $\mu_0$, par la definition de %l'invariance, pour $t\ge $ et $x \in B$, 
%$$
%\int_{B \times \Omega} \mathds{1}_{|h^x(t,\omega)|\le r}(x,\omega)  d\prob %d\mu_0=\int_{B} \prob(|h^x(t)| \le r) \mu_0 (dx)= \mu_{0} (|x| \le r)
%$$
%Because of this, 
%$$
%1 = \mu^*(B) = \mu^* \left( \bigcup_{n \geq 1} \acc{\lvert h \rvert \leq 1 - \frac{1}{n}} \right) \leq \sum_{n \geq 1} \mu^*\left(\acc{\lvert h \rvert \leq 1 - \frac{1}{n}}\right) = 0.
%$$

%This contradiction concludes our proof.

\subsection{Convergence with initial condition in $\partial B$}

In this subsection, we restrict the state space to $\partial B$ and we investigate the asymptotic behavior of $\delta_x P_t^{\partial B}$ for fixed $x \in \partial B$. Since the state space $\partial B$ is compact, we have a uniform ``minorisation'' condition (\cite{m}) reminiscent of Doeblin's condition, which implies an exponential convergence to the invariant measure. For the sake of completeness we give a proof of the convergence, following \cite{m, msh}.

\begin{lemma} \label{lem:minorization}
There exists $\eta > 0$ and a probability measure $\nu$ such that
$$
\delta_x P_1^{\partial B}(F) \ge \eta \nu(F)
$$
for all measurable sets $F$ on $\partial B$ and all $x \in \partial B$.
\end{lemma}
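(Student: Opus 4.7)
The plan is to establish this as a standard consequence of ellipticity (which gives a smooth positive transition density) together with compactness of $\partial B$ (which turns pointwise positivity into a uniform lower bound).

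First I would recall that the equation on $\partial B$ has generator
$$
L = -2K \sin\theta \,\partial_\theta + \varepsilon^2 \,\partial_\theta^2
$$
which is uniformly elliptic on the compact manifold $S^1 \cong \partial B$. Standard parabolic regularity (or the Hörmander-type arguments already invoked in Theorem \ref{invariant-probability-measure-on-partial-b}) therefore produces a transition kernel at time $t=1$ of the form
$$
\delta_x P_1^{\partial B}(dy) = p_1(x,y)\, d\theta(y),
$$
where $d\theta$ is the Riemannian (i.e.\ arclength) measure on $\partial B$ and $p_1$ is jointly smooth on $\partial B \times \partial B$. This is exactly the strong Feller / smooth-density statement already used in Theorem \ref{invariant-probability-measure-on-partial-b}.

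Next I would upgrade the positivity of $p_1$ to a uniform positive lower bound. By the Stroock--Varadhan support theorem invoked in the proof of Theorem \ref{invariant-probability-measure-on-partial-b}, every point of $\partial B$ is accessible in any positive time, which forces $p_1(x,y) > 0$ for every $(x,y) \in \partial B \times \partial B$. Since $p_1$ is continuous and $\partial B \times \partial B$ is compact,
$$
c \defeq \min_{(x,y) \in \partial B \times \partial B} p_1(x,y) > 0.
$$

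Finally I would define the probability measure $\nu$ on $\partial B$ as the normalized arclength measure, $\nu(F) = \frac{1}{2\pi} \int_F d\theta$, and set $\eta = 2\pi c$. Then, for every $x \in \partial B$ and every Borel set $F \subset \partial B$,
$$
\delta_x P_1^{\partial B}(F) = \int_F p_1(x,y)\, d\theta(y) \ge c \int_F d\theta(y) = 2\pi c \cdot \nu(F) = \eta\, \nu(F),
$$
which is the desired minorization. I do not expect any serious obstacle: the only nontrivial ingredient is the existence of a smooth, strictly positive transition density, and both pieces are already in hand from the ellipticity of $L$ and the support theorem used earlier in the paper.
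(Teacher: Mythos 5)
Your overall strategy (continuous transition density plus compactness of $\partial B$) is the right one, and your conclusion is in fact stronger than what the paper proves: you minorize by the full normalized arclength measure, whereas the paper only produces a minorizing measure supported on a small arc. However, there is one genuine gap: the claim that the Stroock--Varadhan support theorem ``forces $p_1(x,y)>0$ for every $(x,y)$.'' The support theorem gives $\delta_x P_1^{\partial B}(U)>0$ for every nonempty open $U$, i.e.\ $\int_U p_1(x,y)\,d\lambda(y)>0$; combined with continuity this only says that $p_1(x,\cdot)$ is positive on a dense open set, not that it is positive at every point. A continuous nonnegative density can integrate positively over every open set while still vanishing at isolated points, so pointwise strict positivity does not follow from the ingredients you cite. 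Everywhere-positivity of the heat kernel of a uniformly elliptic operator on a compact connected manifold is true, but it needs a different tool (parabolic Harnack inequality or the strong maximum principle), or a composition argument.

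The paper's proof is designed precisely to avoid this issue. It works with the half-time kernel $p_{1/2}$, extracts a single point $z^*$ with $p_{1/2}(1,z^*)>0$ (which follows merely from $\int p_{1/2}(1,\cdot)\,d\lambda=1$), uses continuity to get a uniform lower bound for $p_{1/2}(y,z)$ on $B_\varepsilon(1)\times B_\varepsilon(z^*)$, and then composes via Chapman--Kolmogorov: $\delta_x P_1^{\partial B}(F)\ge \int_{B_\varepsilon(1)}\delta_y P_{1/2}^{\partial B}(F)\,p_{1/2}(x,y)\,d\lambda(y)$, where the support theorem is used only to guarantee $\inf_{x\in\partial B}\delta_x P_{1/2}^{\partial B}(B_\varepsilon(1))>0$ --- a statement about measures of open sets, which is exactly what that theorem delivers. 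The resulting $\nu$ is normalized arclength on $B_\varepsilon(z^*)$, which is all the minorization condition requires. To repair your version, either invoke the parabolic Harnack inequality to get $\min_{x,y}p_1(x,y)>0$, or replace the pointwise-positivity step by this two-step Chapman--Kolmogorov composition.
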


%The following two definition and the following theorem %are taken from \cite{hairer09}.

%\begin{definition} A \textbf{Lyapunov function} for a %Markov semigroup $\{ P_t \}_{t \geq 0}$ over a Polish %space $X$ is a function $V : X \rightarrow [0, +\infty]$ %such that $V$ is integrable with respect to $\delta_x %P_t$ for every $x \in X$ and every $t \geq 0$ and such %that there exist constants $C_V, \gamma, K_V  > 0$ such %that the bound
%$$
%\int V(y) d(\delta_x P_t)(y) \leq C_V e^{- \gamma t} %V(x) + K_V
%$$
%holds for every $x \in X$ and $t \geq 0$.
%\end{definition}

%\begin{definition}
%Let $\{ P_t \}_{t \geq 0}$ be a Markov semigroup over a %Polish space $X$. We say that a set $A$ is %$\textbf{small}$ if there exists a time $t > 0$ and a %constant $\varepsilon > 0$ such that
%$$
%\lVert \delta_x P_t - \delta_y P_t \rVert_{\text{TV}} %\leq 2(1 - \varepsilon).
%$$
%\end{definition}

%\begin{theorem}{Harris}
%Let $\{ P_t \}_{t \geq 0}$ be a Markov semigroup over a %Polish space $X$ such that there exists a Lyapunov %function $V$ with the additional property that the level %sets $\{ x \in X : V(x) \leq C\}$ are small for every $C %> 0$. Then $\{ P_t \}_{t \geq 0}$ has a unique invariant %measure $\mu^*$ and $$\lVert \delta_x P_t - \mu^* %\rVert_\text{TV} \leq C e^{- \gamma_* t} (1 + V(x))$$ %for some positive constants $C$ and $\gamma_*$.
%\end{theorem}
Recall the two following properties. 

\begin{proposition}
We have $\text{supp} \, \delta_x P_t^{\partial B} = \partial B$ for every $x \in \partial B$ and $t >0$ so in particular
$$
\delta_x P_t^{\partial B}(U) > 0
$$
for every $x \in \partial B$, $t>0$ and every open set $U$.
\end{proposition}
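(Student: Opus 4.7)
The plan is to invoke the Stroock--Varadhan support theorem for the diffusion on $\partial B$, exactly as was done earlier in the proof of Theorem \ref{invariant-probability-measure-on-partial-b} to establish accessibility. The strategy is to show that the associated deterministic control system is completely controllable in any positive time, so that every point of $\partial B$ lies in $\mathrm{supp}\,\delta_x P_t^{\partial B}$; since that support is closed, it must then equal all of $\partial B$, and the strict positivity on open sets is an immediate consequence of the definition of support.

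First I would pass to the angular coordinate $h = e^{i\theta}$ on $\partial B$. From the generator computed in the proof of Proposition \ref{prop:densite}, one reads off the It\^{o} SDE
$$
d\theta = -2K\sin\theta\, dt + \sqrt{2}\,\varepsilon\, dW,
$$
and because the diffusion coefficient is a nonzero constant (independent of $\theta$), the It\^{o} and Stratonovich forms coincide. The Stroock--Varadhan support theorem then identifies $\mathrm{supp}\,\delta_x P_t^{\partial B}$ with the closure in $\partial B$ of the set of endpoints $\gamma_u(t)$ produced by the controlled ODE
$$
\dot\gamma_u(s) = -2K\sin\gamma_u(s) + \sqrt{2}\,\varepsilon\, u(s), \qquad \gamma_u(0) = \theta_0,
$$
as $u$ ranges over smooth (or $L^2$) controls.

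Controllability is then immediate: given any target angle $\theta_1$ and any time $t > 0$, I pick a smooth path $\gamma : [0, t] \to \mathbb{R}$ with $\gamma(0) = \theta_0$ and $\gamma(t) \equiv \theta_1 \pmod{2\pi}$, and invert the ODE by setting
$$
u(s) = \frac{1}{\sqrt{2}\,\varepsilon}\bigl(\dot\gamma(s) + 2K\sin\gamma(s)\bigr),
$$
so that the controlled trajectory coincides with $\gamma$ and lands on $\theta_1$ at time $t$. Hence every point of $\partial B$ is reachable, which forces $\mathrm{supp}\,\delta_x P_t^{\partial B} = \partial B$. There is no genuine obstacle: the only things that deserve attention are the passage from the complex-valued SDE (\ref{eq:h}) on $\partial B$ to its intrinsic scalar form, and the observation that a constant noise coefficient makes It\^{o} and Stratonovich agree. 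Both are one-line remarks, after which the support theorem together with trivial scalar controllability does the rest.
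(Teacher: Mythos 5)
Your proof is correct and follows essentially the same route as the paper, which likewise deduces the result from the Stroock--Varadhan support theorem applied to the non-degenerate diffusion on $\partial B$. The only difference is that you spell out the controllability argument in the angular coordinate (which the paper leaves to the citation of Arnold--Kliemann), and that verification is accurate: the reduction to $d\theta=-2K\sin\theta\,dt+\sqrt{2}\varepsilon\,dW$ matches the generator computed in the proof of Proposition \ref{prop:densite}, and exact reachability of any angle in any positive time follows from inverting the control as you do.
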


\begin{proof}
The diffusion process corresponding to (\ref{eq:h}) on $\partial B$ is a non-degenerate diffusion with smooth coefficients on a smooth connected manifold. 
Thus the desired result follows immediately from the Stroock-Varadhan support theorem (see page 44 of \cite{ak}). 
\end{proof}

\begin{proposition}
The semigroup $P_t^{\partial B}$ admits a continuous density with respect the Riemannian measure $\lambda$ on $\partial B$, namely for all $t > 0$
$$
\delta_x P_t^{\partial B} (F) = \int_F p_t(x, y) d\lambda(y)
$$
for all measurable sets $F$ on $\partial B$. 
Furthermore $(x, y) \mapsto p_t(x, y)$ is continuous for fixed $t > 0$.
\end{proposition}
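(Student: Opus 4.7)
The plan is to reduce the problem to a classical result on parabolic PDEs on a compact manifold. In polar coordinates on $\partial B \cong S^1$, the diffusion associated with (\ref{eq:h}) has generator
$$
L f(\theta) = -2K\sin\theta\, \partial_\theta f(\theta) + \varepsilon^2 \partial_\theta^2 f(\theta),
$$
as was already computed in the proof of Proposition \ref{prop:densite}. The coefficients are smooth and $2\pi$-periodic, and the diffusion coefficient $\varepsilon^2 > 0$ is constant, so $L$ is a uniformly elliptic second-order operator with smooth coefficients on the compact manifold $S^1$.

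First I would invoke the standard existence theory for the fundamental solution of the parabolic equation $\partial_t u = Lu$ on $S^1$. Because $L$ is uniformly elliptic with smooth coefficients on a compact manifold, classical results (e.g. Friedman's parametrix construction, or the general theory of heat kernels associated with elliptic operators on manifolds) provide a fundamental solution $p_t(x,y)$ which is jointly smooth in $(t,x,y)$ for $t > 0$. Alternatively, since $L$ is hypoelliptic (in fact elliptic), H\"ormander's theorem combined with the Kolmogorov forward equation gives that the transition probability has a density which is smooth in the spatial variables.

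Next I would identify this analytic fundamental solution with the transition probability of the Markov semigroup. By It\^o's formula and the probabilistic representation, for $f \in C^\infty(\partial B)$ the function $(t,x) \mapsto P_t^{\partial B} f(x) = \mathbb{E}[f(h^x(t))]$ solves $\partial_t u = Lu$ with initial data $f$. By uniqueness of solutions of the parabolic Cauchy problem on $S^1$, we must have
$$
P_t^{\partial B} f(x) = \int_{\partial B} f(y) p_t(x,y)\, d\lambda(y),
$$
and extending from $C^\infty$ to bounded measurable $f$ by density and a monotone class argument yields the desired representation $\delta_x P_t^{\partial B}(F) = \int_F p_t(x,y)\, d\lambda(y)$.

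The continuity of $(x,y) \mapsto p_t(x,y)$ for fixed $t > 0$ follows immediately from the smoothness of the fundamental solution, so this is not a genuine obstacle. The only delicate point, in my view, is being careful about which classical reference to cite for the smooth heat kernel on $S^1$; the proof itself is essentially a quotation of standard parabolic regularity theory, since the geometry (compact, one-dimensional) and the operator (constant, strictly positive diffusion coefficient; smooth drift) are as benign as possible.
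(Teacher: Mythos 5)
Your proposal is correct and follows essentially the same route as the paper, whose proof is a one-line appeal to the non-degeneracy of the diffusion with smooth coefficients on a smooth compact manifold; you have simply spelled out the standard parabolic/heat-kernel argument that this appeal implicitly invokes. No gaps.
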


\begin{proof}
This follows again from the non degeneracy of our diffusion with smooth coefficients on a smooth connected manifold.
\end{proof}

We are now in a position to prove Lemma \ref{lem:minorization}, 
our proof follows the paper \cite{msh}. 

\begin{proof}[Proof of lemma \ref{lem:minorization}]
Since $\int_{\partial B} p_{\frac12}(1, z) d\lambda(z) = 1$ there exists $z^* \in \partial B$ such that
$$
p_{\frac12}(1, z^*) > 0.
$$
Because of the continuity of $p_{\frac12}$ there exist $\varepsilon > 0$ such that if $y \in B_{\varepsilon}(1)$ and $z \in B_{\varepsilon}(z^*)$ we have
$$
p_{\frac12}(y, z) > \frac{1}{2} p_{\frac12}(1, z^*).
$$
Therefore for any $y \in B_{\varepsilon}(1)$ we have
$$
\delta_y P_{\frac12}^{\partial B}(F) = \int_{F} p_{\frac12}(y, z) d\lambda(z) \geq \int_{F \cap B_{\varepsilon}(z^*)} p_{\frac12}(y, z) d\lambda(z) \geq \frac{1}{2} p_{\frac12}(1, z^*) \lambda(F \cap B_{\varepsilon}(z^*)).
$$
Furthermore, the continuity of $p_{\frac12}$ implies the continuity of $x \mapsto \delta_x P_{\frac12}^{\partial B}(F)$ for fixed measurable sets $F$ by the dominated convergence theorem. 

Since $\partial B$ is compact, this implies that $x \mapsto \delta_x P_{\frac12}^{\partial B}(F)$ reaches its infimum on $\partial B$ and hence
$$
\inf_{x \in \partial B} \delta_x P_{\frac12}^{\partial B}(B_\varepsilon(1)) > 0.
$$
Then $\delta_x P_1 (F)$ is estimated from below as follows. 
\begin{align*}
\delta_x P_1(F) &= \int_{\partial B} \delta_y P_{\frac12}^{\partial B}(F) p_{\frac12}(x, y) d\lambda(y) \\
&\ge \int_{B_{\varepsilon}(1)} \delta_y P_{\frac12}^{\partial B}(F) p_{\frac12}(x, y) d\lambda(y) \\
&\ge \int_{B_{\varepsilon}(1)} \frac{1}{2} p_{\frac12}(1, z^*) \lambda(F \cap B_{\varepsilon}(z^*)) p_{\frac12}(x, y) d\lambda(y) \\
&= \frac{1}{2} p_{\frac12}(1, z^*) \lambda(F \cap B_{\varepsilon}(z^*)) \delta_x P_{\frac12}^{\partial B}(B_{\varepsilon}(1)) \\
&\ge \frac{1}{2} p_{\frac12}(1, z^*) \lambda(F \cap B_{\varepsilon}(z^*)) (\inf_{x \in \partial B} \delta_x P_{\frac12}^{\partial B}(B_{\varepsilon}(1))) \\
&= \eta \nu(F),
\end{align*}
where $\eta = \frac{1}{2} p_1(1, z^*) (\inf_{x \in \partial B} \delta_x P_{\frac12}^{\partial B}(B_{\varepsilon}(1))) \lambda(B_{\varepsilon}(z^*))$ and $$\nu(F) = \frac{\lambda(F \cap B_{\varepsilon}(z^*))}{\lambda(B_{\varepsilon}(z^*))}.$$
\end{proof}

Once the minorization property is established, we directly have a contraction property of the law of solution. 

Let us recall that given two probability measures $\mu_1$ and $\mu_2$ on $\partial B$, their total variation distance is 
\begin{eqnarray*}
\|\mu_1- \mu_2\|_{\text{TV}} &=& 
\sup \{|(\mu_1-\mu_2)(\Gamma)|:~\Gamma \in \mathcal{B}(\partial B)\} \\
&=& \sup \left\{ \left|\int_{\partial B} \varphi d\mu_1 - \int_{\partial B} \varphi d \mu_2 \right|:~ \varphi \in B_{b}(\partial B, \R), ~ \|\varphi\|_{L^{\infty}(\partial B)} \le 1 \right\}.    
\end{eqnarray*}
\vspace{3mm}

\begin{lemma} \label{lem:contraction}
There exists $\alpha > 0$ such that for all $x, y \in \partial B$ we have
$$
\|\delta_x P_1^{\partial B} - \delta_y P_1^{\partial B}\|_{\text{TV}} 
\leq 1 - \alpha.
$$
\end{lemma}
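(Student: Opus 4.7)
The plan is to derive the contraction directly from the minorization condition of Lemma \ref{lem:minorization} via the classical Doeblin coupling decomposition. Let $\eta > 0$ and $\nu$ be the probability measure provided by Lemma \ref{lem:minorization}, so that $\delta_x P_1^{\partial B}(F) \geq \eta \nu(F)$ for every $x \in \partial B$ and every Borel set $F \subset \partial B$.

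First, I would observe that for each $x \in \partial B$ the signed measure $\delta_x P_1^{\partial B} - \eta \nu$ is actually nonnegative by the minorization, and has total mass $1 - \eta$. Hence
$$
\mu_x \defeq \frac{1}{1 - \eta}\bigl(\delta_x P_1^{\partial B} - \eta \nu\bigr)
$$
is a well-defined probability measure on $\partial B$. The key algebraic identity is then the common component cancellation
$$
\delta_x P_1^{\partial B} - \delta_y P_1^{\partial B} = (1 - \eta)(\mu_x - \mu_y),
$$
which holds for all $x, y \in \partial B$.

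Second, taking total variation norms and using the definition recalled in the excerpt, one has $\|\mu_x - \mu_y\|_{\mathrm{TV}} \leq 1$ because for any Borel set $\Gamma$, $|(\mu_x - \mu_y)(\Gamma)| \leq \max(\mu_x(\Gamma), \mu_y(\Gamma)) \leq 1$ since $\mu_x$ and $\mu_y$ are probability measures. Combining this with the identity above yields
$$
\|\delta_x P_1^{\partial B} - \delta_y P_1^{\partial B}\|_{\mathrm{TV}} \leq (1-\eta)\|\mu_x - \mu_y\|_{\mathrm{TV}} \leq 1 - \eta,
$$
and the lemma follows with $\alpha \defeq \eta > 0$, which is independent of $x, y \in \partial B$.

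There is no real obstacle here: all the analytic work (existence of $\eta$ and $\nu$ uniform in $x$) has been carried out in Lemma \ref{lem:minorization}; the present statement is a routine algebraic rewriting of the minorization as a contraction in total variation, often referred to as the first step in the Doeblin/Harris coupling argument leading to Proposition \ref{prop:conv_bord}.
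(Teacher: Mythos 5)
Your proof is correct, and it takes a different (and in fact more standard) route than the paper. You use the Doeblin decomposition: subtract the common component $\eta\nu$ guaranteed by Lemma \ref{lem:minorization}, normalize the nonnegative remainders into probability measures $\mu_x$, and read off the contraction from the identity $\delta_x P_1^{\partial B} - \delta_y P_1^{\partial B} = (1-\eta)(\mu_x - \mu_y)$ together with the trivial bound $\|\mu_x-\mu_y\|_{\mathrm{TV}}\le 1$. The paper instead argues by a direct case analysis on the size of $\nu(F)$: if $\nu(F)\ge \tfrac14$ then $\delta_x P_1^{\partial B}(F)\ge \tfrac{\eta}{4}$ uniformly in $x$, while if $\nu(F)<\tfrac14$ a short contradiction argument using the minorization applied to $F^c$ shows $\delta_x P_1^{\partial B}(F)\le 1-\tfrac{\eta}{4}$ uniformly in $x$; either way $|\delta_x P_1^{\partial B}(F)-\delta_y P_1^{\partial B}(F)|\le 1-\tfrac{\eta}{4}$. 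Both arguments rest solely on Lemma \ref{lem:minorization} and on the set-based definition of the total variation distance (the one the paper actually uses in its proof), but yours yields the sharper constant $\alpha=\eta$ versus the paper's $\alpha=\eta/4$, and it generalizes verbatim to any Markov kernel satisfying a minorization. The only cosmetic point is the degenerate case $\eta=1$, where your normalization $\tfrac{1}{1-\eta}$ is undefined; there $\delta_x P_1^{\partial B}=\nu$ for every $x$ and the claim is trivial, or one may simply replace $\eta$ by $\eta/2$ at the outset.
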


\begin{proof}
By the definition of the total variance distance,  
$$\lVert \delta_x P_1^{\partial B} - \delta_y P_1^{\partial B} \rVert_{\text{TV}} = \sup_{F \in \mathcal{B}(\partial B)} \lvert \delta_x P_1^{\partial B} (F) - \delta_y P_1^{\partial B}(F)\rvert.$$
So to obtain the desired result it is enough to show that for all $x, y \in \partial B$ and all $F \in \mathcal{B}(\partial B)$ we have
$$
\lvert \delta_x P_1^{\partial B} (F) - \delta_y P_1^{\partial B}(F) \rvert \leq 1 - \alpha
$$
for a certain $\alpha > 0$. 

Take the probability measure $\nu$ and $\eta>0$ in Lemma \ref{lem:minorization}.  
When $\nu(F) \geq \frac{1}{4}$, we have 
$\frac{1}{4} \eta \leq \delta_x P_1^{\partial B}(F) \leq 1$ for all $x \in \partial B$ and hence
$$
\lvert \delta_x P_1^{\partial B}(F) - \delta_y P_1^{\partial B}(F) \rvert \leq 1 - \frac{1}{4} \eta,
$$
for all $x, y \in \partial B$. 
Consider next the case $\nu(F) < \frac{1}{4}$. 
If we assume that there exists $x \in \partial B$ 
such that $\delta_x P_1^{\partial B} (F) > 1 - \frac{1}{4} \eta$, 
this would imply
\begin{align*}
1 &= \delta_x P_1^{\partial B}(F) + \delta_x P_1^{\partial B}(F^c) \\
&> 1 - \frac{1}{4} \eta + \eta \nu(F^c) \\
&= 1 - \frac{1}{4} \eta + \eta (1 - \nu(F)) \\
&= 1 + \eta (1 - \frac{1}{4} - \nu(F)) > 1,
\end{align*}
which is a contradiction, thus $\delta_x P_1^{\partial B} (F) \leq 1 - \frac{1}{4} \eta$ for all $x \in \partial B$. And therefore
$$
\lvert \delta_x P_1^{\partial B}(F) - \delta_y P_1^{\partial B}(F) \rvert \leq 1 - \frac{1}{4} \eta
$$
for all $x, y \in \partial B$.
\end{proof}

We then give a proof of Proposition \ref{prop:conv_bord}. 
\vspace{3mm}

\begin{proof}[Proof of Proposition \ref{prop:conv_bord}]
First, we show that Lemma \ref{lem:contraction} implies 
that for any $t\ge 1$ and $x,y \in \partial B$, 
$$
\|\delta_x P_t^{\partial B} - \delta_y P_t^{\partial B}\|_{\mathrm{TV}} \le 1-\alpha.
$$
Indeed, by the Markov property, for $\phi \in C_b(\partial B)$ with $\|\phi\|_{L^{\infty}(\partial B)} \le 1$ and $t \ge 1$, 
\begin{eqnarray*}
|P_t^{\partial B} \phi(x) - P_t^{\partial B} \phi(y)|
&=&|\E(\phi(h^x(t)))-\E(\phi(h^y(t)))| \\
&=&|\E(P_1^{\partial B}\phi(h^x(t-1))-\E(P_1^{\partial B}\phi(h^y(t-1))|\\
&=&\left|\int_{\partial B \times \partial B} 
[P_1^{\partial B} \phi(\tilde{x}) -P_1^{\partial B} \phi (\tilde{y})] 
\prob_{t-1}^{x,y}(d \tilde{x}, d\tilde{y})\right|,  
\end{eqnarray*}
where in the last equality we used  
the trivial product coupling
$$
\prob_{s}^{x,y}(A \times B) 
=\Big( \delta_x P^{\partial B}_s(A)\Big)\Big( \delta_y P_s^{\partial B}(B)\Big)
$$
with measurable sets $A,B$ in $\partial B$ and $s\ge 0$.
Since we know that $\delta_x P^{\partial B}_{t-1}(\partial B)= \delta_y P^{\partial B}_{t-1}(\partial B)=1$, by Lemma \ref{lem:contraction}, we obtain for $t \ge 1$, 
\begin{eqnarray*}
|P_t^{\partial B} \phi(x) - P_t^{\partial B} \phi(y)|
 \le (1-\alpha) \prob_{t-1}^{x,y}(\partial B \times \partial B)
=1-\alpha,
\end{eqnarray*}
which concludes the inequality above. 
Next, it can be seen that for any probability measures $\mu_1$ and $\mu_2$, 
and for any coupling $M$ on $\partial B \times \partial B$ of $\mu_1$ and $\mu_2$, 
we have for $t\ge 1$,
\begin{eqnarray*}
\|\mu_1 P_t^{\partial B} - \mu_2 P_t^{\partial B}\|_{\mathrm{TV}} 
&\le& \sup_{\|\phi\|_{L^{\infty}} \le 1} 
\int \left|P_t^{\partial B} \phi(x) - P_t^{\partial B} \phi (y)\right| M(dx,dy)\\ 
&\le& (1-\alpha) \left(1- M\{(x,x), x\in \partial B\}\right). 
\end{eqnarray*}
Note that 
$$\|\mu_1 - \mu_2 \|_{\mathrm{TV}}=
\inf \left\{1- M\{(x,x), x\in \partial B\}|~ M:\mbox{coupling of }~\mu_1~\mbox{and}
~\mu_2 \right\},$$
thus, for all $t\ge 1$,
$$  
\|\mu_1 P_t - \mu_2 P_t\|_{\mathrm{TV}} \le 
(1-\alpha) \|\mu_1-\mu_2\|_{\mathrm{TV}}.
$$
Finally, taking $\mu_1=\delta_x$, $\mu_2=\mu_{K, \varepsilon}$, we repeat recurrently this inequality to get 
$$
\|\delta_x P_t - \mu_{K, \varepsilon}\|_{\mathrm{TV}} \le 
(1-\alpha)^{[t]}\|\delta_x-\mu_{K, \varepsilon}\|_{\mathrm{TV}},
$$
for any $t\ge 1$.
\end{proof}
\vspace{3mm}

\begin{remark} Let $\mathcal{U}_1 \subset \partial B$ be a small neighborhood of the point $1$. Proposition \ref{prop:conv_bord} implies that $\lim_{t\to +\infty} \prob(h^x(t) \in \mathcal{U}_1) = \mu_{K,\varepsilon}(\mathcal{U}_1)$ for all $x \in \partial B$. On the other hand, we see that 
$\mu_{K,\varepsilon}(\mathcal{U}_1) >0$ from Proposition \ref{prop:densite}. 
Hence, it follows from  Proposition 3.4.5 of \cite{dpz} that for an arbitrary sequence $\{t_n\} \to +\infty,$ 
$$
\prob(h^x(t_n) \in \mathcal{U}_1, ~\mbox{for infinitely many}~ n \in \N)=1.
$$ 
\end{remark}
\vspace{3mm}

{\bf Acknowlegment}
This work was partly supported by JSPS KAKENHI Grant Numbers JP19KK0066,
JP20K03669.
The authors are grateful to have useful discussions with D. Kim. 
The authors would also like to express their  gratitude to K.~Funano, Y. Hariya, A. de Bouard and D. Chafa\"{i} for their encouragements and discussions.

\end{document}